\newtheorem{theorem}{Theorem}[section]
\newtheorem*{theoremA}{Theorem\,A}
\newtheorem*{corollaryA}{Corollary\,A}
\newtheorem*{corollaryB}{Corollary\,B}
\newtheorem*{theoremB}{Theorem\,B}
\newtheorem*{theoremC}{Theorem\,C}
\newtheorem*{corollaryC}{Corollary\,C}
\newtheorem{lemma}{Lemma}[section]
\newtheorem{prop}{Proposition}[section]
\theoremstyle{definition}
\newtheorem{defn}{Definition}[section]
\newtheorem{rem}{Remark}[section]
\newtheorem{eg}{Example}[section]
\def\e{{\rm e}}
\def\disp{{\displaystyle}}
\def\lro{{\lambda^{\rho}}}
\def\a{{ a}}
\def\:{{\colon\!}}
\def\loc{{\rm loc}}
\newcommand{\be}{\begin{equation}}
\newcommand{\ee}{\end{equation}}
\newcommand{\ba}{\begin{align}}
\newcommand{\bs}{\begin{align*}}
\newcommand{\ea}{\end{align}}
\newcommand{\es}{\end{align*}}
\newcommand{\bi}{\begin{itemize}}
\newcommand{\ei}{\end{itemize}}
\newcommand{\CN}{{\mathcal{N}}}
\newcommand{\K}{{\mathcal{K}}}
\newcommand{\CY}{{\mathcal{Y}}}
\newcommand{\pF}{p^{*}}
\newcommand{\T}{\Theta}
\newcommand{\tmn}{t^{-\frac{n}{2}}}
\newcommand{\tn}{t^{\frac{n}{2}}}
\newcommand{\A}{{\alpha}}
\newcommand{\eps}{{\varepsilon}}
\newcommand{\LT}{L^{\T}}
\newcommand{\LO}{L_0^{\Phi}}
\newcommand{\Mphi }{M^{\Phi}}
\newcommand{\Lphi}{L^{\Phi}}
\newcommand{\Linf}{L^{\infty}}
\newcommand{\Lpsi}{L^{\Psi}}
\newcommand{\vphi}{{\varphi}}
\newcommand{\Psinv}{{\Psi^{-1}}}
\newcommand{\Phinv}{{\Phi^{-1}}}
\newcommand{\Tinv}{{\T^{-1}}}
\newcommand{\G}{{\gamma}}
\newcommand{\lam}{{\lambda}}
\newcommand{\N}{\mathbb{N }} 
\newcommand{\R}{\mathbb{R}}
\newcommand{\It}{\int_0^t}
\newcommand{\Is}{\int_{\R^n }}
\newcommand{\F}{{\mathscr F}}
\newcommand{\U}{{\mathscr U}}
\newcommand{\C}{{\mathscr C}}
\newcommand{\Lap }{\Delta}
\newcommand\dee{\, {\rm d}}
\numberwithin{equation}{section}
\title[Nonlinear  Heat Equations]{Well-posedness of   Heat Equations with 
Nonlinearities of Arbitrarily Rapid Growth}
\author{Yohei Fujishima}
\address{Department of Mathematical and Systems Engineering \\ Faculty of Engineering \\ Shizuoka University \\  3-5-1\\  Johoku\\ Hamamatsu\\ 432-8561\\Japan}
\email{fujishima@shizuoka.ac.jp}
\author{Kotaro Hisa}
\address{Department of Applied Mathematics\\  Fukuoka University\\ 8-19-1\\Nanakuma\\Jonan-ku\\ Fukuoka City\\ 
            814-0180\\Japan}
\email{hisak@fukuoka-u.ac.jp}
\author[Robert Laister]{Robert Laister$^{\dagger}$}
\address{School of Computing  and Creative Technologies\\  University of the West of England\\ 
            Frenchay Campus\\ Coldharbour Lane\\ Bristol  BS16 1QY\\UK}
\email{Robert.Laister@uwe.ac.uk}
\thanks{$^{\dagger}$Corresponding author}
\begin{document}

%\begin{frontmatter}

%% Abstract

\begin{abstract}
%% Text of abstract
We address   local- and global-in-time  well-posedness of the Cauchy problem for nonlinear  heat equations without imposing  growth rate restrictions on the nonlinearity {\em a priori}. Our results constitute a non-trivial expansion of the  classical $L^q$-theory  for nonlinearities  dominated by  polynomial growth and the exponential-Orlicz space theory  for nonlinearities of exponential growth, to one dealing with nonlinearities of arbitrarily large growth rate.  A key ingredient is a new  smoothing estimate for the action of the heat semigroup between two arbitrary Orlicz spaces, and in particular into $\Linf$. For nonlinearities growing at least exponentially we are able to identify  explicitly a critical space for  local well-posedness and for small initial data global well-posedness.    
\end{abstract}
\maketitle

%%% \ MSC 35A01, 35A02, 35A09, 35B30, 35K58, 35K91, 35R15.  

%% or \MSC[2008] code \sep code (2000 is the default)

%%%\end{keyword}

\tableofcontents

\section{Introduction}
We consider the fundamental question of  local- and global-in-time well-posedness of the Cauchy problem for nonlinear  heat equations of the form
\begin{equation}\tag{NHE}\label{eq:she}
u_t=\Lap  u+f(u),\qquad u(0)=\vphi,
\end{equation}
where  $f\:\R\!\to\!\R$ is   locally Lipschitz  continuous   with $f(0)=0$ and  $\vphi$ is a (possibly unbounded) element of some Banach space $X$ of real-valued functions on $\R^n$. In particular we are especially  interested  in   nonlinearities $f$ which grow rapidly at infinity. 
By local well-posedness
we mean the existence, uniqueness and continuous dependence of  a solution $u\in C([0,T_{\vphi}),X)$  for every $\vphi\in X$. Under such circumstances one may then define a  local solution semiflow $\U$  on a suitable subdomain of $X\times [0,\infty)$.  Global-in-time well-posedness is defined similarly, but need only hold for $\vphi$ in a small open ball in $X$.  Such problems lie at the core of  modern theories of nonlinear evolution  equations and are intimately related to questions of  singularity formation and global continuation  of solutions, with many connections to  differential geometry,  functional analysis, probability theory and mathematical physics. 

The vast majority of  work  on the qualitative behaviour of nonlinear heat equations  has been carried out with nonlinearities dominated by polynomial growth, very often  with $f$ being a pure power law. Far  fewer papers have investigated  nonlinearities with stronger growth and 
those which do are usually for nonlinearities of exponential type. This is especially true of the literature on well-posedness. Indeed, to the best of our knowledge there exists no  explicit  local well-posedness theory for  nonlinearities   growing faster than $\exp (u^p)$, other than the trivial case of bounded initial data where $X=\Linf (\R^n)$.  
Our goal here is to go beyond previous considerations of polynomial and exponential growth  and establish   a well-posedness theory of  \eqref{eq:she} for a broad class of nonlinearities \emph{including those of arbitrarily large growth rate}.

The central problem is easy to state: given a nonlinearity $f$ as above, does there exist a Banach space $X$ (other than $\Linf (\R^n)$), such that \eqref{eq:she} is locally  well-posed in $X$? In such a case one can then ask  the deeper structural question,  
 one which pervades many contemporary theories of nonlinear partial differential equations:    is there a threshold (critical) choice, $X_c$, of  $X$ separating
 well-posedness from ill-posedness and if so,   in what way does $X_c$ depend on $f$?  One may then go on to consider whether \eqref{eq:she}  is also globally well-posed for  small initial data in $X$ or $X_c$. 

The central problem is less easy to solve, especially for nonlinearities of rapid growth.  Since the  work of \cite{W79}, 
satisfactory resolutions have only been obtained for two growth classes  of nonlinearity, namely those dominated by polynomial growth at infinity (\cite{BC,W80}) and those dominated by  $\exp (u^p)$  \cite{IRT1,MT2}.   One standard approach is to consider  the integral formulation  of  \eqref{eq:she}, namely
\be\label{eq:introVoC} u(t)=S(t)\vphi+\int_0^t S(t-s) f(u(s))\dee s\ee
where  $S(t)=\e^{t\Lap}$ is the heat semigroup,   and seek fixed points   $u\in C([0,T],X)$ of the nonlinear operator on the right hand side of \eqref{eq:introVoC} via the contraction mapping theorem.  
For singular, unbounded initial data it is known that the solvability of this problem depends critically, and often delicately, on the interaction between the  growth of $f(u)$ as $|u|\to\infty$ and the degree of smoothing of  $S(t)$ acting on $X$ or some related space. 

Previous  work (see e.g.,  \cite{BC,Gig86,IRT1,W79,W80}) has relied on stipulating either the space $X$ (such as Lebesgue space or exponential Orlicz space),   
a dominating  growth restriction  on  $f$ (such as polynomial or exponential) or the  decay rate of the heat semigroup $S(t)$.  
In \cite[Theorem 2]{W80} for example, the   assumption of a power law decay rate for $S(t)$ of the form $t^{-\A}$ led ultimately to the  restriction to polynomially dominated nonlinearities and spaces of power law integrability (Lebesgue spaces) in \cite[Theorem 1, Theorem 3 and  Theorem 4]{W80}. In \cite{IRT1,MT2} the growth of $f$ at infinity is assumed to be dominated by $\exp (u^p)$ ($p>1$), which led to a fixed point problem in a subspace of the exponential Orlicz space $\exp (L^p)$. In principle there should be no limitation to this approach:   for $f$ of a particular growth rate, guess a suitable  space $X$ and derive a smoothing estimate for $S(t)$ acting on  $X$ such that a contraction mapping results.  However, no such result exists in the literature for $f$ growing faster than exponentially (i.e., $\exp (u^p)$).  The problem of course lies in the reliability of choosing $X$ correctly and the difficulty in obtaining the desired  smoothing estimate. This  is   the main obstacle we overcome in this paper.

 We adopt a different approach and refrain from imposing any growth restrictions on either the space $X$, the nonlinearity $f$ or the smoothing decay rate of the heat semigroup $S(t)$.  Our choice of Banach space for this
purpose is very natural, namely the  family of Orlicz spaces $\Lphi(\R^n)$ parameterised by convex Young's functions $\Phi$. 
The central problem is then one of determining a suitable $\Phi$ for well-posedness of \eqref{eq:she} for a  given  $f$.
 In Theorem\,B we establish  a sufficient condition  for the local well-posedness of \eqref{eq:she}  in the form of  the convergence of an  integral involving $f$ and $\Phi$ (see \eqref{eq:introIC}). Likewise 
in Corollary\,B we provide an analogous condition for global well-posedness  (see \eqref{eq:IntroGlobalIC}).
The integral conditions  act like implicit  compatibility conditions between the nonlinearity $f$ and the Young's function $\Phi$. 
Thus, for a given $f$ one may check the integral condition to see whether a particular choice of $\Phi$  is suitable for the well-posedness of \eqref{eq:she};  likewise  for $f$, given $\Phi$.  With some additional  structural conditions imposed on $f$ (assumption {\bf (S)}, Section\,\ref{sec:thmC})  and a minimal rate of growth (assumption {\bf (G)}, Section\,\ref{sec:thmC}), we show  in Theorem\,C   how, given $f$,  one can choose $\Phi$ explicitly to obtain local well-posedness  of  \eqref{eq:she}. Moreover we identify a critical choice $\Phi_c$ of $\Phi$ separating local well-posedness from nonexistence of positive solutions and  show that $\Phi_c$   grows essentially  like $f$ at infinity. In Corollary\,C we provide an explicit  condition on $\Phi$ for global well-posedness of \eqref{eq:she} with small initial data. Thus we are able to address each aspect of the central problem.

 In the very  general set-up here with no specified growth rate  on either   $\Phi$ or $f$, the main technical obstacle we face and perhaps the one which has hindered progress in the field thus far, is in obtaining  an explicit and  sharp smoothing  estimate for the action of the heat semigroup between  arbitrary  Orlicz spaces.
 In Theorem\,A we obtain such an estimate and, in particular, in Corollary\,A we deduce the smoothing estimate  into  $L^\infty(\R^n)$. 
The $L^\infty$ smoothing estimate is crucial for obtaining  both Theorem\,B and Theorem\,C.  
In fact we will need to work in a particular subspace of $L^{\Phi}$, namely the Morse-Transue space $\Mphi(\R^n)$, in order that  $S(t)$ be a $C_0$-semigroup.
Such a space was introduced in \cite{RT1} and used by \cite{IRT1} to establish local well-posedness in the special case where $f$ and $\Phi$ grow like   $\exp (u^2)$. 

  In Theorem\,\ref{thm:J}  we provide a large canonical  family of  odd nonlinearities to which our results apply.  Of  particular importance is the fact that this family contains  nonlinearities  of  arbitrarily rapid  growth (Lemma\,\ref{lem:rapid}).

\subsection{Overview of the Main Results}\label{sec:IntroResults}
 Recall (see Definition~\ref{def:Orlicz}) that the  Orlicz space $\Lphi=\Lphi(\R^n)$ consists of Lebesgue measurable functions $\vphi$ for which $\Phi (k|\vphi|)$ is  integrable over $\R^n$ for {\em some} $k>0$, while the Morse-Transue subspace $\Mphi=\Mphi(\R^n)$ requires  integrability of $\Phi (k|\vphi|)$ for {\em all} $k>0$, where $\Phi\: [0,\infty]\!\to\! [0,\infty]$ is some  nondecreasing, convex Young's function with $\Phi(0)=0$ (see Definition~\ref{def:Young}). For $\vphi\in\Lphi$, the behaviour of $\Phi$ near zero imposes restrictions on the decay of  $\vphi$ at spatial infinity  while the growth of $\Phi$ at infinity  imposes restrictions on the strength of any local singularities of  $\vphi$.   Orlicz space is the natural choice of  Banach space  in which to generalise the $L^q$-theory of \cite{BC,W79,W80}, with
 $L^q$  corresponding to the special case $\Phi(u)\!=\!u^q/q$.

For expositional convenience  we  refer to any polynomially dominated nonlinearity  satisfying $|f(u)|\le C |u|^p$ for some $p>1$ and $u$ large enough, as  \emph{P-type} and any $f$ satisfying 
   $  |u|^p/ |f(u)|\to 0$ as $ |u|\to\infty$ for all $p>1$, as \emph{FP-type}   (faster than polynomial). 
  Currently the only general theory   of local well-posedness for \eqref{eq:she} in Banach spaces for a broad  class of nonlinearities is that of P-type \cite{BC,LRSV,LS20,QS19,W79,W80} and   FP-type  with growth dominated by  $\exp (u^p)$ \cite{IRT1,MT2}.  
  The corresponding  problem for global existence with small initial data has received more attention \cite{FKRT,MT1,MT2}.

In any  general framework, one  would like  $S(t)$ to be a $C_0$-semigroup on the underlying   space $X$ so that a  solution   $u\in C([0,T],X)$ might be obtained.
However,  for Orlicz spaces this is generally  not the case since the set of smooth compactly supported functions are  not usually  dense within them; a familiar exception being Lebesgue space $L^q$ for $q\in [1,\infty)$, a crucial point for problems of P-type. But as we  show in Theorem\,A, this {\em is} the case for the heat semigroup acting on the Morse-Transue space $\Mphi$, provided that $\Phi$ is an $N$-function (a `nice' Young's function satisfying $\Phi(u)/u\to 0$ as $u\to 0$ and  $\Phi(u)/u\to \infty$ as $u\to \infty$ -  see Definition\,\ref{def:Nfn}).  This is the main reason for working in $M^{\Phi}$  rather than  $L^{\Phi}$.

Our first main results take the following form (their precise statements are given in Section\,\ref{sec:thmA}).

\vspace{2mm}

\bi [leftmargin=*]
\item[] 
{\bf Theorem\,A and Corollary\,A.} \emph{For compatible Young's functions $\Phi$ and $\Psi$, $S(t)\: L^{\Phi}\!\to\!L^{\Psi}$ satisfies
%\be\label{eq:smoothing1}
\[  \left\|S(t)\vphi \right\|_{\Psi}\le  \C\left\|\vphi \right\|_{\Phi}  \frac{{\Phinv(t^{-\frac{n}{2}})}}{\Psinv(t^{-\frac{n}{2}})}, \qquad  t>0.
\]
% \ee
In particular, if $\Phi$ is an $N$-function  then  $S(t)$ is a $C_0$-semigroup on $M^{\Phi}$  and for all $t>0$
we have the $M^{\Phi}$-$L^{\infty}$ smoothing estimate
\[\left\|S(t)\vphi \right\|_{\infty}\le \C \left\|\vphi \right\|_{\Phi} \Phinv\left( t^{-\frac{n}{2}}\right),\qquad \vphi\in M^{\Phi}.\]
}
\ei
\vspace{2mm}

Knowing explicitly the dependence on $\Phi$ of the $\Linf$-smoothing estimate  enables us to determine a sufficient condition  on $\Phi$ and $f$
for  the  application of the contraction mapping theorem to the integral formulation \eqref{eq:introVoC}, 
yielding the existence of a classical solution.  This estimate  is also key to  obtaining unconditional uniqueness and continuous dependence of classical solutions, akin to those of the $L^q$ case for polynomially dominated nonlinearities in \cite{BC}, which we prove without further recourse to the contraction mapping theorem.
 Additionally, for monotonically increasing $f$ we establish  a comparison principle for the solution semiflow  which is of interest in its own right in the context of monotone dynamical systems theory.  With little extra effort,   we  then establish a sufficient condition for global-in-time well-posedness of  \eqref{eq:she} for small initial data in 
 $\Mphi$.  This latter topic has been of some interest in the literature concerned with exponential  nonlinearities, especially in its relation to nonlinear Schr\"odinger equations and the Trudinger-Moser embedding \cite{I1, MT1,MT2, NO,RT1}.  This appears to be  the source of  interest in nonlinearities and spaces with growth like $\exp(u^2)$.
 
We next outline our first main  well-posedness results,   their precise statements are given in Section\,\ref{sec:thmB}.

\vspace{2mm}
\bi [leftmargin=*]
\item[]
{\bf Theorem\,B and Corollary\,B.}  \emph{Let $\Phi$ be an  $N$-function. If there exists    $\lam >0$    such that
\be\label{eq:introIC}
\int_1^{\infty} x^{-\pF}\ell\left(  \lam    \Phinv\left(x   \right)\right)\dee x<\infty, 
\ee
where $\pF\!:=\!1+2/n$ is the Fujita exponent  and $\ell$ is the Lipschitz modulus of  $f$, then for every  $\vphi\in M^{\Phi}$ \eqref{eq:she} possesses a unique   classical solution $u$ 
depending continuously on $\vphi$.  For $f$  increasing  the solution  is order-preserving in $\vphi$,  so that a comparison principle holds.
Subject to the stronger condition
\be\label{eq:IntroGlobalIC}
\int_0^{\infty} x^{-\pF}\ell\left(  \lam    \Phinv\left(x   \right)\right)\dee x<\frac{n}{4},
 \ee
for every   $\vphi$ in a sufficiently small ball in $M^{\Phi}$ there exists  a  unique global-in-time     classical solution $u$
of \eqref{eq:she} satisfying 
$u(t)\to 0$ in $L^\infty$ as $t\to\infty$.}
\ei

\vspace{2mm}

 The power of Theorem\,B  lies in its flexibility and the fact  that no  particular structural properties or limitation on growth rates  of  $f$ or $\Phi$ are imposed \emph{a priori}.  Instead, \eqref{eq:introIC} represents a kind of `compatibility condition for existence' between the nonlinearity  $f$ and the underlying space  $\Mphi$ specified by $\Phi$.  It generalises the one obtained for P-type nonlinearities in \cite[Theorem\,4.4]{LRSV}
 in the special case $L^{\Phi} =L^1$.
 It also bears a formal resemblance to \cite[Eqn.\,(2.2)]{W80} (indeed there is a clear lineage to that work)  but there are two crucial qualitative distinctions. Firstly, the definition of $\ell$ in \cite{W80} is for the Nemytskii operator associated with $f$, acting on some unspecified function space for which one has no explicit representation. Secondly,  the smoothing estimate in \cite{W80} is assumed to be of power law type and therefore,   as illuminated by  Theorem\,A(b), effectively limits $\Phi$ to one of power law type (i.e., Lebesgue spaces) and consequently $f$ to be of  P-type.

We  use \eqref{eq:introIC}  in  Theorem\,B  to establish, for a given $f$, how one should choose $\Phi$ in order to obtain local well-posedness of \eqref{eq:she}.
To achieve this we  impose some  structural conditions on $f$ such as convexity for $u\ge 0$ and  regular or rapid variation of $\log f$ at infinity. We also assume that $f$ satisfies
 \be\label{eq:FIlim}
  \lim_{u\to\infty}{ f'(u)F(u)}=1,\qquad F(u):= \int_{u}^\infty {1}/{f(s)}\dee s
  \ee
(see Section\,\ref{sec:background}    for additional context regarding \eqref{eq:FIlim}). For the sake of brevity at this point we refer the reader to Section\,\ref{sec:thmC} for the full details of these structural conditions in  {\bf (S)} and  {\bf (G)}. Below we aim to summarise the essence of these assumptions and their consequences for \eqref{eq:she}.

Consider for the moment the  class of nonlinearities   
of the form  
\be\label{eq:expfintro}f(u)=\e^{u^\rho \mu (u)},\ee
where $\rho > 0$  is a constant and $\mu$ is a \emph{slowly varying} function (see \cite{BGT,Sen} and  the discussion around \eqref{eq:kara1}).  In the terminology of  {regularly varying functions},  if $f$ satisfies \eqref{eq:expfintro} then $\log f$ is said to be \emph{regularly varying of index} $\rho$.  
For $\rho\in (0,1)$  the  growth of $f$ is commonly referred to as `sub-exponential', $\rho =1$ as exponential \cite{PV} and $\rho >1$ as `super-exponential' (the nonlinearities in \cite{ I1, IRT1,MT1,MT2, NO,RT1} are all of this latter type with $\mu\equiv 1$).

In order to treat nonlinearities of arbitrarily large growth rate, we wish  to formally permit $\rho =\infty$.   
We do this rigorously by allowing $\log f$ to be a   \emph{rapidly varying} function (see \cite{Sen} and around \eqref{eq:rhoinf}), for which 
there is no  upper bound on the growth rate of $f$.  

With this in mind we outline our third main theorem.  We write $g\lesssim h$ whenever  $g(x)\le h(Kx)$ for   large $x$ and some constant $K>0$ and   $g\gtrsim h$ whenever $h\lesssim g$.   Likewise  `$g\lesssim h$ as $x\to 0$'  (resp. on $[0,\infty )$), etc. if the relations hold for small $x$ (resp. all $x\ge 0$).

\vspace{2mm}

\bi[leftmargin=*]
\item[]
{\bf Theorem\,C and Corollary\,C. }  \emph{Let $\Phi$ be an  $N$-function and    $f$  satisfy   {\bf (S)} and {\bf (G)}. 
\bi\item[(a)]  
If $\Phi\gtrsim f$ at infinity then \eqref{eq:she}  is locally well-posed in $M^{\Phi}$. If in addition   $\Phi\gtrsim  f^r$  near zero, for some $r\ge 1$ and $r>n/2$, then uniqueness of the classical solution holds in the  class  $C([0,T],M^{\Phi})$ of  mild solutions. Conversely, if $\Phi\lesssim f$  then there exist nonnegative initial data in $ L^{\Phi}$ for which no nonnegative solution of \eqref{eq:she}  exists. 
\item[(b)] Suppose  that  $f'$ is regularly varying at zero of index $m-1$  for some $m>\pF$. 
If  $\Phi\in\mathcal{N}$ satisfies $\Phi\gtrsim f^q$ near zero  for some $0<q<n(m-1)/2$ and $\Phi\gtrsim f$ at infinity,     then \eqref{eq:she}  is locally well-posed in $M^{\Phi}$ and  globally  well-posed   for small initial data in $\Mphi$.
\ei
}
\ei

\vspace{2mm}

Theorem\,C(a) resolves two parts of the central problem. Firstly, for a given $f$ growing at least sub-exponentially, but otherwise without dominating growth restrictions, we obtain
the explicit sufficient condition $\Phi\gtrsim f$ at infinity for local well-posedness. Secondly, the nonexistence result shows that this condition is sharp (critical) in the sense that even with 
$\Phi\approx f$ at infinity (where $g\approx h$ if and only if  $g\lesssim h$ and $h\lesssim g$), nonexistence of positive solutions can occur for $\vphi$ in the superspace $\Lphi$. In this sense any $\Phi$ satisfying $\Phi\approx f$
at infinity is a critical Young's function. In our terminology above this means that $X_c=M^f$.
This type of criticality  has been observed previously in  \cite{IRT1,IRT2} but only in the special case where $f$ has super-exponential growth with $\rho=2$. The significance of   Theorem\,C is that it holds without any upper growth restriction on $f$, i.e., $\rho=\infty$ is permitted.

Theorem\,C(b) addresses the global aspects  of the central problem by providing an additional  sufficient order relation near zero 
 for the global well-posedness of  \eqref{eq:she} with small initial data in the critical space $X_c=M^{f}$ (note that by Lemma\,\ref{lem:LHop}(iii) $f^k\approx f$ at infinity). 

 In Theorem\,\ref{thm:J}  we consider odd nonlinearities of the form $f(u)=u^mJ(u)$ (for $u\ge 0$) satisfying the conditions of Theorem\,C and Corollary\,C. Thus $u^m$ captures the behaviour of $f$ near zero while $J$ captures the behaviour of $f$ at infinity. Of  particular significance  this family contains  nonlinearities  of  arbitrarily rapid growth. Specifically, for any positive increasing convex function $r$ there is a corresponding  $J(u)\ge\exp(r(u))$ for which our well-posedness results apply (see Lemma\,\ref{lem:rapid}).  We also present additional examples of both FP-type and P-type and compare  with known results.

\subsection{Background}\label{sec:background}
For several decades   the most  influential work on the well-posedness question for  \eqref{eq:she}  has been the  $L^q$-theory of  \cite{W79,W80} 
who considered nonlinearities with  growth  dominated at large values by  a power law  function $|u|^p$ and \(X\!=\!L^q\).  There, for  $p>\pF$, the  condition  $q\ge q_c:=n(p-1)/2$ was shown to be sufficient for  
local well-posedness in $L^q$ (in a restricted sense regarding uniqueness, later removed in \cite{BC}). Moreover, for  $f$  {minorised}  by $u^p$, the condition  $q\ge q_c$ was also shown to be necessary for existence of positive solutions. In this sense the condition on $q$ was sharp, and  $L^{q_c}$ considered to be a   `critical space' (though this terminology  originated  in a different context related to norm-preservation in $L^{q}$ under a scaling invariance of \eqref{eq:she}  when $f(u)=|u|^{p-1}u$).  Viewing $L^q$ as an Orlicz space $L^{\Phi}$ with Young's function $\Phi(u)=u^q/q$, one may equivalently consider the function  $\Phi_c(u)=u^{q_c}$ to be a `critical Young's function'.

An extensive literature  has since developed on the well-posedness of  \eqref{eq:she} in Lebesgue spaces for nonlinearities  of P-type \cite{BC,LRSV,LS20,RS,W79,W80,W81}.  For an overview of its development and the state of the art,  \cite{QS19} is a comprehensive source. This problem has also been studied in Sobolev spaces  \cite{IJMS,Rib}, Besov spaces \cite{MZ} and a variety of other spaces \cite{Gig86, IIK,KY}. However, for nonlinearities of FP-type and framed in Banach spaces a similar corpus does not exist, with only special cases (i.e., exponential $f$) considered.

Coming some thirty-five years after  \cite{W80},   the first step-change in local well-posedness  theory
for \eqref{eq:she} dealing  successfully with nonlinearities of FP-type was the work of \cite{IRT1}.
There, important advances were made for  space dimension two $(n\!=\!2)$ with exponentially dominated nonlinearities of the form 
\[
|f(u)-f(v)|\le C|u-v| | \e^{\lam u^2}+\e^{\lam v^2}| 
\]
and $X=\Lphi=\exp L^2(\R^2)$, $\Phi (u)=\exp(u^2)-1$. With $\exp L_0^2(\R^2)$ denoting the closure of smooth, compactly supported functions in $\exp L^2(\R^2)$, they showed (\cite[Theorem~1.2]{IRT1}) that for all $\vphi\in \exp L_0^2(\R^2)$, there exists a unique solution $u\in C([0,T],\exp L_0^2(\R^2))$ of \eqref{eq:she}.  Continuous dependence on the initial data was not shown so the existence of a local semiflow
was not established. In  \cite[Theorem~1.1]{IRT1} it was shown that for 
$f$  minorised asymptotically by $\e^{\lam u^2}$ for some small $\lam >0$,  there exist nonnegative initial data $\vphi\in \exp L^2(\R^2)$ for which there is no nonnegative classical solution of \eqref{eq:she}. Thus $\Phi_c(u)\!=\!\e^{u^2}-1$ acted as a critical Young's function. We consider such cases in Section\,\ref{sec:exp} as an application of  Theorem\,C or Corollary\,C (with $\rho =2$). 

Following the template set down in \cite{IRT1}, in \cite{MT2} the authors obtained the analogous results in any dimension $n\ge 1$  for $f$ satisfying 
\[
|f(u)-f(v)|\le C|u-v| | \e^{\lam u^p}+\e^{\lam v^p}| ,\qquad p>1,
\]
obtaining a unique solution $u\in C([0,T],\exp L_0^p(\R^n))$.

The utilisation of the space $\exp L_0^p$  in \cite{IRT1,MT2}  (following its introduction in \cite{RT1}) is noteworthy, for then one has strong continuity of $S(t)$ in $\exp L_0^p$; i.e., $S(t)\vphi\!\to\!\vphi$ in $\exp L^p$ as $t\to 0$ for all $\vphi\in\exp L_0^p$. This enabled  both existence and uniqueness of   solutions  in the space $ C([0,T],\exp L_0^p)$  to be established. By contrast, in several other works (e.g., \cite{I1,MT1,RT1}) solutions  achieve the initial data only in a much  weaker sense, such as the weak$^*$-topology or $\|u(t)-S(t)\vphi\|_{\exp L^p}\to 0$ as $t\to 0$.

More subtly, and of particular relevance here, the proofs in \cite{IRT1,MT2} intrinsically made use of the fact that $\Phi(u)\!=\!\exp(u^p)-1$ is an $N$-function, so that  $\exp L_0^p$ coincided with the Morse-Transue space  $M^{\Phi}\!=\!\exp M^p$ of $\exp L^p$. This is a key observation when seeking to generalise the well-posedness theory to arbitrary Orlicz spaces, since by restricting initial data to 
  $M^{\Phi}$ one obtains both strong continuity of $S(t)$ (via $\exp L_0^p$) and invariance under $S(t)$ (via  $\exp M^p$) and thus generate  a $C_0$-semigroup in $\Mphi=\exp L_0^p=\exp M^p$.
  This is a key part of  Theorem\,A.

 Common to all the aforementioned works  in Orlicz spaces are the following two  ingredients: 
 \bi[leftmargin=1.5em]
 \item[1.] an upper bound is imposed {\em a priori} on the growth rate of the nonlinearity $f$;
 \item[2.] that  upper bound is precisely an {\em exponential} function.
 \ei
 These two features have been essential and ubiquitous in the development thus far in the Orlicz space theory for nonlinear heat equations. The reason  is that  all  smoothing estimates for  $S(t)$ between Orlicz or Lebesgue spaces relied in an essential way upon some  compatibility between the exponential   function $\Phi$ (itself induced by the exponential growth bound assumption on $f$ and some educated guesswork)   and the homogeneous power law functions $u^r$ associated with  Lebesgue space norms. This compatibility allows one to estimate the exponential-Orlicz norm of  $S(t)\vphi$ via a convergent  series of norms in Lebesgue spaces, each of which can then  be estimated by standard smoothing estimates between  Lebesgue spaces. A simplistic but illuminating  calculation with $\Phi(u)\!=\!\exp (u^p)-1$ proceeds  thus:
 \begin{align*}
\Is \Phi\left(\frac{|S(t)\vphi|}{k}\right)\dee y &= \Is \exp\left(\frac{|S(t)\vphi|^p}{k^p}\right)-1\dee y
= \sum_{r=1}^{\infty}\frac{\|S(t)\vphi \|_{pr}^{pr}}{r!k^{pr}}\\
 & \le \sum_{r=1}^{\infty}\frac{\|\vphi \|_{pr}^{pr}}{r!k^{pr}}
 =\Is \exp\left(\frac{|\vphi|^p}{k^p}\right)-1\dee y\\
 &=\Is \Phi\left(\frac{|\vphi|}{k}\right)\dee y,
\end{align*}
from which one deduces (see Definition\,\ref{def:Orlicz} of the Orlicz norm) that $S(t)\:\exp L^p\to \exp L^p$ is bounded, with
\[
\|S(t)\vphi\|_{\exp L^p}\le \|\vphi\|_{\exp L^p}, \qquad t>0.
\]
More delicate variations on this  theme lead to finer estimates between different scales of Lebesgue spaces $L^q$ and exponential-Orlicz spaces $\exp L^r$; see \cite{FKRT, I1, IRT1,IRT2,MT1,MT2} (Laplacian),  \cite{BMM,FK}  (fractional Laplacian) or \cite{MOT} (biharmonic). 

We speculate that  reliance on this method for calculating smoothing estimates of $S(t)$ is the main reason for limiting investigations to ones in exponential-Orlicz spaces and consequently to nonlinearities dominated by exponential growth.
By contrast,  in Theorem\,A and  Corollary\,A the smoothing estimate for the heat semigroup  is valid between two \emph{arbitrary} Orlicz spaces, without requiring any special exponential structure.

For  exponential nonlinearities, studies have  been concerned mainly with   global existence of solutions for small initial data. For  Schr\"odinger equations, the  problem  with $f(u)\!=\!\pm (\e^{\lam |u|^2}-1)$ was considered in \cite{NO}  in the Sobolev space $H^{n/2}(\R^n)$.  In the context of heat equations with initial data in  Orlicz spaces, problem \eqref{eq:she} seems to have been first studied  in   \cite{RT1} where the authors established local existence of weak solutions for small initial data and $f(u)\!=\!\Phi(u)\!=\!\e^{u^2}-1$. Subsequently it was shown in \cite{I1} (with $p\!=\!2$) and more recently in \cite{MT1,MT2} in the more general case $p>1$, that global-in-time weak solutions of \eqref{eq:she} exist  for small initial data   in  $\exp L^p$ for $f$ satisfying
\be
\label{eq:expLf}
|f(u)-f(v)|\le C|u-v| \left(|u|^{m-1} \e^{\lam |u|^p}+|v|^{m-1} \e^{\lam |v|^p}\right), 
\ee
provided $m\ge p>1$ and $m\ge 1+2p/n$. We consider such cases in Section\,\ref{sec:exp} as an application of  Corollary\,C and Theorem\,\ref{thm:J}. There we obtain stronger results due mainly to the flexibility we have in choosing the behaviours of $\Phi$ at zero and infinity independently, rather than being fixed by the single parpater $p$ as in previous studies. Again this advantage is due to the superior smoothing estimate afforded by Theorem\,A.

 Since \cite{IRT1,MT2},  there appear not to have been any  advances on the well-posedness theory of  \eqref{eq:she}  in Banach spaces for nonlinearities growing faster than $\exp(u^p)$. However,  some interesting results were obtained  in \cite{FI1}  for nonlinearities of  unrestricted growth  for \emph{sets} $X$ of initial data rather than initial data in a linear Banach space. As such a solution  
trajectory does not necessarily correspond to a continuous curve within a space of initial data (i.e.,  $\vphi\in X$ but $u(t)\not\in X$ in general) and do not generate a semiflow.   
The authors developed a  Hopf-Cole transformation method and considered two large classes of nonlinearities, namely those of P-type and those of FP-type. Via an asymptotic quasi-scaling of the underlying equation  they established a nonlinear correspondence between solutions of   \eqref{eq:she} and solutions of two canonical forms of  \eqref{eq:she}, namely with $f(u)=u^p$ and $f(u)=\e^u$, corresponding to P-type and FP-type. In particular, they established (among other things)  threshold conditions for the existence or nonexistence of positive solutions for nonlinearities of FP-type.  Morally, under subsidiary assumptions on $f$ which we do not detail here,  if $f$ satisfies \eqref{eq:FIlim}
  and $[F(\vphi)]^{-\frac{n}{2}}\in \mathcal{L}^1_{\text{uloc}}\left(\R^n\right)$,
  then  \eqref{eq:she} possesses (in some sense) a local positive classical solution. Here  $\mathcal{L}^1_{\text{uloc}}$ is the closure of the space of bounded uniformly continuous functions  in    ${L}^1_{\text{uloc}}$, with ${L}^1_{\text{uloc}}$ being the space of uniformly locally integrable functions
  on $\R^n$. Conversely, for any $r\in (0,n/2)$, there exists positive $\vphi$ satisfying $ [F(\vphi)]^{-r}\in {L}^1_{\text{uloc}}$ for which  \eqref{eq:she}  has no positive solution.  
The authors of   \cite{FI1}  do not consider questions of uniqueness or continuous dependence, sign-changing initial data or global solutions. It would appear  too, that the methods in  \cite{FI1} cannot be extended to obtain such results in linear spaces.

\section{Preliminaries}
%\label{sec:prelims} 

Throughout we adopt the following notational conventions. $B_r$ will denote the open ball of radius $r>0$ in $\R^n$ centered at the origin;  the characteristic function on $\Omega\subset\R^n$ is denoted $\chi_\Omega$ or simply  $\chi_r$ when $\Omega\!=\!B_r$.  We write `a.e.' for {\em almost every(where)} with respect to Lebesgue measure; the set of all Lebesgue measurable functions on $\R^n$ is denoted by ${\scr{L}}(\R^n)$.

 For any nonnegative functions $g$ and $h$ we write $g\lesssim h$ whenever there exist constants $K,x_0>0$ such that $g(x)\le h(Kx)$ for all  $x\ge x_0$; we  define  $g\gtrsim h$ if and only if $h\lesssim g$ and define $g\approx h$ if and only if  $g\lesssim h$ and $h\lesssim g$.  Likewise we say `$g\lesssim h$ as $x\to 0$' if $g(x)\le h(Kx)$ for all  $|x|\le x_0$, with  
`$g\gtrsim h$ as $x\to 0$' and `$g\approx h$ as $x\to 0$' defined in the obvious way.  When comparing Young's functions we  will also have cause to write `$g\lesssim h$ on $[0,\infty )$' for the relation $g(x)\le h(Kx)$ holding for all $x\ge 0$. Thus `$g\lesssim h$' will by default refer to the relation `at infinity', all others referring explicitly to their domain of validity. We say $g$ is {\em asymptotic} to $h$ and write $g\sim h$,  
if and only if $g(x)/h(x)\to 1$  as $x\to\infty$ (or `$g\sim h$ as $x\to 0$'  if $g(x)/h(x)\to 1$  as $x\to 0$), again the defualt being at infinity. 

We use $C$ as a generic constant which may change value within and between  lines. By `increasing' we will mean non-decreasing, and analogously for `decreasing'.
By default,  the dependence on the domain of function spaces defined over $\R^n$ will in general be suppressed, i.e.,  $L^q\!:=\!L^q (\R^n)$, $L^{\Phi}\!:=\!\Lphi (\R^n)$, $\LO\!:=\!\LO(\R^n)$,    $\disp{M^{\Phi}\!:=\!\Mphi(\R^n)}$ (see Definition\,\ref{def:Orlicz}). Norms in Lebesgue space $L^q$ will be written as $\|\cdot\|_q$ for $q\in [1,\infty]$.

\subsection{Orlicz  and Morse-Transue Spaces}
We   recall  some definitions and concepts from the theory of Orlicz spaces,  referring the reader  to the 
comprehensive sources \cite{AF,BS, KR,Lux,Mal,ON,RR1,RR2}. There exist several definitions of Orlicz spaces in the literature, chosen with particular contexts in mind, e.g., functional analysis, partial differential equations  or probability theory. It will prove fruitful when deriving our smoothing estimates to choose a weaker notion than those typically adopted in the  literature on nonlinear heat equations  \cite{FK,MT1,MT2,RT1}.
 
\begin{defn}[Young's function \cite{Mal,ON,RR1}]\label{def:Young}
We say that $\Phi \: [0,\infty ]\!\to\! [0,\infty ]$  is a (nontrivial) {\emph {Young's function}} if  $\Phi(0) \!=\! 0$ 
and either:
\bi[leftmargin=2.5em]
\item  [(I)] $\Phi$ is  convex, increasing, finite-valued, not identically zero on $[0, \infty )$, or
\item [(II)] there exists $x_{\infty}^{\Phi} > 0$ such that $\Phi$ is convex, increasing and finite-valued on $[0, x_{\infty}^{\Phi}]$, and for $x > x_{\infty}^{\Phi}$, $\Phi (x) \!=\! \infty$, or
\item [(III)] there exists  $x_{\infty}^{\Phi} > 0$ such that $\Phi$ is convex, increasing and finite-valued 
on $[0, x_{\infty}^{\Phi})$, $\displaystyle{\lim_{x\uparrow x_{\infty}^{\Phi}}\Phi (x) \!=\! \infty}$, and for $x\ge x_{\infty}^{\Phi}$, $\Phi(x) \!=\! \infty$. 
\ei
The set of all Young's functions is denoted by $\mathcal{Y}$. 
We say that $\Phi\in\mathcal{Y}$ is   {\emph {finite}} if and only if $\Phi$ satisfies (I) and {\em positive} if $\Phi >0$ on $(0,\infty)$.
\end{defn}

\begin{defn}[$N$-function \cite{AF,KR,RR1}]\label{def:Nfn}
Let $\Phi\in{\mathcal{Y}}$. We say that $\Phi$  is an  {\em {   $N$-function}} if $\Phi$ is  finite and positive    and satisfies
%\be\label{eq:Nlim}
\[
\lim_{x\to 0}\frac{\Phi (x)}{x}=
\lim_{x\to \infty}\frac{x}{\Phi (x)}=0.
\]
The set of all $N$-funtions is  donoted by $\mathcal{N}$.
\end{defn}

\begin{defn}[$\Delta_2$-condition \cite{AF,KR,Lux,Mal,RR1}]\label{def:delta2}
Let $\Phi\in{\mathcal{Y}}$. We say that $\Phi$  satisfies the   {\em {  $\Delta_2$-condition}} if  there exists  $C>0$ such that 
%\be\label{eq:delta2}
\[
\Phi (2x)\le C \Phi (x),
\qquad x\ge 0.
\]
 The set of all $\Phi\in{\mathcal{Y}}$  satisfying the      $\Delta_2$-condition is denoted by $\Delta_2$.
\end{defn}
It is easy to see that if $\Phi\in\Delta_2$ then $\Phi$   is  necessarily finite.   In the  theory of Orlicz spaces, the $\Delta_2$-condition is often required only to hold near  infinity (usually when the spatial domain has finite measure) while Definition\,\ref{def:delta2} is sometimes referred to as a global  $\Delta_2$-condition or $\Phi$ is called `$\Delta$-regular'  (see e.g., \cite[Sections 8.6-8.7]{AF}).

\begin{defn}[Orlicz space and subspaces \cite{AF,KR,Lux,Mal,RR1}]\label{def:Orlicz}
Let $\Phi\in{\mathcal{Y}}$.
\bi[leftmargin=2em]
\item[(a)] The {\emph {Orlicz space}} $\Lphi (\R^n) $ is defined as  
%\be\label{eq:somek}
\[
\Lphi(\R^n)  = \left\{u\in {\scr{L}}(\R^n)\ :\  \Is \Phi\left(\frac{|u(x)|}{k} \right)\dee x<\infty
\ \text{{for some}}\  k>0\right\}
\]
endowed with the Luxemburg norm
\be\label{eq:Lux}
\| u\|_{\Phi}=\inf \left\{ k>0\ :\ \Is \Phi\left(\frac{|u(x)|}{k} \right)\dee x\le 1\right\}.
\ee
\item[(b)] The closure of $C_0^{\infty} (\R^n)$ in $\Lphi(\R^n) $ with respect to   $\| \cdot\|_{\Phi}$ is denoted by  $\LO(\R^n) $.
\item[(c)] The {\em Morse-Transue space} $\Mphi (\R^n)$  is defined by
%\be\label{eq:allk}
\[
\Mphi (\R^n) = \left\{u\in {\scr{L}}(\R^n)\ :\  \Is \Phi\left(\frac{|u(x)|}{k} \right)\dee x<\infty
\ \text{{for all}}\  k>0\right\},
\]
with the induced norm $\| \cdot\|_{\Phi}$. 
\ei
\end{defn}

\begin{rem}\label{rem:subspace}
Definition\,\ref{def:Orlicz} can be found (in various guises) in \cite{KR,Lux,Mal,MT,ON,RR1,RR2}, with the  space $\Mphi$   first studied    in \cite{MT}. These works provide a comprehensive overview of the theory, application and history of Orlicz spaces. We mention a few facts of relevance  here.
 \bi[leftmargin=2em]
 \item[(a)] $\Lphi $ is a Banach space (\cite[Section\,3.3, Proposition\,11]{RR1}) and, as  closed subspaces  of $\Lphi$, 
 $\LO$ (by definition) and $\Mphi$  are too (see e.g. \cite[Lemma 1, p.55]{Lux}, \cite[Proposition\,1.18]{Leo} or \cite[p.97]{Mal} for the latter).  It is easy to see (e.g. \cite[p.54-55]{Lux}) that $\Mphi\neq \{0\}$ if and only if $\Phi$ is finite. If $\Phi$ is finite then $\Mphi$ contains all bounded functions having bounded support and $\LO\subseteq\Mphi$.
  Of particular significance in this  paper, if $\Phi$ is an $N$-function ($\Phi\in{\mathcal{N}}$) then   $\Mphi =\LO$ and less, significantly for us, if $\Phi\in{\mathcal{N}}\cap \Delta_2$ then  $\Mphi =\LO=\Lphi$. See  Lemma\,\ref{lem:L0M}. 
 \item[(b)]  If there exist constants $k,K>0$ such that
 \[
 \Psi(kx)\le \Phi(x)\le \Psi(Kx), \qquad x\ge 0
 \]
(i.e., $\Phi\approx\Psi$ on $[0,\infty)$)  then $\Lphi$ and $\Lpsi$ are equivalent as Banach spaces with equivalent norms. See e.g.,  \cite[Theorem 8.12]{AF}, \cite[Ch.II, Sec.2, Theorem 5]{Lux}, \cite[Theorem 3.4(a)]{Mal}.
 \item[(c)] Dilations of any Young's function $\Phi$  yield equivalent Orlicz spaces with equivalent norms. That is, if we define 
$\Phi_\lam\: [0,\infty ]\!\to\! [0,\infty ]$ ($\lam>0$) by $\Phi_\lam (x)=\Phi ( x/\lam)$, then $\Phi_\lam$ is a Young's function and $\Lphi =L^{\Phi_{\lam}}$ as  sets, with   norm satisfying $\| \cdot\|_{\Phi_\lam}={\lam}\| \cdot\|_{\Phi}$.
\item[(d)] $\Mphi$ is sometimes referred to as  the `heart' of $\Lphi$ and its elements  as  `finite'  (\cite[Definition 1, p.54]{Lux}). $\Lphi$ is also sometimes referred to as the `large' Orlicz space and 
$\Mphi $ as the small (or `mini') Orlicz space. We adopt the terminology of \cite[Definition 3]{RR2} in referring to $\Mphi$ as the Morse-Transue space, after the initiating work of \cite{MT}.
\ei
\end{rem}

The most familiar examples of  Orlicz spaces are of course Lebesgue spaces $L^q$, where $\Phi (x)=x^q/q$, $q\in [1,\infty)$. However,  note that the Young's functions permitted by Definition~\ref{def:Orlicz} include extended functions, i.e., functions which may not be finite-valued. Such functions are not included in the definition of Orlicz space used in \cite{FK,IRT1, MT1, MT2, RT1} for example, but are crucial here in deriving $L^{\infty}$-smoothing estimates for the heat semigroup acting on a general Orlicz spaces. See Example\,\ref{eg:Young}(a) for the important special case of $\Linf$.

\begin{defn}[Generalised inverse \cite{Mal,ON}]\label{def:geninv}
Let $\Phi\in{\mathcal{Y}}$. The {\emph {generalised inverse}} 
$\Phinv   \: [0,\infty ]\!\to\! [0,\infty ]$  is defined by
\[
\Phinv (y)=\inf \left\{ x> 0\ :\ \Phi (x)>y\right\},
\]
where $\inf\emptyset =\infty$.
\end{defn}
\newpage
\begin{rem}\label{rem:Young} Let $\Phi\in{\mathcal{Y}}$.
\bi[leftmargin=2em]
\item[(a)] On the interval $ [0,x_{\infty}^{\Phi})$, $\Phi$ is locally Lipschitz continuous, differentialble a.e. (by Rademacher's Theorem or Lebesgue's Differentiabilty Theorem) and 
 $\Phi (x)\le x\Phi'(x)$ a.e. (see e.g., \cite[Theorem 25.1]{R70}).
\item[(b)]  $\Phi$ is bijective if and only if $\Phi$ is finite and positive on $(0,\infty)$. In such case its generalised inverse is equal to its inverse and $\Phinv$ is unbounded.
\item[(c)] $\Phinv$ is  unbounded if and only if $\Phi$ is finite.  If $\Phi$ is not finite (i.e., type (II) or  (III) in Definition\,\ref{def:Young}) then  $\Phinv (x)\le x_{\infty}^{\Phi}$  for all $x\ge 0$. 
\item[(d)] $\Phinv$ is finite-valued, increasing and right-continuous on $[0,\infty )$ with  $\Phinv (x)>0$ for all $x>0$ and  $\Phinv (\infty) = \infty$.
\item[(e)] $\Phi (x)=\sup \left\{ y>0\ :\ \Phinv (y)<x\right\}$,
where $\sup\emptyset =0$. In particular, for all $x\in [0,\infty )$
\be
\Phi \left(\Phinv (x)\right) \le x\qquad\text{and}\qquad
 \Phinv \left(\Phi (x)\right) \ge x.\label{eq:Ainv}
\ee
See e.g., \cite[Property\,1.3]{ON}.
\ei
\end{rem}

We provide a few illustrative examples for the reader unfamiliar with Orlicz spaces.  
The dilation in Example\,\ref{eg:Young}(a)  will make a subtle and important appearance in the proof of  Corollary\,A(i).

\begin{eg} \cite[Ch.12, p.98.]{Mal}
 \label{eg:Young}
\bi[leftmargin=2em]
\item[(a)] Condition (II) in Definition\,\ref{def:Young} permits the choice
\[
\Phi^{\infty} (x)=
\begin{cases}
0,& x\in [0,1],\\
\infty,& x>1.
\end{cases}
\]
Then 
$L^{\Phi^{\infty}}=L^{{\infty}}$,  $M^{\Phi^{\infty}}=\{0\}$, $x_{\infty}^{\Phi^{\infty}}=1$ and  $\left(\Phi^{\infty}\right)^{-1}(x)\equiv 1$. 
In particular, we note by Remark\,\ref{rem:subspace}(b) that the dilation $\Phi_2^{\infty} (x)=\Phi^{\infty} (x/2)$ satisfies
%\be\label{eq:phi2inv}
\[\left(\Phi_2^{\infty}\right)^{-1}(x)\equiv 2
\]
and 
\be \label{eq:phi2norm}
\| \cdot\|_{\infty}= \| \cdot\|_{\Phi^{\infty}} = \frac{1}{2}\| \cdot\|_{\Phi_2^{\infty}}.
\ee
\item[(b)] With
\[
\Phi (x)=x,\quad x\in [0,1]; \qquad \Phi (x)=\infty,\quad x>1,
\]
$\Lphi =L^1 \cap\Linf$, $\Mphi=\{0\}$, $x_{\infty}^{\Phi}=1$ and
\[
\Phinv (x)=x,\quad x\in [0,1]; \qquad \Phinv (x)=1,\quad x>1.
\]     
\item[(c)]  Let $\Phi (x)=\max\{0,x-1\}$. Then $\Phi$ is finite, $\Lphi =L^1 +\Linf$,    $\Mphi\supseteq L^1 $, $x_{\infty}^{\Phi}=\infty$ and 
$\Phinv (x)=x+1$.
\item[(d)] For all $\Phi\in\CY$, $ L^1 \cap\Linf\subseteq\Lphi\subseteq L^1 +\Linf.$
\ei
\end{eg}

\begin{defn}[Young's Complement]\label{def:conj}
Let $\Phi\in\CY$. The {\emph {Young's complement}} 
$\Phi^*\: [0,\infty ]\!\to\! [0,\infty ]$  is defined by
\[
\Phi^*(x)=\sup_{ y\ge 0}  \left(xy-\Phi (y)\right).
%\label{eq:conj}
\]
\end{defn}
It is well known (see e.g., \cite[Property\,1.6]{ON} or \cite[Section\,2.1, Proposition\,1(ii)]{RR1}) that $\Phi^*$ is a Young's function and the following inequality holds:
\be\label{eq:property1.6}
x\le \Phi^{-1}(x)\left(\Phi^*\right)^{-1}(x)\le 2x,\qquad x\ge 0.
\ee

\subsection{Regularly Varying Functions}\label{sec:regvar}

Recall (see e.g., \cite{BGT, Sen}) that a continuous function $g\: (a,\infty)\!\to\! (0,\infty)$ is {\em regularly varying of index $\rho$} if
there exists $\rho\in\R$ such that, for all $\lam >0$,
\be\lim_{x\to\infty}\frac{g(\lam x)}{g(x)}=\lro .\label{eq:kara1}\ee
If $\rho =0$ then $g$ is said to be {\it slowly varying}.  If $\sigma,\sigma_1,...,\sigma_j$ are  slowly varying functions then the following are  true \cite[Proposition\,1.3.6, Proposition\,1.5.10]{BGT}:
\begin{itemize}
[leftmargin=3em]
\item[(i)] for any $k\in\R$ and $c>0$,  $(\sigma(x))^k$  and $(\sigma(cx))$ are slowly varying;
\item[(ii)]  for any $k>0$,  $x^k\sigma(x)\to\infty$ and   $x^{-k}\sigma(x)\to 0$ as $x\to\infty;$
\item[(iii)] the product $\sigma_1(x)...\sigma_j(x)$ is slowly varying;
\item[(iv)] for any $k>1$, ${\int_{a}^{\infty}x^{-k}\sigma(x)\dee x<\infty.}$
\end{itemize}
For regularly varying $g$, the characterisation  theorem of Karamata (see e.g.,  \cite{BGT,K30,Sen})  ensures that
\[ {g(x)}=x^\rho\sigma (x),\qquad x>a 
%\label{eq:kara2}
\]
for some {slowly varying} function $\sigma$.  Moreover, if $\rho>0$ then $g(x)\to\infty$ as $x\to\infty$. 
If 
 \be \label{eq:rhoinf}
 \lim_{x\to\infty}\frac{g(\lam x)}{g(x)}=
\begin{cases}
0, & 0<\lam <1,\\
\infty, & \lam >1
\end{cases}
\ee
then $g$ is said to be {\em rapidly varying of index $\infty$}  (see e.g., \cite[Section\,2.4]{BGT} or \cite[Definition\,1.6]{Sen}). If the roles of the limits $0$ and $\infty$ in \eqref{eq:rhoinf}
are reversed then $g$ is said to be { rapidly varying of index $-\infty$}.  When convenient and where no confusion  should arise, we will {formally} include rapidly varying functions  within the class of regularly varying functions, allowing $\rho=\pm \infty$   in the sense of  \eqref{eq:rhoinf}.

One can also define  the above notions as $x\to 0$. We say that $g(x)$ is regularly varying of  index $\rho$ as $x\to 0$ if and only if $g(1/x)$ is regularly varying of  index $-\rho$ as $x\to\infty$. This will be relevant for the global well-posedness result in Corollary\,C.

\subsection{Solution Concepts}\label{sec:solnconcepts} 
For measurable $\vphi$, let us formally  define the heat semigroup $S(t)$  in $\R^n$ by 
\be\label{eq:semigp}
[S(t)\vphi](x)=(G*\vphi)(x,t):=\Is G(x-y,t)\vphi (y)\dee y,
\ee 
with  Gaussian heat kernel on $\R^n\times (0,\infty)$ 
\be\label{eq:eg}
 G(x,t):=(4\pi t)^{-\frac{n}{2}} \exp\left(-\frac{|x|^2}{4t}\right).
\ee 
It is standard to then study \eqref{eq:she} via  its  integral formulation
\be\label{eq:VoC}
u(t)=S(t)\vphi+\int_0^t S(t-s) f(u(s))\dee s=: {\F}(u;\vphi).
\ee
With  $Q_T:= \R^n\times (0,T)$ and $\F$ defined as in \eqref{eq:VoC} we  now make precise our solution concepts (see  e.g., \cite[Section 15]{QS19}).

\begin{defn}[Solution concepts]
Let $T>0$. 
\bi[leftmargin=3em]
\item[(i)] A  measurable, finite almost everywhere (a.e.) function $u\: Q_T\!\to\!\R$ is an 
\emph{ {integral solution}} of \eqref{eq:she} in $Q_T$ if $u$ satisfies ${\mathscr F}(u;\vphi)= u$ 
pointwise a.e. in $Q_T$.
\item[(ii)]  $u$ is an \emph{{  $M^{\Phi}$-mild solution}} of \eqref{eq:she} on $[0,T)$ if  $u$ is an integral solution of  \eqref{eq:she} in $Q_T$,  $u\in C\left([0,T),\Mphi\right)$  and $u(0)=\vphi$.
\item[(iii)]  $u$ is an \emph{{  $M^{\Phi}$-classical solution}} of \eqref{eq:she} on $[0,T)$ if  $u$ satisfies \eqref{eq:she} in the classical sense in $Q_T$, $u\in C\left([0,T),\Mphi\right)\cap L^{\infty}_{\loc}\left((0,T),L^{\infty}\right)$  and $u(0)=\vphi$.
\ei
\label{def:super}
\end{defn}
Since  $f$ is locally Lipschitz continuous the extent to which any  integral solution is a classical or mild solution (or  weak, weak$^*$,  etc. in other works in the literature)  and the sense in which it attains the initial data will be dependent  upon the properties of  $\Phi$ and the continuity of  $S(t)$ as  $t\to 0$.

%%%%%%%%%%%%%%%%%%%%%%%%%%%%%%%%%%%%%%%%%%%%
\section{Main Results}\label{sec:results}
%%%%%%%%%%%%%%%%%%%%%%%%%%%%%%%%%%%%%%%%%%%%
We now give precise statements of our main results, introducing our assumptions  as required.

\subsection{Theorem\,A and Corollary\,A: Heat Semigroup Properties}
\label{sec:thmA}

Recalling the definitions of subspaces of $\Lphi$ in Definition\,\ref{def:Orlicz} and  the heat semigroup $S(t)$ in \eqref{eq:semigp}, we have the following. 

\begin{theoremA}[Heat semigroup] \label{thm:A}
Let $\Phi\in{\mathcal{Y}}$.
\bi[leftmargin=2em]
\item[(a)]   (Semigroups).
\bi[leftmargin=*]
\item[(i)]  $S(t)$ is a semigroup on $\Lphi$ satisfying 
$\left\|S(t)\vphi \right\|_{\Phi}\le 2  
 \left\|\vphi \right\|_{\Phi}$  for all $\vphi\in \Lphi$ and $ t\ge 0$. 
\item[(ii)] For all $\vphi\in \LO$, $\|S(t)\vphi -\vphi\|_{\Phi}\to 0$ as $t\to 0$.
\item[(iii)] If  $\Phi\in\mathcal{N}$ 
then $S(t)$ is a $C_0$-semigroup on $\Mphi$ satisfying the estimate in (i)
 for all $\vphi\in \Mphi$  and $ t\ge 0$. 
\ei
 \item[(b)] (Smoothing). Let $\Psi\in{\mathcal{Y}}$
  and suppose there exists $\beta\in (0,1)$ and   
 $\T\in{\mathcal{Y}}$ such that
\be
\label{eq:cc}
\beta x\Psinv (x) \le { \Phinv (x)}\T^{-1}(x)\le {x\Psinv (x) },\qquad  x\ge 0.
\ee
\bi[leftmargin=*]
\item[(i)]  For all $t>0$, $S(t)\: \Lphi\!\to\!\Lpsi$ is a bounded linear operator and 
there exists a constant $\C =\C (n,\Phi,\Psi)>0$ such that  for all $\vphi\in \Lphi$
 and $ t>0$,
\be\label{eq:smoothing1}
  \left\|S(t)\vphi \right\|_{\Psi}\le  \C  \frac{{\Phinv(t^{-\frac{n}{2}})}}{\Psinv(t^{-\frac{n}{2}})}\left\|\vphi \right\|_{\Phi}. 
 \ee
 \item[(ii)]  If  $\displaystyle{\lim_{s\to \infty}\frac{\Psinv(s)}{\Phinv(s)}=0}$ then 
 \[
 \displaystyle{ \lim_{t\to 0}   \left[\frac{\Psinv(t^{-\frac{n}{2}})}{{\Phinv(t^{-\frac{n}{2}})}}\left\|S(t)\vphi \right\|_{\Psi}\right]= 0,
}
\] 
uniformly for $\vphi$ in compact subsets of  $\LO$.
\ei
\ei
\end{theoremA}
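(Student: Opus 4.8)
The plan is to treat the four assertions in order, since (a)(ii) and (a)(iii) lean on (a)(i), and (b)(ii) leans on (b)(i). For \textbf{(a)(i)}, the semigroup property $S(t+s)=S(t)S(s)$ and $S(0)=I$ are just the Chapman--Kolmogorov identity for the Gaussian kernel, valid pointwise a.e. since $G(\cdot,t)\in L^1$ with $\|G(\cdot,t)\|_1=1$. For the norm bound, the natural tool is Jensen's inequality: since $\Phi$ is convex and $G(x-\cdot,t)\dee y$ is a probability measure,
\[
\Phi\!\left(\frac{|[S(t)\vphi](x)|}{k}\right)\le \Is G(x-y,t)\,\Phi\!\left(\frac{|\vphi(y)|}{k}\right)\dee y,
\]
and integrating in $x$ and using Tonelli gives $\int_{\R^n}\Phi(|S(t)\vphi|/k)\le \int_{\R^n}\Phi(|\vphi|/k)$. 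Taking $k$ slightly above $\|\vphi\|_\Phi$ shows $S(t)$ is a contraction in Luxemburg norm; the factor $2$ is slack one can afford (it comes in handy later when $\Phi$ is only extended-valued and one has to be careful that the defining integral is $\le 1$ rather than exactly normalised). This simultaneously shows $S(t)\colon\Lphi\to\Lphi$ and $S(t)\colon\Mphi\to\Mphi$, since the estimate holds for \emph{every} $k>0$.

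For \textbf{(a)(ii)}: strong continuity at $0$ holds on $C_0^\infty(\R^n)$ by a standard mollification/uniform-continuity argument (for $\vphi\in C_0^\infty$, $S(t)\vphi\to\vphi$ uniformly and with uniformly bounded supports up to a harmless tail, so also in $\|\cdot\|_\Phi$), and then one extends to the closure $\LO$ by the uniform bound $\|S(t)\|\le 2$ from (a)(i) via the usual $\varepsilon/3$ argument. For \textbf{(a)(iii)}: when $\Phi\in\cal N$ one has $\Mphi=\LO$ by Lemma~\ref{lem:L0M}(ii), so (a)(ii) already gives $\|S(t)\vphi-\vphi\|_\Phi\to 0$ for all $\vphi\in\Mphi$; combined with the semigroup property and the uniform bound this is exactly the $C_0$-semigroup property.

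The substantive part is \textbf{(b)(i)}, the two-space smoothing estimate, and this is where I expect the main difficulty. The idea is to interpolate the trivial $L^1$-$L^1$ bound against the $L^1$-$L^\infty$ bound $\|S(t)\vphi\|_\infty\le (4\pi t)^{-n/2}\|\vphi\|_1$, but carried out at the level of Orlicz norms rather than Lebesgue norms, using the compatibility hypothesis \eqref{eq:cc} to convert between the two scales. Concretely: fix $t>0$, normalise so that $\|\vphi\|_\Phi\le 1$, i.e.\ $\int\Phi(|\vphi|)\le 1$. Split $\vphi=\vphi\chi_{\{|\vphi|>\mu\}}+\vphi\chi_{\{|\vphi|\le\mu\}}$ at a threshold $\mu=\mu(t)$ to be chosen; the ``small'' part is in $L^\infty$ with norm $\le\mu$ and contributes, after $S(t)$, something controlled in $L^\infty$, while the ``large'' part lies in $L^1$ with $\|\vphi\chi_{\{|\vphi|>\mu\}}\|_1$ estimated by $\int\Phi(|\vphi|)/\Phi'(\mu)\lesssim 1/(\mu^{-1}\Phi(\mu))$ using Remark~\ref{rem:Young}(a), and $S(t)$ maps it into $L^\infty$ with a factor $t^{-n/2}$. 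One then asks for which $\Psi$ the resulting pointwise bound on $|S(t)\vphi|$ translates into $\|S(t)\vphi\|_\Psi\lesssim \C\,\Phinv(t^{-n/2})/\Psinv(t^{-n/2})$. The role of \eqref{eq:cc}, rewritten via $\Phi^{-1}(x)\Theta^{-1}(x)\approx x\Psi^{-1}(x)$ and the complementary-function inequality \eqref{eq:property1.6}, is precisely to make the bookkeeping close: choosing $\mu$ comparable to $\Phi^{-1}(t^{-n/2})$ balances the two pieces, and $\Theta$ absorbs the ``leftover'' scale so that $L^1$-into-$L^\Psi$ smoothing via $L^\infty$ produces the stated ratio. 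Some care is needed because $\Psi$ and $\Theta$ may be extended-valued (this is exactly why Example~\ref{eg:Young}(a) and the dilation $\Phi_2^\infty$ are flagged): when $\Psi$ is of type (II)/(III) the target is essentially $L^\infty$ and one must read the estimate through $(\Phi_2^\infty)^{-1}\equiv 2$ and \eqref{eq:phi2norm} rather than literally. Linearity and boundedness of $S(t)\colon\Lphi\to\Lpsi$ then follow from the homogeneous estimate plus density.

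Finally \textbf{(b)(ii)}: given $\Psi^{-1}(s)/\Phi^{-1}(s)\to 0$, the quantity to be bounded is $\Psi^{-1}(t^{-n/2})/\Phi^{-1}(t^{-n/2})\cdot\|S(t)\vphi\|_\Psi$; by (b)(i) this is $\lesssim\C\|\vphi\|_\Phi$, which is only a boundedness statement, so to get the limit $0$ one argues by density exactly as in (a)(ii). On $C_0^\infty$ (or on a finite-dimensional set of test functions approximating a given compact $K\subset\LO$) one has a \emph{better} estimate: for $\vphi\in C_0^\infty$, $S(t)\vphi$ stays bounded in $L^\infty$ and in $L^1$, so $\|S(t)\vphi\|_\Psi$ stays bounded while the prefactor ratio evaluated at $t^{-n/2}\to\infty$ tends to $0$ by hypothesis; then the uniform-on-compacts bound from (b)(i) upgrades this to the full statement uniformly over $K$ by the $\varepsilon/3$ trick. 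The one genuine subtlety throughout is handling the extended-valued Young's functions consistently in the generalised-inverse arithmetic; everything else is Jensen, interpolation, and density.
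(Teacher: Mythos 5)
Your parts (a) and (b)(ii) are essentially sound, and (a)(i) via Jensen is a legitimate alternative to the paper's route (the paper deduces the bound from O'Neil's convolution inequality with $\T(x)=x$, which produces the factor~$2$; your Jensen argument actually gives the sharper contraction $\|S(t)\vphi\|_\Phi\le\|\vphi\|_\Phi$, a route the paper itself flags in a remark). For (a)(ii)--(iii) and (b)(ii) you follow the same density/$\varepsilon$-argument structure as the paper, with (b)(ii) relying on (b)(i) plus strong continuity from (a)(ii) exactly as the paper does.

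The genuine gap is in \textbf{(b)(i)}. Your $L^1$--$L^\infty$ splitting at $\mu=\Phinv(t^{-n/2})$ does, after a short computation using $\Phi(x)\le x\Phi'(x)$ and Chebyshev, recover the \emph{pointwise} bound $\|S(t)\vphi\|_\infty\lesssim\Phinv(t^{-n/2})\|\vphi\|_\Phi$ --- but that is Corollary\,A(i), not the full statement. The theorem asks for $\|S(t)\vphi\|_\Psi$ for an arbitrary target $\Psi$ satisfying \eqref{eq:cc}, and your sketch never actually converts the $L^\infty$ control on the two pieces into an $L^\Psi$ bound. A function bounded by $M$ in $L^\infty$ can have arbitrarily large $\|\cdot\|_\Psi$; the extra information you'd need (an $L^1$ bound on $S(t)\vphi_2$, say) is not available, since the ``small'' piece $\vphi_2=\vphi\chi_{\{|\vphi|\le\mu\}}$ is generally not integrable. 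The phrase ``$\T$ absorbs the leftover scale so that $L^1$-into-$L^\Psi$ smoothing via $L^\infty$ produces the stated ratio'' names the obstruction without resolving it, and \eqref{eq:property1.6} alone does not supply the missing step (the paper only invokes it in the degenerate case $\Psi=\Phi_2^\infty$, $\T=\Phi^*$ for Corollary\,A). What the paper actually does is quite different: it applies O'Neil's Orlicz-space Young inequality (Lemma~\ref{lem:oneil}, $\|G*\vphi\|_\Psi\le 2\|G\|_\T\|\vphi\|_\Phi$) and then proves the separate, substantive Proposition~\ref{prop:Gnorm} giving $\|G(\cdot,t)\|_\T\le C\,t^{-n/2}/\T^{-1}(t^{-n/2})$ by passing to polar coordinates and bounding $\int_0^a y^{-1}\T(y)(\log(a/y))^{j-1}\dee y$ by induction on the dimension (with separate treatment of even and odd $n$ and of extended-valued $\T$). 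That Gaussian--Orlicz norm estimate is the engine of the whole theorem, and nothing in your splitting argument replaces it. To fix your proposal you would either need to reproduce an estimate of $\|G(\cdot,t)\|_\T$ (and then invoke O'Neil), or supply a genuinely new mechanism converting the $(L^1,L^\infty)$ decomposition of $S(t)\vphi$ into an $L^\Psi$ bound compatible with \eqref{eq:cc}.
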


\begin{rem} \label{rem:thetachoice}\ 
\bi[leftmargin=2em]
\item[(i)] The bound in  (a)(i) of Theorem\,A is not new and can be obtained in a number of ways up to a constant,  for example via interpolation \cite[Theorem~3.2$\,^\prime$]{Mal}. 
\item[(ii)]  The choice  
\be\label{eq:thetachoice}
\Tinv (x)=
\frac{ x\Psinv(x)}
{\Phinv(x)},\qquad x>0
\ee 
clearly satisfies the inequality \eqref{eq:cc} and in many cases will be a strictly increasing bijection on $(0,\infty)$, so the generalised inverse  of  $\Tinv$ is the same as its standard function inverse; i.e.,  $(\Tinv)^{-1}=\T$. To  show that $\T$ is a Young's function, it is then sufficient to check that $\Tinv$ is concave. 
\ei
\end{rem}

Corollary\,A below is crucial for the development of our well-posedness theory for  \eqref{eq:she}.

\begin{corollaryA}[$L^{\infty}$-smoothing] \label{cor:Linf}
Let $\Phi\in{\mathcal{Y}}$.
\bi[leftmargin=2em]
\item[(i)] There exists a constant $\C=\C (n,\Phi) >0$ such that  for all $\vphi\in \Lphi$ and $ t>0$, 
\be\label{eq:smoothing3}
  \left\|S(t)\vphi \right\|_{\infty}\le \C  \Phinv\left( t^{-\frac{n}{2}}\right)\left\|\vphi \right\|_{\Phi}.
 \ee
\item[(ii)] If  $\Phi$ is  finite then
 \[
{\lim_{t\to 0}\frac{\left\|S(t)\vphi \right\|_{\infty}}{\Phinv\left( t^{-\frac{n}{2}}\right)}=0},
\]
uniformly for $\vphi$ in compact subsets of  $\LO$.
\ei
\end{corollaryA}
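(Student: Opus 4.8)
The plan is to derive Corollary~A directly from Theorem~A by the two-line argument already sketched in the preamble, filling in the small routine verifications. For part~(i) I would set $\Psi=\Phi_2^{\infty}$, the dilation by $2$ of the extended Young's function $\Phi^{\infty}$ of Example~\ref{eg:Young}(a), so that by \eqref{eq:phi2inv} we have $\Psinv(x)\equiv 2$. Then I must check the compatibility condition \eqref{eq:cc}. Taking $\T=\Phi^*$, the Young's complement, inequality \eqref{eq:cc} reads
\be
\beta\,x\cdot 2\le \Phinv(x)\,\left(\Phi^*\right)^{-1}(x)\le x\cdot 2,\qquad x\ge 0,
\ee
which is exactly \eqref{eq:property1.6} after dividing by (or multiplying through by) the constant $2$; so \eqref{eq:cc} holds with $\beta=1/2$ (the upper bound being the content of the right-hand inequality of \eqref{eq:property1.6} divided by $2$, and similarly for the lower). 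Theorem~A(b)(i) then gives $S(t)\colon\Lphi\to L^{\Phi_2^{\infty}}(\R^n)$ bounded with
\be
\left\|S(t)\vphi\right\|_{\Phi_2^{\infty}}\le \C\,\left\|\vphi\right\|_{\Phi}\,\frac{\Phinv(t^{-n/2})}{2},
\ee
and finally I invoke \eqref{eq:phi2norm}, namely $\|\cdot\|_{\infty}=\tfrac12\|\cdot\|_{\Phi_2^{\infty}}$, to convert the left side into the $L^\infty$ norm; absorbing the harmless factors of $2$ into $\C$ yields \eqref{eq:smoothing3}.

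For part~(ii) I would keep $\Psi=\Phi_2^{\infty}$ and apply Theorem~A(b)(ii): the hypothesis there is $\Psinv(s)/\Phinv(s)\to 0$ as $s\to\infty$, i.e. $2/\Phinv(s)\to0$, which holds precisely because $\Phi$ is finite and positive on $(0,\infty)$ — equivalently $\Phinv$ is unbounded by Remark~\ref{rem:Young}(b). (Here I should note the minor point that if $\Phi$ is finite but vanishes on an initial interval, one replaces $\Phi$ by an equivalent strictly positive finite Young's function via Remark~\ref{rem:subspace}(b), which changes norms and $\Phinv$ only by fixed constants and leaves both the statement and $\LO$ unchanged; alternatively one observes $\Phinv$ is still unbounded since $\Phi$ is finite-valued and convex and not eventually $+\infty$.) Theorem~A(b)(ii) then yields
\[
\lim_{t\to0}\left[\frac{2}{\Phinv(t^{-n/2})}\left\|S(t)\vphi\right\|_{\Phi_2^{\infty}}\right]=0
\]
uniformly for $\vphi$ in compact subsets of $\LO$, and \eqref{eq:phi2norm} again converts the $\Phi_2^{\infty}$-norm to the $L^\infty$-norm, giving the claimed limit.

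The only genuine obstacle is bookkeeping rather than mathematics: one must be careful that the generalised inverse of the \emph{dilated} function $\Phi_2^\infty$ really is the constant $2$ (this is \eqref{eq:phi2inv}, already recorded), that the norm identity \eqref{eq:phi2norm} is applied in the correct direction, and that the constant $\C$ in \eqref{eq:smoothing1}, which depends on $n,\Phi,\Psi$, becomes a constant depending only on $n$ and $\Phi$ once $\Psi=\Phi_2^\infty$ is fixed. The positivity/finiteness hypothesis in part~(ii) is exactly what is needed to make the ratio hypothesis of Theorem~A(b)(ii) hold, so no extra work is required there beyond the remark about replacing $\Phi$ by an equivalent strictly positive one if it happens to vanish near $0$.
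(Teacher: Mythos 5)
Your proposal is correct and takes essentially the same route as the paper's own proof: choose $\Psi=\Phi_2^\infty$ so that $\Psinv\equiv 2$, take $\T=\Phi^*$, verify \eqref{eq:cc} with $\beta=1/2$ via \eqref{eq:property1.6}, and then apply Theorem~A(b)(i)--(ii) together with the norm identity \eqref{eq:phi2norm}. One small correction to your parenthetical for part~(ii): the first alternative does not work — if $\Phi$ vanishes on an initial interval $[0,a]$ then any comparison $c\Psi(kx)\le\Phi(x)$ as in Remark~\ref{rem:subspace}(b) forces $\Psi$ to vanish on $[0,ka]$ as well, so there is no \emph{strictly positive} equivalent Young's function; indeed $L^{\Phi}$ and $L^{\Psi}$ genuinely differ near zero (e.g.\ $\Phi(x)=\max\{x-1,0\}$ yields $L^1+L^\infty$, which no positive $\Psi$ reproduces). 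However your second alternative is exactly right and self-contained: for $\Phi$ finite-valued (and not identically zero, as required of a Young's function), $\Phi(M)<\infty$ for every $M$, hence $\Phinv(y)\ge M$ for all $y>\Phi(M)$, so $\Phinv$ is unbounded without any positivity hypothesis; this is in fact a slightly sharper observation than the paper's citation of Remark~\ref{rem:Young}(b), which is stated only for $\Phi$ finite \emph{and} positive.
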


\begin{rem}\label{rem:Adelta2}
\bi[leftmargin=2em]
\item[\ ]
\item[(i)] To the best of our knowledge the results in Theorem\,A (except (a)(i)) and Corollary\,A in general Orlicz spaces are completely new. 
\item[(ii)]  Only Theorem\,A(a)(iii) requires $\Phi$ to be an $N$-function.  
\item[(iii)] By Remark\,\ref{rem:subspace}(a) (see  Lemma\,\ref{lem:L0M}), if    $\Phi\in{\mathcal{N}}$  then $\LO$ can be replaced by $\Mphi$ in Theorem\,A and Corollary\,A. Likewise, if 
   $\Phi\in{\mathcal{N}}\cap \Delta_2$ then both $\LO$ and $\Mphi$ can be replaced by $\Lphi$ in Theorem\,A and Corollary\,A.
   \ei
\end{rem}

We illustrate the estimates in  Theorem\,A  and Corollary\,A with a few simple examples, some  already known, some new. Those already in the literature were obtained by different methods using structural properties between exponential Orlicz spaces and Lebesgue spaces (see  previous comments in Section\,\ref{sec:background}).

%%%%%%%%%%%%%%%%%%%%%%%%%%%%%%%%%%%%%%%%%%\subsection{Examples}
%%%%%%%%%%%%%%%%%%%%%%%%%%%%%%%%%%%%%%%%%%
\begin{eg} \label{eg:Lebesgue}
Consider the case  $\Phi (x)=x^q/q$ and  $\Psi (x)=x^r/r$ with $1\le q\le r<\infty$. 
Following Remark\,\ref{rem:thetachoice}(ii) we see that \eqref{eq:cc} is satisfied by the Young's function 
$\T (x)=  C(q,r)x^{\frac{qr}{qr+q-r}}$. 
It follows from  \eqref{eq:smoothing1} in Theorem\,A  that
\[
  \left\|S(t)\vphi \right\|_{r}\le \C    t^{-\frac{n}{2}\left(\frac{1}{q}-\frac{1}{r}\right)}\left\|\vphi \right\|_{q},\qquad t>0.
 \]
If $\Psi=\Phi^{\infty}$ then by \eqref{eq:smoothing3} in  Corollary\,A, 
\[
  \left\|S(t)\vphi \right\|_{\infty}\le \C   t^{-\frac{n}{2q}}\left\|\vphi \right\|_{1} ,\qquad t>0.
 \]
Up to a constant we therefore recover the familiar $L^q$-$L^r$ smoothing estimate between Lebesgue spaces \cite[Proposition 48.4]{QS19}.  
\end{eg}

\begin{eg} \label{eg:sumspace}
Consider $\Phi (x)=\max\{x-1,0\}$ so that $\Lphi =L^1 +\Linf$ (recall Example\,\ref{eg:Young}(c)) and  $\Psi =\Phi^{\infty}$, so that $\Lpsi=\Linf$. 
Then $\Phinv (x)=1+x$ and the smoothing estimate \eqref{eq:smoothing3} of Corollary\,A  yields
\[
  \left\|S(t)\vphi \right\|_{\infty}\le \C   \left(1+t^{-\frac{n}{2}}\right)\left\|\vphi \right\|_{L^1+L^{\infty}} ,\qquad t>0.
 \]
 This estimate is like  the one in \cite[Proposition~2.1]{ABCD}.
\end{eg}

\begin{eg} \label{eg:exp2}
Consider $\Phi (x)=\exp\exp(x^q)-\e$ for $q\ge 1$, writing $\Lphi=\exp\exp L^q$ and take $\Lpsi=\Linf$. 
The smoothing estimate \eqref{eq:smoothing3} of Corollary\,A gives
\[
  \left\|S(t)\vphi \right\|_{\infty}\le \C   \left(\log\log \left(\e+t^{-\frac{n}{2}}\right)\right)^{\frac{1}{q}}\left\|\vphi \right\|_{\exp\exp L^q},\qquad t>0.
 \]
This type of estimate with $\Phi$ growing faster than exponentially seems to be new.
\end{eg}

\begin{eg} \label{eg:expLqexpLr}
For $q\ge 1$, define $\exp L^q=\Lphi$, where $\Phi (x)=\exp (x^q)-1$. For $ r\in [q,\infty)$ we consider 
 $\Lpsi =\exp L^r$. Recalling  \eqref{eq:thetachoice}, we consider  the function
\[
\T^{-1}(x)= x \left(\log \left(1+ x\right)\right)^{\frac{1}{r}-\frac{1}{q}}.
\]
It can be shown that  $\T^{-1}$ is increasing, concave and invertible. Its inverse, $\T$,  is then a  Young's function satisfying  \eqref{eq:cc}. Estimate \eqref{eq:smoothing1} of  Theorem\,A then gives
 \[
  \left\|S(t)\vphi \right\|_{\exp L^r}\le \C  \left(\log \left(1+ t^{-\frac{n}{2}}\right)\right)^{\frac{1}{q}-\frac{1}{r}}\left\|\vphi \right\|_{\exp L^q} ,\qquad t>0.
 \]
This estimate between two exponential Orlicz spaces seems to be new.
\end{eg}

\begin{eg} \label{eg:LqexpLr}
For $ q,r\ge 1$, take $\Phi (x)=x^q/q$ and $\Psi (x)=\exp (x^r)-1$. In a similar manner to Example\,\ref{eg:expLqexpLr} and  recalling  \eqref{eq:thetachoice},
we consider the function
\[
\T^{-1}(x)= C(q)x^{1-\frac{1}{q}}\left(\log \left(1+ x\right)\right)^{\frac{1}{r}}.
\]
For $1\le q\le r$, $\T^{-1}$ is concave and its inverse $\T$  is  a suitable  Young's function satisfying \eqref{eq:cc}. 
The smoothing estimate \eqref{eq:smoothing1}  yields
\[
  \left\|S(t)\vphi \right\|_{\exp L^r}\le \C   t^{-\frac{n}{2q}}\left(\log \left(1+ t^{-\frac{n}{2}}\right)\right)^{-\frac{1}{r}}\left\|\vphi \right\|_{q} ,\qquad t>0.
 \]
This  estimate is obtained  in \cite[Proposition 3.2]{MT2} using the special exponential structure  discussed in Section\,\ref{sec:background}. See also \cite[Lemma~2.2]{I1} for a special case. 
\end{eg}

%%%%%%%%%%%%%%%%%%%%%%%%%%%%%%%%%%%%%%%%%%%%%%%%%
\subsection{Theorem\,B and  Corollary\,B: Local and Global   Well-posedness }\label{sec:thmB}
%%%%%%%%%%%%%%%%%%%%%%%%%%%%%%%%%%%%%%%%%%%%%%%%%

%%%%%%%%%%%%%%%%%%%%%%%%%%%%%%%%%%%%%%\subsection{Theorem\,B and Theorem\,B part (d): Well-posedness}
%%%%%%%%%%%%%%%%%%%%%%%%%%%%%%%%%%%%%%
Recall  that the Fujita exponent is given by 
\[
\pF:=1+\frac{2}{n}.
\]
In all that follows we  will assume that $f$ satisfies:
\bi
\item[{\bf (L)}] $f\: \R\!\to\!\R$  is locally Lipschitz  continuous and $f(0)=0$.
\ei
For $f$ satisfying {  {\bf (L)}} we  define the increasing Lipschitz modulus function   $\ell  \: (0,\infty )\!\to\! (0,\infty )$  by 
\be
\ell (s)=\sup_{\substack{ |u|,|v|\le  s, \\ u\neq v}}\frac{|f(u)-f(v)|}{|u-v|}.
\label{eq:L}
\ee
Consequently, 
\be
|f(u)-f(v)|\le \ell (s)|u-v| \qquad\text{for all\ }\  |u|,|v|\le s.
\label{eq:fellplus}
\ee

\begin{defn}[Positone]\label{def:positone}
We  say that  $f\:\R\!\to\!\R$ is {\em positone} if $uf(u)\ge 0$ for all $u\in\R$.   
\end{defn}

\begin{theoremB}[Local Well-posedness]  \label{thm:B}
Let $\Phi\in\mathcal{N}$  and $f$ satisfy {\bf (L)}.   Suppose    there exists    $\lam >0$    such that
\be\tag{$I^{\infty}$}\label{eq:IC}
\int_1^{\infty} x^{-\pF}\ell\left(  \lam    \Phinv\left(x   \right)\right)\dee x<\infty .
\ee 
\bi[leftmargin=2em]
\item[(a)] (Uniform existence)   
For any   compact subset $\K$  of  $\Mphi$, there exists  $T_{\K}>0$ such that for all $\vphi\in\K$ there is   an   $M^{\Phi}$-classical solution $u(t;\vphi)$ of \eqref{eq:she} on $[0,T_{\K})$. Moreover, $u(t;\vphi)$ satisfies 
\be\label{eq:Ca}
\lim_{t\to 0}\frac{\|u(t;\vphi)\|_\infty }{\Phinv\left(t^{-\frac{n}{2}} \right)}=0
\ee
uniformly for $\vphi\in\K$. 
Furthermore, if $f$ is positone and 
$\vphi\ge 0$ (resp. $\vphi\le 0$), then $u(t;\vphi)\ge 0$ (resp. $u(t;\vphi)\le 0$) on $[0,T_{\vphi})$, where  $T_\vphi :=T_{\{\vphi\}}$.
 \item[(b)] (Uniqueness)   For all $T\in (0,T_{\vphi})$, $u(t;\vphi)$ is the unique      $M^{\Phi}$-classical solution  of \eqref{eq:she} on $[0,T)$.
\item[(c)] (Continuous dependence) For any compact subset $\K$  of  $\Mphi$, there exist $C_{\K}>0$ and $\tau_{\K}  >0$ such that for all   $\phi , \vphi\in\K$ and $ t\in (0,\tau_{\K} ]$,
\[
\left\|u (t;\phi)-u (t;\vphi)\right\|_{\Phi}+\frac{\left\|u (t;\phi)-u (t;\vphi)\right\|_{\infty}}{{\Phinv (t^{-\frac{n}{2}})}} \le C_{\K}\left\|\phi-\vphi\right\|_{\Phi}.
\]
\item[(d)] (Comparison principle)
Suppose  $f$ is  increasing.     
If  $\phi,\vphi\in\Mphi$ satisfy $ \phi\le \vphi$ then $u (t;\phi)\le u (t;\vphi)$ on $[0,T)$, where $T= \min\{T_{\phi},T_{\vphi}\}$.
\ei
\end{theoremB}

\newpage

\begin{rem}\label{rem:I2I3}\ 
\bi[leftmargin=2em]
\item[(a)]   In the usual way (see e.g., \cite[Proposition 16.1(i)]{QS19}), for any $\vphi\in M^{\Phi}$ parts (a-c) of Theorem\,B allow one to define the maximal existence time of  an $M^{\Phi}$-classical solution of \eqref{eq:she}, 
which we  continue to denote by   $T_{\vphi}$. Since these solutions are  bounded and classical  for $t>0$, the usual `finite time blow-up versus global continuation' dichotomy holds in the sense of
$L^{\infty}$-classical solutions  \cite[Proposition 16.1(ii)]{QS19}. However, since we cannot guarantee a uniform existence time for initial data in bounded subsets of $M^{\Phi}$  (as per \cite[Proposition 16.1(iii)]{QS19}, say), but only in compact ones,   it is still conceivable that a solution might  be continued globally in time as an $M^{\Phi}$-mild solution even if $ T_{\vphi}<\infty$ and $\|u(t;\vphi)\|_{\infty}\to\infty$ as $t\to T_{\vphi}$  \cite{B77}.
\item[(b)] Likewise, parts (a-c) of Theorem\,B enable us to define a local semiflow $\U\:D\to M^{\Phi}$ on a  domain $D\subseteq M^{\Phi}\times [0,\infty)$ via $\U ( \vphi ,t) =u(t;\vphi)$, for $\vphi\in M^{\Phi}$ and $t\in [0,T_{\vphi})$. Moreover, if $f$ is positone (resp. increasing) then $\U $ is positone (resp. increasing) in $\vphi$.  
\item[(c)] If $f$ is   convex on $[0,\infty)$, positone and odd then $f$ satisfies {\bf (L)} and $\ell (s)=f'(s)$ for  a.e. $s\ge 0$. This type of nonlinearity is commonplace in applications;  see also Remark\,\ref{rem:assC}.
\item[(d)] Any  $\Phi\in\mathcal{N}$  is bijective and locally Lipschitz so by the change of variables $y=\Phinv (x)$, \eqref{eq:IC} is seen to be equivalent to 
\be\label{eq:LEP0inf}
\int_1^\infty \ell  (\lam y)\left[\Phi ( y) \right]^{-\pF}  \Phi' ( y)\dee y<\infty .
 \ee
 \ei
\end{rem}

\begin{corollaryB}[Global Well-posedness]
\label{cor:B}
 Let $\Phi\in\mathcal{N}$  and $f$ satisfy {\bf (L)}.
Suppose there exists  $\lam > 0$ such that
 \be\tag{$I_0^{\infty}$}\label{eq:Iglobal}
 \int_0^{\infty} x^{-\pF}\ell\left(  \lam    \Phinv\left(x   \right)\right)\dee x<\frac{n}{4},
 \ee
 and let
\be\label{eq:Aglobal}
A :=\left(1-\frac{2}{n}\int_0^{\infty} x^{-\pF}\ell\left(  \lam    \Phinv\left(x   \right)\right)\dee x\right)^{-1}.
\ee
For every   $\vphi\in \Mphi$  with $ \left\|\vphi \right\|_{\Phi}\le \lam/(A\C)$ (where $\C$ is as in \eqref{eq:smoothing3}),  there exists  a   unique global-in-time     $M^{\Phi}$-classical solution $u\in L^{\infty}\left( (0,\infty), \Mphi\right)$ of \eqref{eq:she}. 
Moreover for all  such $\vphi$,  $\|u(t;\vphi)\|_{\infty}\le \lam\Phinv (t^{-n/2})$ for all $t>0$ 
and  $u(t;\vphi)\to 0$ in $\Linf$ as $t\to\infty$.
\end{corollaryB}

\begin{rem}\label{rem:BCdelta2}
\bi[leftmargin=2em]
\item[ ]
\item[(a)] By local uniqueness of $M^{\Phi}$-classical solutions (Theorem\,B(b)),  it is easy to see that the global solutions of Corollary\,B inherit all the same properties as the local solutions of Theorem\,B. Indeed, by uniqueness this is clearly true for small times, including continuous dependence, sign-invariance when $f$ is positone and monotonicity with respect to $\vphi$ when $f$ is increasing. Since 
these solutions are global classical solutions and $f$ is Lipschitz continuous with $f(0)=0$, classical $\Linf$-theory  imply that sign-invariance (resp. monotonicity)  are conserved for all time when $f$ is positone (resp. increasing).   
\item[(b)] In a similar manner to Remark\,\ref{rem:Adelta2}, if   
   $\Phi\in{\mathcal{N}}\cap \Delta_2$ then  $\Mphi$ can be replaced by $\Lphi$ in Theorem\,B and Corollary\,B.
  \item[(c)]   The integral condition \eqref{eq:Aglobal}   generalises the one obtained for P-type nonlinearities in \cite[Theorem\,5.1]{LS21}  in the special case  $\Lphi =L^1 \cap\Linf$.
   \ei
\end{rem}

\subsection{Theorem\,C and Corollary\,C: Critical Young's Functions for  FP-type}\label{sec:thmC}

We now focus  attention on positone-convex nonlinearities of FP-type.
The integral condition \eqref{eq:IC} in Theorem\,B  provides a sufficient compatibility condition on $f$ and $\Phi$ for the local  well-posedness 
of classical solutions of \eqref{eq:she} in $\Mphi$,
while   \eqref{eq:Iglobal} of  Corollary\,B  provides a sufficient  condition  for  global  well-posedness  in $\Mphi$ for small initial data. 
In Theorem\,C(a) we  reduce the test  for  local  well-posedness  to a simple   comparison between $f$ and $\Phi$ with respect to the asymptotic order relation
 $\lesssim$ at infinity. 
We  show that this condition is also necessary in a certain sense, yielding an asymptotically critical  choice $\Phi_c$ of 
 $\Phi$ (modulo $\approx$)  for  local  well-posedness.  We also obtain an additional sufficient condition near zero for  local  well-posedness in the larger class $C([0,T],M^{\Phi})$ of mild-$M^{\Phi}$ solutions. In Theorem\,C(b) we are able to reduce  condition \eqref{eq:Iglobal} to a simpler one 
of integrability near zero. For nonlinearities varying regularly at zero of index $m>\pF$ (reminiscent of  \cite{Fuj1})  this condition simplifies  further to an order relation between $\Phi$ and $f$ on $[0,\infty)$ (Corollary\,C).

We introduce our  structural assumptions on  $f$.

\vspace{2mm}
\begin{itemize}[leftmargin=3em]
 \item[{\bf (S)}] (Structural)   $f\in C^1(\R)$ is positone,  positive  on $(0,\infty)$,  convex on $[0,\infty)$ and 
  there exist  $ C >0$ such that      
  $\ell(u)\le Cf'(u)$ for all $u>0$.
\end{itemize}

 \begin{rem}\label{rem:assC}  
\bi[leftmargin=2em]
 \item[\ ] 
 \item[(a)] Note that for any continuous, positone $f$ we have $f(0)=0$. Thus any $f$ satisfying  {\bf (S)} necessarily satisfies  {\bf (L)}.
  \item[(b)]   Suppose $f\in C^1(\R)$ is positone,  positive  on $(0,\infty)$ and  convex on $[0,\infty)$.   If   we define the signed Lipschitz modulus of $f$ by 
\[ \ell^+ (s)=\sup_{\substack{ 0\le u,v\le  s, \\ u\neq v}}\frac{|f(u)-f(v)|}{|u-v|},
\]
then  $\ell^+(u)=f'(u)$. Hence the assumption  $\ell(u)\le Cf'(u)$  in {\bf (S)} is equivalent to  $\ell(u)\le C\ell^+(u)$.  This condition holds if $f$ is odd, for example. 
 \ei
 \end{rem}

Next we  introduce our FP-type  growth assumption.
 
\vspace{2mm}
\begin{itemize}[leftmargin=3em]
\item[{\bf (G)}]  (Growth)    $(\log f)'$ is regularly varying of  index $\rho-1$ with $\rho\in (0,\infty]$ and  
 $\displaystyle{\lim_{u\to\infty}{ f'(u)F(u)}}$ exists and is finite, where $F(u):= \int_{u}^\infty {1}/{f(s)}\dee s.$
\end{itemize}

 \begin{rem}\label{rem:assG}  
Suppose  $f$   satisfies  {\bf (S)} and {\bf (G)}.
\bi[leftmargin=2em]
 \item[(a)]   We will see  that  $\log f$  is regularly varying of index $\rho $ (Lemma\,\ref{lem:LHop}(i)) and that $f$ grows faster than any polynomial (Lemma\,\ref{lem:LHop}(ii)),  so $f$  is  of FP-type.
\item[(b)]  Lemma\,\ref{lem:LHop}(iv) also shows that  the limiting value of $f'F$ in {\bf (G)} is   necessarily one. For sufficiently regular $f$ there is a sense in which the growth of $f'F$ and  $uf'(u)/f(u)$ at infinity  may be viewed as  H\"older conjugates;  see e.g., \cite[Remark\,1.2]{FI1} and \cite{DF1}. Thus the assumption   $f'(u)F(u)\to 1$ as $u\to\infty$ in {\bf (G)} (due to  Lemma\,\ref{lem:LHop}(iv))  can be thought of  equivalently as  $f$ having infinite growth rate as measured by $uf'(u)/f(u)$. 
\item[(c)]  For suitably regular $f$, verification of {\bf (G)} is often easier via  l'H\^opital's rule
\[
\lim_{u\to\infty} f'(u)F(u)=\lim_{u\to\infty}\frac{(f'(u))^2}{f''(u)f(u)}
=\lim_{u\to\infty}\frac{(\log f(u))'}{(\log f'(u))'},
\]
whenever the relevant limits exist.
\ei
\end{rem}

\begin{theoremC}[Critical Young's Functions] \label{thm:D}
Let    $f$  satisfy   {\bf (S)} and {\bf (G)}. %Suppose $\rho \in (0,\infty ]$.
\bi[leftmargin=2em]
\item[(a)]    
\bi[leftmargin=*]
\item[(i)] (Local well-posedness) If      $\Phi\in\mathcal{N}$  and $\Phi \gtrsim f$ then  
the conclusions of Theorem\,B(a-c)  hold.  
\item[(ii)] (Uniqueness of mild solutions) Assume the hypotheses of (a)(i) hold.  If  there exists $C>0$ such that  $|f(u)|\le C f(|u|)$ for all $u\in\R$ and 
\be\label{eq:qzero} 
\Phi \gtrsim f^r \quad\text{ as} \quad u\to 0
\ee
for some  $r\ge 1$ and  $r>n/2$, then  
uniqueness holds in the  class  $C([0,T],\Mphi )$  of    $M^{\Phi}$-mild solutions.
\item[(iii)] (Nonexistence) If    $\Phi\in\mathcal{Y}$  and $\Phi \lesssim f$ then there exists nonnegative $\vphi\in\Lphi$ for which \eqref{eq:she} has no nonnegative integral solution.
\ei
\item[(b)]   (Local and global well-posedness)
Suppose $\Phi\in\mathcal{N}$, $\Phi \gtrsim f$ 
and $f'(0)=0$. If there exists $\lam_0>0$ such that 
\be\label{eq:intCorC}
\int_0^1 x^{-\pF}f'\left(  \lam_0    \Phinv\left(x   \right)\right)    \dee x <\infty,
\ee
then the conclusions of  both Theorem\,B(a-c) and Corollary\,B (for some $\lam\in (0,\lam_0)$) hold.
\ei
\end{theoremC}

\begin{corollaryC}(Local and Global Well-posedness)
Let    $f$  satisfy   {\bf (S)} and {\bf (G)} and suppose  that  $f'$ is regularly varying at zero of index $m-1$  for some $m>\pF$.  If $\Phi\in\mathcal{N}$,  $\Phi (u)\gtrsim u^q$ as $u\to 0$ for some   $0<q<n(m-1)/2$ and $\Phi \gtrsim f$ then the conclusions of Theorem\,B(a-c) and Corollary\,B  (for some $\lam >0$) both hold. 
\end{corollaryC}

Setting $\Phi_c (u)= f(u)$ for $u\ge 0$ (noting that $\Phi_c\in\CY$ by  {\bf (S)})  we see from Theorem\,C(a)(i) that local  well-posedness of $\Mphi$-classical solutions holds if  $\Phi\in\CN$ and $\Phi \gtrsim \Phi_c$, 
while by Theorem\,C(a)(iii)  if $\Phi \lesssim \Phi_c$ then nonexistence of a nonnegative  solution in  the superspace $\Lphi$ pertains for some initial datum.
In  this  sense   $\Phi_c$ is a critical  Young's function for local well-posedness. Modulo the  equivalence relation $\approx$ the choice $\Phi_c=f$
 is asymptotically unique  at infinity. However,  there are clearly many choices of  $\Phi\in\CN$ satisfying  $\Phi\approx f$ as $u\to\infty$
 but for which the  relation does not extend  globally to $[0,\infty)$, and thus do not define equivalent Orlicz spaces (recall Remark\,\ref{rem:subspace}(b)).  
  Thus while the choice of an asymptotically critical Young's function is essentially unique, the choice as defined on  $[0,\infty)$ is  non-unique.  
  That is, there exist $\Phi_1,\Phi_2\in\CN$    such that  $\Phi_1\approx f$ and $\Phi_2\approx f$  as $u\to\infty$, but $L^{\Phi_1}\neq L^{\Phi_2} $ and therefore $M^{\Phi_1}\neq M^{\Phi_2} $ in general. 
In practice this allows one to choose from a family of Young's functions satisfying $\Phi\approx f$ at infinity  in order to meet other desirable criteria conditioned by the behaviour of $\Phi$ near zero, such as the uniqueness of mild solutions  in Theorem\,C(a)(ii) or 
 global well-posedness as per Theorem\,C(b) or Corollary\,C.
 
  By Lemma\,\ref{lem:LHop}(iii), the requirement  in Corollary\,C   that $\Phi\gtrsim f^k$  at infinity  is in fact equivalent to $\Phi\gtrsim f$. Near zero,   $\Phi \gtrsim f^k$ for  $0<k<n(m-1)/(2m)$ can be rewritten as  $\Phi(u)\gtrsim u^{q}\sigma (u)$ for  $0<q<n(m-1)/2$   and a slowly varying function $\sigma$ (where $f(u)=u^m\mu (u)$ as $u\to 0$ and $\sigma =\mu^k$).  In practice one then chooses $\Phi\in\CN$ to interpolate between these two asymptotic order relations at zero and infinity.

\begin{rem}\label{rem:thmC}  
\bi[leftmargin=2em]
\item[\ ] 
\item[(a)]  In all parts of Theorem\,C except (a)(iii), if $f$ is  also increasing   then the comparison principle of Theorem\,B(d)   holds.  
\item[(b)] By Lipschitz continuity of $f$,  any $\Phi$  satisfying 
\be\label{eq:phiinf}
\liminf_{u\to 0}\frac{\Phi (u)}{u^d}>0,\qquad d>1
\ee
also satisfies \eqref{eq:qzero} for large enough $r$. Condition \eqref{eq:phiinf}  is not a stringent one since we may take $d$ arbitrarily large, permitting $\Phi$ to be non-degeneratively small to any finite order near zero. 
In the special case that $f$ is  odd,   $|f(u)|\le C f(|u|)$ clearly holds and  uniqueness in the class of   $M^{\Phi}$-mild solutions then follows from Theorem\,C(a)(ii). We  emphasise that the unique solution itself as guaranteed by Theorem\,B(a) is an  $M^{\Phi}$-classical one.
\item[(c)]   With $\rho >0$ in {\bf (G)}, Theorem\,C(a) provides a sharp characterisation for local well-posedness.  In effect any subtle behaviour  in $f$ embodied within  the slowly varying function $\mu (u)$  (see \eqref{eq:expfintro}) is dominated by the $u^{\rho}$ term. If $\rho =0$ then the problem is more delicate and includes nonlinearities $f$ of both P-type (e.g., power law) and FP-type (e.g., quasi-polynomial). The interaction with the rate of limiting behaviour of $f'F$ is also more subtle. This case  will be addressed in a forthcoming work.
\item[(d)]   It can be seen from their proofs that only the global existence results in Theorem\,C(b) and Corollary\,C require $\ell(u)\le Cf'(u)$ to hold for all $u>0$ in {\bf (S)}. The local well-posedness results of  Theorem\,C(a) and Corollary\,C  require only that  this condition  be satisfied for $u>0$ sufficiently large. 
\ei
\end{rem}

%%%%%%%%%%%%%%%%%%%%%%%%%%%%%%%%%%%%%%%%%%%%%%%%%\section{Smoothing estimates for the heat semigroup}
\section{Proof of Theorem\,A and Corollary\,A}
%%%%%%%%%%%%%%%%%%%%%%%%%%%%%%%%%%%%%%%%%%%%%%%%%

 We establish first some basic properties of Young's functions and Orlicz spaces, including a proof in Lemma\,\ref{lem:L0M} that the spaces    $\LO$ and $ \Mphi $ (recall Definition\,\ref{def:Orlicz}) coincide whenever $\Phi$ is an $N$-function,  generalising  the special case $\Lphi =\exp L^2$ considered in \cite{IRT1}.  In Proposition\,\ref{prop:Gnorm} we  derive the key estimate for the norm of the Guassian heat kernel in an arbitrary Orlicz space and apply a generalisation of Young's inequality for convolutions in Orlicz space to obtain the relevant smoothing estimates and strong continuity of the heat semigroup in  Theorem\,A.  The $L^{\Phi}$-$L^{\infty}$ smoothing estimate of Corollary\,A  will follow as a special case. 

\subsection{Lemmata}

Recall Definition\,\ref{def:geninv} for the generalised inverse of a (possibly infinite-valued)  Young's function.

\begin{lemma}
\label{lem:concave}
If $\Phi\in{\mathcal{Y}}$  then $\Phinv(x)$ is concave for  all $x\ge 0$ and $\displaystyle{\frac{x}{\Phinv(x)}}$ is increasing for all $x> 0$. 
\end{lemma}

\begin{proof}
 By convexity of $\Phi$ and \eqref{eq:Ainv}, for all $x,y\ge 0$ and $\lam\in [0,1]$ we have
\begin{align*}
\Phi\left(   \lam \Phinv (x)+ (1-\lam) \Phinv (y)\right)& \le  \lam\Phi\left(   \Phinv (x)\right)+ 
(1-\lam) \Phi\left(  \Phinv (y)\right)\\
& \le  \lam x+ (1-\lam) y.
\end{align*}
 Then, by \eqref{eq:Ainv}, convexity of $\Phi$  and since $ \Phinv $ is increasing, it follows that
\begin{align*}
  \lam \Phinv (x)+ (1-\lam) \Phinv (y)&\le \Phinv\left(  \Phi\left(   \lam \Phinv (x)+ (1-\lam) \Phinv (y)\right)\right)\\
  & \le  \Phinv\left(\lam \Phi (\Phinv (x))+ (1-\lam)\Phi (\Phinv (y)) \right)\\
& \le \Phinv\left(   \lam x+ (1-\lam) y  \right)
\end{align*}
and $\Phinv$ is concave. Monotonicity of ${{x}/{\Phinv(x)}}$ follows immediately from this.
\end{proof}

\begin{lemma} \label{lem:L0M}
Let $\Phi\in\mathcal{Y}$. 
\bi[leftmargin=3em]
\item[(i)]  If $\Phi$ is  finite then $\LO\subseteq\Mphi $.
\item[(ii)] If $\Phi\in\mathcal{N}$  then $\LO =\Mphi $.
 \item[(iii)] If $\Phi\in{\mathcal{N}}\cap \Delta_2$  then $\LO =\Mphi=\Lphi $.
\ei
\end{lemma}

\begin{proof}
(i) Let $u\in\LO$ and $k>0$ be arbitrary. Choose a sequence $ u_j$ in $C_0^{\infty}  $ and $m\in\N$ such that $\|u_j-u\|_{\Phi}<1/(2k)$ for all $j\ge m$. By convexity of $\Phi$, 
\begin{align*}
\Is \Phi (k|u(x)|)\dee x &\le  \Is \Phi \left(\frac{1}{2}{(2k|u(x)-u_m(x)|)}+\frac{1}{2}2k|u_m(x)|\right)\dee x \\
&\le  \frac{1}{2}\Is \Phi \left(\frac{|u(x)-u_m(x)|}{\|u_m-u\|_{\Phi}}\right)\dee x+\frac{1}{2}\Is \Phi \left(2k|u_m(x)|\right)\dee x\\
&\le  \frac{1}{2}+\frac{1}{2}\Is \Phi \left(2k|u_m(x)|\right)\dee x< \infty 
\end{align*}
since $\Phi$ is  finite and $u_m\in C_0^{\infty}$.  Hence $u\in\Mphi $. 

(ii)  Let $\Phi\in\mathcal{N}$ and  $u\in\Mphi $, so that
for all $k>0$
\be\label{eq:L1allk}
\Is \Phi \left(k|u(x)|\right)\dee x<\infty .
\ee
Define the sequence $u_j$ by
\[
u_j(x)=
\begin{cases}
u(x)\chi_j (x),& {\text {if}}\quad |u(x)|<j,\\
j\chi_j (x) \text{ sgn}\, u(x), & {\text {otherwise}}.
\end{cases}
\]
Note that for all $j$, $u_j$ is bounded with bounded support.
For arbitrary $\eps >0$, 
\begin{align*}
&\Is \Phi \left(\frac{|u(x)-u_j(x)|}{\eps}\right)\dee x \\
= &\ \int_{B_j\cap \{x:|u(x)|\ge j\}} \Phi \left(\frac{|u(x)-j|}{\eps}\right)\dee x+\int_{B_j^c} \Phi \left(\frac{|u(x)|}{\eps}\right)\dee x \\
= & \ \Is \Phi \left(\frac{|u(x)-j|}{\eps}\right)\chi_{A_j} (x)+ \Phi \left(\frac{|u(x)|}{\eps}\right)\chi_{B_j^c}(x)\dee x
\end{align*}
where $B_j$ is the open Euclidean ball of radius $j$ and 
\[
A_j= \{x\in B_j :|u(x)|\ge j\}.
\]
Since $u$ is measurable, for a.e. $x\in\R^n$ and all $j\in\N$
\[
\Phi \left(\frac{|u(x)-j|}{\eps}\right)\chi_{A_j} (x)\le \Phi \left(\frac{2|u(x)|}{\eps}\right).
\]
Also, for a.e. $x\in\R^n$ we have $\chi_{A_j} (x)=0$ for all $j$ large enough and so
\[
\Phi \left(\frac{|u(x)-j|}{\eps}\right)\chi_{A_j} (x)\to 0 \qquad\text{as}\qquad j\to\infty.
\]
Likewise, for a.e. $x\in\R^n$ and all $j\in\N$, 
\[
\Phi \left(\frac{|u(x)|}{\eps}\right)\chi_{B_j^c}(x)\le  \Phi \left(\frac{|u(x)|}{\eps}\right) 
\]
and
\[
\Phi \left(\frac{|u(x)|}{\eps}\right)\chi_{B_j^c}(x)\to 0 \qquad\text{as}\qquad j\to\infty.
\]
Hence by \eqref{eq:L1allk} and  Lebesgue's dominated convergence theorem, 
\[
\Is \Phi \left(\frac{|u(x)-u_j(x)|}{\eps}\right)\dee x\to 0\qquad\text{as}\qquad j\to\infty.
\]
Thus, for $j$ sufficiently large $\|u-u_j\|_{\Phi}\le \eps$. With  $E^{\Phi}$ denoting the closure in $\Lphi$ of the set of  bounded functions in $\R^n$ having bounded support (as in \cite[8.14, p.270]{AF}), we have shown that $u\in E^{\Phi}$.  Now since $\Phi$ is an $N$-function,  by   \cite[Theorem\,8.21(d)]{AF}   $E^{\Phi}=\LO$. Thus   $u\in\LO$ and $\Mphi \subseteq\LO$.   The reverse inclusion from part (i)  yields the required result.

 (iii) For $\Phi\in{\mathcal{N}}\cap \Delta_2$  it is well-known that  $\Mphi=\Lphi $ (see e.g., \cite[Corollary, p.21]{Mal}) and the result follows by (ii).
\end{proof}

\begin{rem}\label{rem:AF} Let $\Phi\in\mathcal{Y}$ be  finite but not necessarily an $N$-function.
\bi[leftmargin=2em]
\item[(i)]  
If $u_j\in E^{\Phi} $ instead of $u_j\in C_0^{\infty}$ and $u_j\to u$  in $\Lphi$ as $j\to\infty$, then the proof in part (i) of Lemma\,\ref{lem:L0M} goes through unchanged and we deduce that $ E^{\Phi} \subseteq\Mphi $.
\item[(ii)]  The equality in Lemma\,\ref{lem:L0M}(ii) has been claimed in several  works  (e.g., \cite{BMM,FK,MOT,MT2}) without requiring that $\Phi$ be an $N$-function,  citing \cite{IRT1} (where $\Phi$ is a particular exponential $N$-function) as the source of proof. It would be interesting to see an explicit proof of this claim which does not rely on the $N$-function property, as we do here via \cite[Theorem\,8.21(d)]{AF}.
\ei
\end{rem}

The following lemma is the natural generalisation of Young's inequality for convolution integrals from  Lebesgue spaces to Orlicz spaces. The reader is reminded of the definition of $ G*\vphi$ in \eqref{eq:semigp}, where $G$ is the Gaussian heat kernel  in \eqref{eq:eg}.

\begin{lemma}\cite[Theorem~2.5]{ON}
\label{lem:oneil}
Let $\Phi,\Psi\in{\mathcal{Y}}$
and suppose there exists $\T\in{\mathcal{Y}}$ 
 such that
\be\label{eq:Tinv}
\T^{-1}(x){ \Phinv (x)}\le {x\Psinv (x) },\qquad  x\ge 0.
\ee
If $G\in \LT$ and  $\vphi\in \Lphi$ then $G*\vphi\in\Lpsi$ and
\[
  \left\|G*\vphi \right\|_{\Psi}\le 2  \left\|G \right\|_{\T}   \left\|\vphi \right\|_{\Phi}.
 \]
\end{lemma}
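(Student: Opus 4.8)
\textbf{Proof plan for Lemma~\ref{lem:oneil} (O'Neil's convolution inequality).}
Since this lemma is quoted directly from \cite[Theorem~2.5]{ON}, the natural route is to reproduce O'Neil's argument in a form adapted to the present conventions (in particular the generalised inverse of Definition~\ref{def:geninv} and the Luxemburg norm). The plan is to pass through the \emph{decreasing rearrangement} $h^{*}$ of a function $h$ and the associated maximal function $h^{**}(t)=\frac1t\int_0^t h^{*}(s)\dee s$. The key classical input is the rearrangement estimate for convolutions, $(G*\vphi)^{**}(t)\le t\,G^{**}(t)\,\vphi^{**}(t)+\int_t^\infty G^{*}(s)\vphi^{*}(s)\dee s$, valid on $\R^n$ with Lebesgue measure; from this one controls $(G*\vphi)^{*}(t)\le (G*\vphi)^{**}(t)$ by the same right-hand side. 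First I would record that, because Orlicz norms are rearrangement-invariant, it suffices to bound $\|(G*\vphi)^{*}\|$ in the Luxemburg sense by the stated product.

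The heart of the matter is to convert the pointwise rearrangement bound into the Orlicz-norm bound using the hypothesis \eqref{eq:Tinv}. Here the bridge is the representation of the Luxemburg norm through the generalised inverse: for a nonincreasing $g$ on $(0,\infty)$ one has $\|g\|_{\Psi}\approx$ the smallest $k$ making $\int_0^\infty \Psi(g(t)/k)\dee t\le 1$, and the decisive device is O'Neil's observation that $\Phi^{-1}$-type quantities interact with integrals of rearrangements through inequalities like $\int_0^t \vphi^{*}(s)\dee s\le t\,\vphi^{**}(t)$ together with the Young-type bound $ab\le \Phi(a)+\Phi^{*}(b)$ and the estimate $\Phi^{-1}(x)\Phi^{*\,-1}(x)\le 2x$ of \eqref{eq:property1.6}. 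Concretely: one estimates each of the two terms $t\,G^{**}(t)\vphi^{**}(t)$ and $\int_t^\infty G^{*}(s)\vphi^{*}(s)\dee s$ separately. For the first term, $G^{**}$ is controlled by $\|G\|_{\T}\,\T^{-1}(1/t)/\text{(something)}$ and $\vphi^{**}$ by $\|\vphi\|_{\Phi}\,\Phi^{-1}(1/t)$, after which \eqref{eq:Tinv} in the form $\T^{-1}(x)\Phi^{-1}(x)\le x\Psi^{-1}(x)$ is exactly what collapses the product into $\|G\|_{\T}\|\vphi\|_{\Phi}\,\Psi^{-1}(1/t)$, i.e. into something whose $\Psi$-norm (as a function of $t$) is finite. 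The second term is handled by a dual splitting of the integral $\int_t^\infty$ at the level of the ``conjugate'' variable, again invoking \eqref{eq:property1.6} to pair $\Phi^{-1}$ with $\Phi^{*\,-1}$; summing the two contributions yields the factor $2$ in $\|G*\vphi\|_{\Psi}\le 2\|G\|_{\T}\|\vphi\|_{\Phi}$.

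I expect the main obstacle to be purely bookkeeping rather than conceptual: namely tracking the generalised inverses of \emph{possibly extended-valued} Young functions (types (II) and (III) of Definition~\ref{def:Young}) through the rearrangement estimates, since $\Phi^{-1}$ may be bounded and $\Phi^{**}$-type maximal functions must be handled carefully near $t=0$ and $t=\infty$; one must check that the inequalities used (e.g. $\Phi(\Phi^{-1}(x))\le x$, monotonicity and concavity of $\Phi^{-1}$ from Lemma~\ref{lem:concave}, and \eqref{eq:property1.6}) all remain valid in the extended-valued regime, which they do. A secondary technical point is to verify that $G*\vphi$ is well defined and finite a.e. under the hypotheses — this follows because the right-hand side of the rearrangement inequality is finite for a.e. $t$ whenever $G\in\LT$ and $\vphi\in\Lphi$ and \eqref{eq:Tinv} holds, so $(G*\vphi)^{*}(t)<\infty$ for a.e.\ $t$ and hence $G*\vphi\in\Lpsi$ with the asserted bound. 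Since the statement is cited verbatim from \cite{ON}, it is legitimate simply to refer to \cite[Theorem~2.5]{ON} for the full details, noting only that the hypothesis \eqref{eq:Tinv} is precisely O'Neil's compatibility condition between the three Young functions.
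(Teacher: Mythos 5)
The paper does not prove this lemma: it is stated as a direct citation of \cite[Theorem~2.5]{ON} and used as a black box, so there is no in-paper proof against which to compare your sketch. You correctly identify this, and your concluding remark that one may simply defer to O'Neil is exactly what the authors do.

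That said, as a reconstruction of O'Neil's argument your sketch has a genuine gap. You estimate the first term of the rearrangement bound as
\[
t\,G^{**}(t)\,\vphi^{**}(t)\;\le\;\|G\|_{\T}\,\|\vphi\|_{\Phi}\;t\,\T^{-1}(1/t)\,\Phi^{-1}(1/t)\;\le\;\|G\|_{\T}\,\|\vphi\|_{\Phi}\,\Psi^{-1}(1/t),
\]
and then assert that $t\mapsto \Psi^{-1}(1/t)$ has finite Luxemburg $\Psi$-norm. That is false in general: for $\Psi(x)=x^{p}$ one gets $\Psi^{-1}(1/t)=t^{-1/p}$, which lies in weak-$L^{p}$ but not in $L^{p}=L^{\Psi}$, and the same failure persists for typical $N$-functions (the integral $\int_0^\infty \Psi(\Psi^{-1}(1/t)/k)\,\dee t$ diverges at both endpoints for every $k>0$). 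This is precisely why the O'Neil rearrangement inequality $(G*\vphi)^{**}(t)\le t\,G^{**}(t)\vphi^{**}(t)+\int_t^\infty G^{*}\vphi^{*}$ is the natural tool for Lorentz-space (weak-type) refinements, not for a direct strong-norm estimate: you cannot bound the two terms separately in $\|\cdot\|_\Psi$, and the cancellation you would need between them is not something your sketch supplies. O'Neil's actual proof of \cite[Theorem~2.5]{ON} does not proceed by rearrangement term-by-term in this way; it rests on a generalised H\"older inequality for Orlicz spaces combined with the compatibility condition \eqref{eq:Tinv} at the level of the double integral $\iint |G(x-y)|\,|\vphi(y)|\,|h(x)|\,\dee y\,\dee x$ against a dual test function $h$. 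If you wish to present a proof rather than cite, you should take that route (or otherwise handle the rearrangement bound as a whole), because the step "collapses the product into $\Psi^{-1}(1/t)$, whose $\Psi$-norm is finite" is where the argument breaks.
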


\begin{rem}\label{rem:Jod} In \cite[Theorem~4.1]{Jod} it is shown that \cite[Theorem~2.5]{ON} holds under weaker  assumptions such that the class of Young's functions ${\mathcal{Y}}$ can be replaced by `$Y$-functions'. A   $Y$-function $\Phi$ is one having the same properties as a Young's function, except the requirement that $\Phi$ be convex and instead requiring only that  $\Phi(x)/x$ be increasing. In fact the work in \cite{Jod} shows that many results in  Orlicz spaces persist on replacing  Young's functions with  $Y$-functions. The solvability of \eqref{eq:Tinv} for $\T\in{ {Y}}$  and its necessity for the convolution estimate in \cite[Theorem~2.5]{ON} were also considered in \cite{Jod} in the larger class of $Y$-functions.
\end{rem}

\subsection{Orlicz Norm Estimate of the Gaussian Heat Kernel}

In order to apply Lemma\,\ref{lem:oneil}  we require an estimate for $\left\|G \right\|_{\T}$.

\begin{prop} \label{prop:Gnorm}
Let $\T\in\CY$ 
and $G$ be the Gaussian heat kernel in \eqref{eq:eg}. 
There exists a constant $C =C (n,\T)>0$  such that for all $ t>0$, 
\be
 \left\|G(\cdot ,t) \right\|_{\T}   \le  \frac{  Ct^{-\frac{n}{2}} }{\T^{-1}(t^{-\frac{n}{2}})}.
    \label{eq:gaussnorm}
 \ee
\end{prop}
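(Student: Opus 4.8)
The plan is to compute the Luxemburg norm $\|G(\cdot,t)\|_{\T}$ directly from its definition \eqref{eq:Lux}, i.e. to find the value of $k$ for which $\int_{\R^n}\T\bigl(G(x,t)/k\bigr)\dee x = 1$ (or to bound it above by $1$ for a suitable $k$ of the asserted order). The natural candidate, suggested by the right-hand side of \eqref{eq:gaussnorm}, is
\[
k = \frac{C t^{-\frac{n}{2}}}{\T^{-1}(t^{-\frac{n}{2}})}
\]
for a sufficiently large constant $C=C(n,\T)$. The idea is that $G(0,t)=(4\pi t)^{-n/2}\approx t^{-n/2}$ is the peak value of the kernel, so $G(x,t)/k \approx \T^{-1}(t^{-n/2})$ near the peak, whence $\T(G(x,t)/k)$ is of order $t^{-n/2}$ there; since the Gaussian has effective support of measure of order $t^{n/2}$, the integral should be of order one. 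Making this precise is the content of the proof.

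First I would substitute the explicit form \eqref{eq:eg} of $G$ and perform the change of variables $x = t^{1/2}y$, which turns $\int_{\R^n}\T\bigl(G(x,t)/k\bigr)\dee x$ into $t^{n/2}\int_{\R^n}\T\bigl(c_n t^{-n/2} e^{-|y|^2/4}/k\bigr)\dee y$ with $c_n=(4\pi)^{-n/2}$. With the choice of $k$ above this is
\[
t^{n/2}\int_{\R^n}\T\!\left(\frac{c_n}{C}\,\T^{-1}\!\bigl(t^{-n/2}\bigr)\,e^{-|y|^2/4}\right)\dee y.
\]
Now I would exploit that $\T^{-1}$ is concave with $\T^{-1}(0)=0$ (Lemma~\ref{lem:concave}(i), noting $\Phinv(0)=0$ since $\Phi(0)=0$ — or more simply that $s\mapsto \T^{-1}(s)/s$ is nonincreasing by Lemma~\ref{lem:concave}(ii)): for $0<\theta\le 1$ one has $\T^{-1}(\theta s)\le \theta^{?}\dots$ — actually the cleaner route is to use convexity of $\T$. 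Since $\T$ is convex with $\T(0)=0$, the map $r\mapsto \T(r)/r$ is nondecreasing, so $\T(\mu r)\le \mu\,\T(r)$ for all $\mu\in[0,1]$ and $r\ge 0$. Taking $\mu = (c_n/C)e^{-|y|^2/4}\le 1$ (which holds once $C\ge c_n$) and $r=\T^{-1}(t^{-n/2})$, and using $\T(\T^{-1}(s))\le s$ from \eqref{eq:Ainv}, we get
\[
\T\!\left(\frac{c_n}{C}\,\T^{-1}\!\bigl(t^{-n/2}\bigr)e^{-|y|^2/4}\right)\le \frac{c_n}{C}\,e^{-|y|^2/4}\,\T\!\bigl(\T^{-1}(t^{-n/2})\bigr)\le \frac{c_n}{C}\,e^{-|y|^2/4}\,t^{-n/2}.
\]
Substituting back, the whole integral is bounded by $\frac{c_n}{C}\int_{\R^n}e^{-|y|^2/4}\dee y = \frac{c_n}{C}(4\pi)^{n/2}$, which is $\le 1$ provided $C = C(n)$ is chosen large enough (e.g. $C = c_n(4\pi)^{n/2} = 1$, or any larger value, though we also need $C\ge c_n$, giving $C=1$). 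By the definition \eqref{eq:Lux} of the Luxemburg norm this yields $\|G(\cdot,t)\|_{\T}\le k = C t^{-n/2}/\T^{-1}(t^{-n/2})$, which is \eqref{eq:gaussnorm}.

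The main point requiring care is the subhomogeneity bound $\T(\mu r)\le \mu\T(r)$ for $\mu\in[0,1]$ and the validity of \eqref{eq:Ainv}, both of which are already recorded in the excerpt (Remark~\ref{rem:Young}(e) and the convexity in Definition~\ref{def:Young}); a subtlety is the case where $\T$ is an extended (type (II) or (III)) Young's function, where $\T^{-1}$ is bounded by $x_\infty^{\T}$ (Remark~\ref{rem:Young}(c)) and one must check that the argument $\tfrac{c_n}{C}\T^{-1}(t^{-n/2})e^{-|y|^2/4}$ stays in the finite-valued region of $\T$ — but since $\T^{-1}(t^{-n/2})\le x_\infty^{\T}$ and the prefactor is $\le 1$, the argument never exceeds $x_\infty^{\T}$, so $\T$ applied to it is at worst $\T(x_\infty^{\T})$; one should handle type (II) directly ($\T=\infty$ beyond $x_\infty^{\T}$, but we never reach it strictly) and note that in the type (III) borderline the monotone-limit bound $\T(\T^{-1}(s))\le s$ still holds. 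So I expect the only real obstacle is bookkeeping across the three types of Young's functions, not any genuine analytic difficulty; the homogeneity/concavity estimate is the crux and it is short.
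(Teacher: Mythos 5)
Your proof is correct, and it takes a genuinely different — and in fact cleaner and more elementary — route than the paper. The paper passes to polar coordinates, substitutes $y=\kappa t^{-n/2}\e^{-r^2/4t}/k$, and then splits into cases according to the parity of $n$, carrying out two separate inductions on the quantity $\int_0^a \frac{\T(y)}{y}\bigl(\log(a/y)\bigr)^{j-\text{const}}\dee y$ using the pointwise inequality $\T(y)/y\le \T'(y)$; it then has to perform a further careful manipulation involving $x/\T^{-1}(x)$ being increasing to extract the claimed bound, and a separate check that the candidate $k_0(t)$ stays above $\hat x^{\T}\kappa t^{-n/2}$ when $\T$ is of extended type. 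You instead make the parabolic scaling $x=t^{1/2}y$ and reduce everything to two facts: the subhomogeneity $\T(\mu r)\le\mu\T(r)$ for $\mu\in[0,1]$ (convexity plus $\T(0)=0$) and the generalised-inverse inequality $\T(\T^{-1}(s))\le s$ from \eqref{eq:Ainv}. The $t^{n/2}$ Jacobian cancels the $t^{-n/2}$ coming from $\T(\T^{-1}(t^{-n/2}))\le t^{-n/2}$, and the remaining Gaussian integral is $(4\pi)^{n/2}=c_n^{-1}$, so the bound closes with $C=1$, independent of $\T$ and of the parity of $n$. Your handling of the extended cases is also simpler: since $\T^{-1}(s)\le x_\infty^{\T}$ always (Remark~\ref{rem:Young}(c)) and your prefactor is $\le 1$, the argument of $\T$ never leaves the finite-valued region, so the finiteness restriction on $k$ in the Luxemburg infimum is automatic and no separate verification of the kind in \eqref{eq:k02} is needed. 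What your approach buys: no case split on $n$, no induction, no use of the derivative $\T'$, and a better (in fact universal) constant. What the paper's approach might buy in exchange: the intermediate one-dimensional integral identities \eqref{eq:Ij} and \eqref{eq:Jj} could be of independent use, but they are not needed for this statement. In short, you have found a shorter and more robust proof of the same inequality.
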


\begin{proof}
First recall from Definition~\ref{def:Young} that $x_{\infty}^{\T}\in (0,\infty]$ is the largest value of $x$ such that 
$\T$ is finite-valued on $[0,x)$. We set $\hat{x}^{\T}=1/x_{\infty}^{\T}$. 

Since $G(\cdot,t)$ is radially symmetric and decreasing in $|x|$,   from \eqref{eq:Lux} we have  
\begin{align}
  \left\|G(\cdot ,t) \right\|_{\T}&=\inf\left\{  k>0\ :\   \Is \T (G(x,t)/k)\dee x \le 1 \right\}\nonumber\\
  &=\inf\left\{  k> (4\pi t)^{-\frac{n}{2}}{\hat{x}^{\T}}\ :\   \Is \T (G(x,t)/k)\dee x \le 1 \right\}\label{eq:GG}
 \end{align}
since we necessarily restrict to those $k$ for which $\T (G(x,t)/k)$ is finite a.e. 
Using spherically radial coordinates and setting  $\kappa =(4\pi)^{-n/2}$  we have  
\begin{align}
 \Is \T (G(x,t)/k)\dee x &= C\int_0^{\infty}\T ((4\pi t)^{-\frac{n}{2}} \e^{-r^2/4t}/k)r^{n-1}\dee r\nonumber\\
&= C\tn \int_0^{\kappa   \tmn /k}\frac{\T (y)}{y}\left(  \log \left( \frac{\kappa \tmn}{ky}\right)\right)^{\frac{n}{2}-1}\dee y,
\label{eq:cov}
\end{align}
after  the second  change of variable $y=\kappa t^{-\frac{n}{2}} \e^{-r^2/4t}/k$ to obtain \eqref{eq:cov}. 
We now wish to estimate the integral on the right hand side of \eqref{eq:cov}. To do this we will consider the cases of $n$  even or $n$ odd  separately and proceed by induction in each case. 

Let $a\in (0,x_{\infty}^{\T})$ be fixed. Consider first the case of $n$ even, setting  $n=2j$. We claim that for all $j\in\N$,
\be\label{eq:Ij}
I_j(a):=\int_0^{a}\frac{\T (y)}{y}\left(  \log \left( \frac{a}{y}\right)   \right)^{j-1}\dee y\le (j-1)!\, \T (a).
\ee
By Remark\,\ref{rem:Young}(a),
%\be\label{eq:convex}
\[\frac{\T (y)}{y}\le \T '(y),\qquad a.e.\  y\in (0,x_{\infty}^{\T}).
\]
Hence,
\[
I_1(a)=\int_0^{a}\frac{\T (y)}{y}\dee y \le      \int_0^{a}{\T' (y)}\dee y= \T (a).
\]
Now suppose that \eqref{eq:Ij} holds for some $j\in\N$. Then,
\begin{align*}
 I_{j+1}(a) &=  \int_0^{a}\frac{\T (y)}{y}\left(  \log \left( \frac{a}{y}\right)\right)^{j}\dee y\\
&\le    \int_0^{a}{\T' (y)}\left(  \log \left( \frac{a}{y}\right)   \right)^{j}\dee y\\
&=  \left[\T (y)\left(  \log \left( \frac{a}{y}\right)   \right)^{j} \right]_0^{a}+j \int_0^{a}\frac{\T (y)}{y}\left( \log \left( \frac{a}{y}\right)   \right)^{j-1}\dee y\\
&=   j\, I_j(a)\le j(j-1)!\, \T(a)= j!\, \T (a),
\end{align*}
using the fact that $\T (y)\le C y$ as $y\to 0$ on the penultimate line since $\T\in\CY$. By induction 
\eqref{eq:Ij} holds for all $j\in\N$.

For $k>{\hat{x}^{\T}} \kappa t^{-\frac{n}{2}}$ set $a=\kappa \tmn /k\in (0,x_{\infty}^{\T})$.  For $n$ even  it follows from \eqref{eq:cov} and \eqref{eq:Ij}  that  
 \[\Is \T (G(x,t)/k)\dee x \le C \left(\frac{n}{2}-1\right)!\, \tn \T (\kappa \tmn /k)=C\tn \T (\kappa \tmn /k).
\]

Next we consider the case of $n$ odd and let  $n=2j-1$. We claim that for all $j\in\N$, there exists a constant $C_j>0$ (independent of $a$) such that
\be\label{eq:Jj}
J_j(a):=\int_0^{a}\frac{\T (y)}{y}\left(  \log \left( \frac{a}{y}\right)   \right)^{j-3/2}\dee y\le C_j\T (a).
\ee 
First, we have
\begin{align*}
J_1(a) &= \int_0^{\a}\frac{\T (y)}{y}\left(  \log \left( \frac{\a}{y}\right)   \right)^{{-1/2}}\dee y \\
&\le
   \int_0^{\a/2}{\T' (y)}\left(  \log \left( \frac{\a}{y}\right)   \right)^{{-1/2}}\dee y
-2\int_{\a/2}^{\a}\T (y)\left(\left(  \log \left( \frac{\a}{y}\right)   \right)^{{1/2}}\right)'\dee y\\
&\le \begin{multlined}[t] \left(\log 2   \right)^{{-1/2}}{\T (\a/2)}-2\left[\T (y)\left(  \log \left( \frac{\a}{y}\right)\right)^{{1/2}} \right]_{\a/2}^{\a}\\
+2 \int_{\a/2}^{\a}{\T' (y)}\left(  \log \left( \frac{\a}{y}\right)   \right)^{{1/2}}\dee y\end{multlined}\\
&\le  \left(\log 2   \right)^{{-1/2}}{\T (\a/2)}+2\left(\log 2   \right)^{{1/2}}\T (a/2) +2 \left(\log 2   \right)^{{1/2}}(\T (\a)-\T (\a/2))\\
& \le  C_1{\T (\a)}.
\end{align*}
Now suppose that \eqref{eq:Jj} holds for some $j\in\N$. Using { exactly} the same reasoning as for $I_{j+1}$, we have
\begin{align*}
 J_{j+1}(a) &=  \int_0^{a}\frac{\T (y)}{y}\left(  \log \left( \frac{a}{y}\right)\right)^{j-1/2}\dee y\\
&\le    \int_0^{a}{\T' (y)}\left(  \log \left( \frac{a}{y}\right)   \right)^{j-1/2}\dee y\\
&=  \left[\T (y)\left(  \log \left( \frac{a}{y}\right)   \right)^{j-1/2} \right]_0^{a}+\left(j-\frac{1}{2}\right) \int_0^{a}\frac{\T (y)}{y}\left( \log \left( \frac{a}{y}\right)   \right)^{j-3/2}\dee y\\
&=   \left(j-{1}/{2}\right) J_j(a)\le \left(j-{1}/{2}\right) C_j \T (a)\\
&=:C_{j+1}\T(a),
\end{align*}
again using the convexity  of $\T$ near zero. By induction, 
\eqref{eq:Jj} holds for all $j\in\N$.

As with the even case above,  for $k>{\hat{x}^{\T}} \kappa t^{-\frac{n}{2}}$ and  $a=\kappa \tmn /k$ it follows from \eqref{eq:cov} and \eqref{eq:Jj}
 that for $n$ odd 
%\be\label{eq:odd}
 \[\Is \T (G(x,t)/k)\dee x \le C\tn \T (\kappa \tmn /k).
\]
Hence, recalling \eqref{eq:GG}, we have for any $n\in\N$
\begin{align*}
  \left\|G(\cdot ,t) \right\|_{\T}&=\inf\left\{  k>{\hat{x}^{\T}}\kappa t^{-\frac{n}{2}}\ :\   \Is \T (G(x,t)/k)\dee x \le 1 \right\}\\
  &\le  \inf\left\{ k>{\hat{x}^{\T}}\kappa t^{-\frac{n}{2}}\ :\  C\tn \T (\kappa \tmn /k)  \le 1 \right\}.
 \end{align*}

Now let $C_0 \ge\max\{ 1,C\}$ and set
\be\label{eq:k01}
k_0(t):=\frac{ \kappa C_0 t^{-\frac{n}{2}}  }{\T^{-1}(t^{-\frac{n}{2}})}.
\ee
By  Lemma\,\ref{lem:concave}  $x/\T^{-1}(x)$ is increasing, and so
\begin{align*}
 & k_0(t)\ge \frac{ \kappa C_0  \left(C_0 ^{-1}t^{-\frac{n}{2}}\right)  }{\T^{-1}(C_0 ^{-1}t^{-\frac{n}{2}})}
=\frac{\kappa t^{-\frac{n}{2}}}{\T^{-1}(C_0 ^{-1}t^{-\frac{n}{2}})}\\
 \Rightarrow\   &  \kappa t^{-\frac{n}{2}}/k_0(t)\le \T^{-1}(C_0 ^{-1}t^{-\frac{n}{2}})\\
 \Rightarrow\    & \T\left(\kappa t^{-\frac{n}{2}}/k_0(t)\right)\le \T\left(\T^{-1}(C_0 ^{-1}t^{-\frac{n}{2}})\right)\le  C_0 ^{-1}t^{-\frac{n}{2}}\\
 \Rightarrow\   &  Ct^{\frac{n}{2}}\T\left(\kappa t^{-\frac{n}{2}}/k_0(t)\right)\le C_0 t^{\frac{n}{2}}\T\left(\kappa t^{-\frac{n}{2}}/k_0(t)\right)\le 1,
\end{align*}
where we have used   
  \eqref{eq:Ainv} in the penultimate implication. If we can also show that 
  \be
  k_0(t)>{\hat{x}^{\T}}\kappa t^{-\frac{n}{2}},\label{eq:k02}
  \ee
then it will follow that $ \left\|G(\cdot ,t) \right\|_{\T}\le k_0(t)$. 

If $\T$ is finite then ${\hat{x}^{\T}}=0$ and \eqref{eq:k02} clearly holds. If $\T$ is not finite then ${\hat{x}^{\T}}\in (0,\infty)$ 
and so by Remark\,\ref{rem:Young}(c),  $\Tinv$ is uniformly bounded above. Choosing $C_0$ large enough in \eqref{eq:k01} then ensures that \eqref{eq:k02}  holds. We deduce that
\[
 \left\|G(\cdot ,t) \right\|_{\T}\le k_0(t)=\frac{ \kappa C_0 t^{-\frac{n}{2}}  }{\T^{-1}(t^{-\frac{n}{2}})}, 
\]
yielding \eqref{eq:gaussnorm}.
\end{proof}

%%%%%%%%%%%%%%%%%%%%%%%%%%%%%%%%%%%%%%%%%%%%
\subsection{Proof of Theorem\,A: $C_0$-semigroup and Smoothing}
%%%%%%%%%%%%%%%%pfAA%%%%%%%%%%%%%%%%%%%%%%%%%%%%

\begin{proof} 
 (a)(i)  We need only show the boundedness estimate. Taking $\Psi=\Phi$ and $\T (x)=x=\Tinv (x)$ in \eqref{eq:Tinv}   of Lemma\,\ref{lem:oneil},  
we have $G\in L^1 =\LT$ and
\[
\left\|S(t)\vphi \right\|_{\Phi}= \left\|G*\vphi \right\|_{\Phi}\le 2\|G\|_1 \left\|\vphi \right\|_{\Phi}=2 \left\|\vphi \right\|_{\Phi}.
\]

(a)(ii) First note that  the embedding $L^1 \cap\Linf\hookrightarrow\Lphi$ (recall Example\,\ref{eg:Young}(d)) is continuous; see e.g., eqn. (2) in the proof of \cite[Theorem 12.1(c)]{Mal}. This is easy to see when $\Phi(1)$ is finite; if $\Phi(1)=\infty$, so that $x_{\infty}^{\Phi}\in (0,1]$, then one may dilate $\Phi$ to $\Phi_{\lam}$ such that  $x_{\infty}^{\Phi_{\lam}}>1$ and $\Phi_{\lam}(1)$ is finite. Since the norms  $\|\cdot\|_{\Phi}$ and $\|\cdot\|_{\Phi_{\lam}}$ are equivalent, the result follows from the previous case.

For any $\vphi\in\LO$, choose a sequence 
$\vphi_j$ in $C_0^{\infty} $ such that $\vphi_j\to\vphi$ in $\Lphi$ as  $j\to\infty$. 
Using the embedding above we have, for all $j\in\N$,
 \begin{align*}
 \|S(t)\vphi -\vphi\|_{\Phi}&\le \|S(t)(\vphi -\vphi_j)\|_{\Phi}+\|S(t)\vphi_j -\vphi_j\|_{\Phi}+\|\vphi_j -\vphi\|_{\Phi}\\
 &\le \begin{multlined}[t] 2\|\vphi -\vphi_j\|_{\Phi}+C\left(\|S(t)\vphi_j -\vphi_j\|_{1}+\|S(t)\vphi_j -\vphi_j\|_{\infty}\right)\\
+
 \|\vphi_j -\vphi\|_{\Phi}.\end{multlined}
 \end{align*} 
Since $\vphi_j\in C_0^{\infty} $ and $S(t)$ is strongly continuous in $L^1 $, we have 
\[
\limsup_{t\to 0}\|S(t)\vphi -\vphi\|_{\Phi}\le 3\|\vphi -\vphi_j\|_{\Phi}.
\]
Letting $j\to\infty$  yields the result.

(a)(iii) Let  $\Phi\in\mathcal{N}$ and $\vphi\in\Mphi $. For all $k>0$,
 $\Phi (k|\vphi|)\in L^1 $  so by Jensen's inequality, Fubini's theorem and standard properties of $G$ we have
\begin{align*}
\Is \Phi (k|S(t)\vphi|)\dee x & \le \Is \Phi (S(t)(k|\vphi|))\dee x\le \Is S(t)(\Phi (k|\vphi|))\dee x\\
& = \Is \Phi (k|\vphi|)\dee x<\infty .
\end{align*}
Hence $S(t)\vphi\in\Mphi$ for all $t\ge 0$. By (a)(i) $S(t)$ is a  continuous semigroup on $\Mphi$. 
By Lemma\,\ref{lem:L0M}, $\LO=\Mphi$  so by (a)(ii) $S(t)$ is strongly continuous on $\Mphi$. Thus $S(t)$ is a $C_0$-semigroup on $\Mphi$.

(b)(i)  Take $\Theta$ as in \eqref{eq:cc} so that for some $\beta\in (0,1]$,
\be\label{eq:tpp}
\frac{x}{\Tinv (x)}\le \frac{\Phinv(x)}{\beta \Psinv(x)},\qquad x>0.
\ee
The smoothing estimate \eqref{eq:smoothing1} now follows immediately from Lemma\,\ref{lem:oneil}, Proposition\,\ref{prop:Gnorm} and \eqref{eq:tpp}.

(b)(ii) 
Set 
\[
\gamma (t)=
\frac{\Psinv(t^{-\frac{n}{2}})}{{\Phinv(t^{-\frac{n}{2}})}},
\qquad t>0.
\]
By assumption $\gamma (t)\to 0$ as $t\to 0$.  
Let $\K$ be any compact subset of $\LO$. We wish to show that  for all $\eps >0$  there exists $T_{\K}>0$
 such that  for all $\vphi\in\K$ and  $t\in (0,T_{\K})$, 
\[
\gamma (t)\left\|S(t)\vphi \right\|_{\Psi}<\eps .
\]
We argue by contradiction and suppose  there exist  $\eps>0$ and  sequences $\vphi_j\in\K$
and $t_j\to 0$ as  $j\to\infty$ such that 
\be\label{eq:psij} 
\gamma (t_j)\left\|S(t_j)\vphi_j \right\|_{\Psi}\ge \eps
\ee
for all $j\in\N$. Since $\K$ is compact there exists   $\vphi\in\K$ and a subsequence (which we continue to label  $\vphi_j$) such that $\vphi_j\to\vphi$  in $\Lphi$ as $j\to\infty$. For any $s>0$ we then have, by (a)(i) and (b)(i), 
\begin{align*}
\gamma (t_j)\left\|S(t_j)\vphi_j \right\|_{\Psi}& \le  \gamma (t_j)\left\|S(t_j)(\vphi_j - S(s)\vphi  ) \right\|_{\Psi}+\gamma (t_j)\left\|S(s)S(t_j) \vphi  \right\|_{\Psi}\\
& \le  \C\left\| S(s)\vphi -\vphi_j   \right\|_{\Phi}+\C\gamma (t_j)\gamma(s)^{-1}\left\|S(t_j)\vphi  \right\|_{\Phi}\\
& \le  \C\left\| S(s)\vphi -\vphi \right\|_{\Phi}+\C\left\| \vphi -\vphi_j  \right\|_{\Phi}+2\C\gamma (t_j)\gamma(s)^{-1}\left\|\vphi  \right\|_{\Phi}.
\end{align*}
Hence
\[
\limsup_{j\to \infty} \gamma (t_j)\left\|S(t_j) \vphi_j \right\|_{\Psi} \le  \C\left\| S(s)\vphi-\vphi  \right\|_{\Phi}.
\]
Letting $s\to 0$ and again using the strong continuity of $S(t)$ from (a)(ii) (since  $\vphi\in\LO$) yields the required contradiction to \eqref{eq:psij}.
\end{proof}

%%%%%%%%%%%%%%%%%%%%%%%%%%%%%%%%%%%%%%%%%%%%
\subsection{Proof of Corollary\,A: $L^{\infty}$-smoothing}
%%%%%%%%%%%%%%%%pfAA%%%%%%%%%%%%%%%%%%%%%%%%%%%%

\begin{proof} 
Let $\Phi_2^{\infty}$ be as in Example\,\ref{eg:Young}(a) and let  $\Psi =\Phi_2^{\infty}$, so that  $\Psinv (x)\equiv  2$. Choosing  $\T=\Phi^*$, we see from \eqref{eq:property1.6} that \eqref{eq:cc} is satisfied with $\beta =1/2$. Recalling  \eqref{eq:phi2norm},  Theorem\,A(b)(i)  then yields part (i) of the  corollary.  Remark\,\ref{rem:Young}(c) and Theorem\,A(b)(ii)   yield part (ii). 
\end{proof}

%%%%%%%%%%%%%%%%%%%%%%%%%%%%%%%%%%%%%%%%%%%%%%%%%
\section{Proof of Theorem\,B and Corollary\,B}
%%%%%%%%%%%%%%%%%%%%%%%%%%%%%%%%%%%%%%%%%%%%%%%%%
 We prove the local and global existence of solutions via a contraction mapping argument in the spirit  of  \cite[Theorem\,4]{W80} but here within the context of Orlicz spaces and without \emph{a priori} growth restrictions on the  nonlinearity.   Theorem\,A(a)(iii) and Corollary\,A(ii) play an important role.
We establish  uniqueness and continuous dependence of $M^{\Phi}$-classical solutions of \eqref{eq:she} by following a similar strategy to that in \cite{LS20} for polynomially bounded nonlinearities and initial data in $L^1 $. To do this we first show that the solutions constructed via the contraction mapping argument in Theorem\,B(a)   depend  continuously on their initial data in $M^{\Phi}$. By uniformity of their existence time for initial data in compact subsets $\K$ of $M^{\Phi}$ we obtain  the restriction $\U_{\K}(t)\:\K\!\to\!\Mphi$ whose continuity properties  allow us to deduce the uniqueness of \emph{any} $M^{\Phi}$-classical solution. Since any solution from Theorem\,B(a) is itself an $M^{\Phi}$-classical solution, it is {\em the} $M^{\Phi}$-classical solution, yielding Theorem\,B(b)  and  simultaneously inheriting the continuous dependence property of Theorem\,B(c).  
For increasing $f$, we construct $M^{\Phi}$-classical solutions via a monotone iteration method which preserves the ordering of solutions for ordered initial data. The limiting solutions therefore inherit this ordering and a  comparison principle holds for the solutions constructed in this way.    By uniqueness, the comparison principle follows for  {\em any} $M^{\Phi}$-classical solutions, giving Theorem\,B(d).  Finally, we show that the contraction mapping argument of Theorem\,B(a) can be adapted to obtain   global-in-time solutions in Corollary\,B.

%%%%%%%%%%%%%%%%%%%%%%%%%%%%%%%%%%%%%%%%%%%%%%%%%
\subsection{Proof of Theorem\,B(a): Uniform Local Existence  }
%%%%%%%%%%%%%%%%%%%%%%%%%%%%%%%%%%%%%%%%%%%%
\begin{proof} 
Suppose $\Phi\in\mathcal{N}$ 
  and $f$ satisfies {\bf (L)} and \eqref{eq:IC}. Let $\K$ be   any compact set  of initial data in $\Mphi$.  
 By Corollary\,A(ii)  there exists $\tau_{\K}>0$ such that for all $\psi\in\K$, 
 \be\label{eq:Bbound}
 \left\| 2S(s)|\psi |\right\|_{\infty}\le \lam \Phinv\left(s^{-\frac{n}{2}}\right),\qquad s\in (0,\tau_{\K} ).
 \ee
 Now let $\vphi\in\K$ and set 
 \be\label{eq:2S}
 w(t):=2S(t)|\vphi|, \qquad t\ge 0.
 \ee
 For $T\in (0,\tau_{\K})$ define 
\be\label{eq:defXphi}
X_{\vphi}=\left\{ u\in L^{\infty}\left( (0,T ), \Mphi\right)\ :\ |u(t)|\le w(t),\  t\in (0,T)\right\}
\ee
and endow $X_{\vphi}$ with the induced metric 
 \[
 d(u,v)=\sup_{t\in (0,T)}\|u(t)-v(t)\|_{\Phi},\qquad u,v\in X_{\vphi}.
 \]
Clearly the zero function is in $X_{\vphi}$, so $X_{\vphi}$ is  non-empty.  As in the more familiar $L^q$ case, $X_{\vphi}$ is a closed subset of the Banach space $L^{\infty}\left( (0,T ), \Mphi\right)$ and as such $(X_{\vphi},d)$  is a non-empty  complete metric space. To see this, 
 let $u_j\in X_{\vphi}$  be any  convergent sequence  in  $L^{\infty}\left( (0,T ), \Mphi\right)$ with limit $u$. Then  for a.e. $t\in (0,T)$, $u_j(t)$ converges in $\Mphi$ to $u(t)$. As in the textbook proof that $L^p $ is complete where one shows the a.e. convergence of a subsequence, the same method is used when proving $\Lphi$ or $\Mphi$ is complete (see e.g., \cite[Proposition\, 1.18]{Leo} for a simple proof). Hence,   passing to a subsequence if necessary, $u_j(x, t )$ converges pointwise a.e. to the same limit; i.e., $u_j(x,t)\to  u(x,t)$ a.e. as $j\to\infty$.  Since $|u_j|\le w$ for all $j$, it follows that $|u|\le w$. Thus $X_{\vphi}$ is closed in $L^{\infty}\left( (0,T ), \Mphi\right)$.

Recalling  \eqref{eq:VoC} we  show first that $\F$ maps  $X_{\vphi}$ into itself for small enough $T$, so let $u\in X_{\vphi}$. By definition of $\ell$ (recall \eqref{eq:fellplus}), \eqref{eq:Bbound} and \eqref{eq:2S}, we have for $s\in (0,T)$
\begin{equation}\label{eq:fLw}
|f(u(s))|\le |u(s)|\ell (w(s)) \le  \ell\left(\left\|w(s)\right\|_{\infty}\right)w(s) \le \ell\left(\lam \Phinv\left(s^{-\frac{n}{2}}\right)\right)w(s),
\end{equation}
so that
\begin{align}
|\F (u)| & \le   S(t)|\vphi|+ \It S(t-s)|f(u(s))|\,\dee s\nonumber\\
& \le   S(t)|\vphi|+ \It \ell\left(\lam \Phinv\left(s^{-\frac{n}{2}}\right)\right)S(t-s)w(s)\,\dee s\nonumber\\
& = S(t)|\vphi|+ 2\It \ell\left(\lam \Phinv\left(s^{-\frac{n}{2}}\right)\right)S(t-s)S(s)|\vphi|\,\dee s\nonumber\\
& =   S(t)|\vphi|+ 2S(t)|\vphi|\It \ell\left(  \lam  \Phinv\left(s^{-\frac{n}{2}}\right)\right)\,\dee s\nonumber\\
& =   \frac{1}{2}w(t)+  \frac{2}{n}w(t)\int^{\infty}_{t^{-\frac{n}{2}}} x^{-\pF}\ell\left(  \lam  \Phinv\left(x\right)\right)\,\dee x\nonumber\\
& \le w(t)\label{eq:CMTw}
\end{align}
for all $t\in (0,T)$ and $T$ sufficiently small (and independent of $\vphi\in\K$), using \eqref{eq:IC}.  By Theorem\,A(a)(iii), $w\in X_{\vphi}$ so it also follows from \eqref{eq:CMTw} that  \(\F (u)\in  L^{\infty}\left( (0,T ), \Mphi\right).\)
Hence for such $T$,  $\F (u)\in X_{\vphi}$ so that $\F \:X_{\vphi}\!\to\! X_{\vphi}$.

We now show that $\F$ is a contraction.  
Let $u,v\in X_{\vphi}$ 
and $t\in (0,T )$. Again by \eqref{eq:fellplus},  \eqref{eq:Bbound}, \eqref{eq:2S} and Theorem\,A(a)(i), 
\begin{align}
\|\F(u(t))-\F(v(t))\|_{\Phi} & \le    \It \left\|S(t-s)\left(f(u(s))-f(v(s))\right)\right\|_{\Phi}\,\dee s\nonumber\\
&\le   \It \ell\left(\lam \Phinv\left(s^{-\frac{n}{2}}\right)\right)\left\|S(t-s)\left(u(s)-v(s)\right)\right\|_{\Phi}\,\dee s\nonumber\\    
& \le   2\It \ell\left( \lam\Phinv\left(s^{-\frac{n}{2}}\right)\right)\left\|u(s)-v(s)\right\|_{\Phi}\,\dee s\nonumber\\
& \le   2d(u,v)\int_0^T \ell\left( \lam\Phinv\left(s^{-\frac{n}{2}}\right)\right)\,\dee s\nonumber\\
& =   \frac{4}{n}d(u,v)\int^{\infty}_{T^{-\frac{n}{2}}}  x^{-\pF}\ell\left(  \lam  \Phinv\left(x\right)\right)\,\dee x.\label{eq:d1}
\end{align}
It follows from \eqref{eq:d1} that $\F$ is a contraction on $X_{\vphi}$ for small enough $T$ (independent of $\vphi\in\K$) and thus possesses a unique  fixed point $u=u(t;\vphi)$ in $X_{\vphi}$ satisfying $\F (u)=u$. Hence  there exists a  $ T_{\K} >0$ and a local integral solution 
$u\in L^{\infty}\left( (0,T_{\K} ), \Mphi\right)\cap L^{\infty}_{\rm loc}\left( (0,T_{\K} ), L^{\infty}\right)$
satisfying   $\|u(t)\|_\infty\le \|w(t)\|_\infty\le \lam \Phinv\left(t^{-\frac{n}{2}} \right)$, for $t\in (0,T_{\K} )$.

From the integral formulation \eqref{eq:VoC} and 
classical parabolic regularity results for  Lipschitz continuous $f$, 
$u$ is a classical  solution  of \eqref{eq:she} in $Q_{T_{\K} }$. By \eqref{eq:fLw}, $f(u(t))\in \Mphi$ for all $t\in (0,T_{\K} )$
and since $u\in L^{\infty}\left( (0,T_{\K} ), \Mphi\right)$ we also have 
\[
\|f(u)\|_{\Phi} \le \ell (\|w(t)\|_\infty)\|u(t)\|_{\Phi}\le C \ell\left( \lam  \Phinv \left(t^{-\frac{n}{2}} \right) \right).
\]
By  \eqref{eq:IC}, $f(u)\in L^1((0,T_{\K} ), \Mphi)$ and so the function
\[
\left\{t\mapsto \It S(t-s)f(u(s))\,\dee s\right\}\in  C([0,T_{\K} ), \Mphi).
\]
Since by Theorem\,A(a)(iii) $S(t) $ is a $C_0$-semigroup on $\Mphi$,   
the function $t\mapsto S(t)\vphi$ is in $C([0,T_{\K} ), \Mphi)$ (see e.g., \cite[Corollary~2.3]{Pazy}). Hence $u=\F (u)\in C([0,T_{\K}), \Mphi)$ and  $u$ is an   $M^{\Phi}$-classical solution of  \eqref{eq:she}.
 
 The convergence estimate as $t\to 0$ in  \eqref{eq:Ca} follows from $ |u(t)|\le  2S(t)|\vphi|$, compactness of $\K$ and Corollary\,A(ii).

Finally,  the existence of positive (resp. negative) solutions for $\vphi\ge 0$ (resp. $\vphi\le 0$) when $f$ is positone, follows by the same contraction argument as above, but now in the metric space $(X_{\vphi}^+,d)$ (resp. $(X_{\vphi}^-,d)$), where
\[
X_{\vphi}^+=\left\{ u\in L^{\infty}\left( (0,T ), \Mphi\right)\ :\ 0\le u(t)\le w(t),\  t\in (0,T)\right\}.
\]
(resp. $X_{\vphi}^-=\left\{ u\in L^{\infty}\left( (0,T ), \Mphi\right)\ :\ -w(t)\le u(t)\le  0\right\}$).    It is clear from the positonicity of $f$ and \eqref{eq:CMTw} that   $\F $ maps $ X_{\vphi}^+$ (resp. $X_{\vphi}^-$) into itself for small enough $T>0$ and contractivity is unchanged. Regularity also follows as before.
\end{proof}

%%%%%%%%%%%%%%%%%%%%%%%%%%%%%%%%%%%%%%%%%%%%%%
\subsection{Conditional Continuous Dependence}
%%%%%%%%%%%%%%%%%%%%%%%%%%%%%%%%%%%%%%%%%%%%%%%%%

\begin{prop}
Suppose $\Phi\in\mathcal{N}$
 and $f$ satisfies {\bf (L)} and \eqref{eq:IC}. 
  Let $\K$ be any compact subset  
  of $\Mphi$ and for any $\vphi\in\K$ let $u(t;\vphi)$ denote the solution of \eqref{eq:she} guaranteed by Theorem\,B(a), 
  for $t\in [0,T_{\K})$. There exist $C_{\K}>0$ and $\tau_{\K}\in (0,T_{\K})$ 
 such that for all  $\phi , \vphi\in\K$ and $t\in (0,\tau_{\K} ]$, 
\be
\label{eq:cd}
\left\| u(t;\phi)- u(t;\vphi)\right\|_{\Phi}+\frac{\left\|u (t;\phi)-u (t;\vphi)\right\|_{\infty}}{{\Phinv (t^{-\frac{n}{2}})}} \le C_{\K}\left\|\phi-\vphi\right\|_{\Phi}.
\ee
\label{prop:cd}
\end{prop}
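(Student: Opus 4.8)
The plan is to deduce both estimates in \eqref{eq:cd} from the single \emph{pointwise} comparison
\[
|u(t;\phi)-u(t;\vphi)|\le K\,\bigl[S(t)|\phi-\vphi|\bigr]\quad\text{a.e. in }\R^n,\ t\in(0,\tau_B],
\]
for a constant $K=K(B)$ and some $\tau_B\in(0,T_B)$. Granting this, the proposition is immediate: taking $\|\cdot\|_\Phi$ and using Theorem\,A(a)(i) with $\bigl\||\phi-\vphi|\bigr\|_\Phi=\|\phi-\vphi\|_\Phi$ gives $\|u(t;\phi)-u(t;\vphi)\|_\Phi\le 2K\|\phi-\vphi\|_\Phi$, while taking $\|\cdot\|_\infty$ and using Corollary\,A(i) gives $\|u(t;\phi)-u(t;\vphi)\|_\infty\le K\C\,\|\phi-\vphi\|_\Phi\,\Phinv(t^{-n/2})$; adding the two yields \eqref{eq:cd} with $C=2K+K\C$. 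Note $\phi-\vphi\in\Mphi$, so $S(t)|\phi-\vphi|\in\Mphi$ is well defined and continuous in $t$ by Theorem\,A(a)(iii).

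To obtain the pointwise bound I would work along the Picard iterates of the contraction used in Theorem\,B(a). The reason for going through the iterates is that the two naive routes both fail: estimating $\|u(t;\phi)-u(t;\vphi)\|_\infty$ directly via the Duhamel formula and Corollary\,A(i) produces integrals of the type $\int_0^t\Phinv(s^{-n/2})\,\ell(\lam\Phinv(s^{-n/2}))\,\dee s$ which need not be dominated by $\Phinv(t^{-n/2})$ once $n\ge 2$, and a direct Gronwall argument on the ratio $|u(t;\phi)-u(t;\vphi)|/[S(t)|\phi-\vphi|]$ breaks down because this ratio can be unbounded near $t=0$. Writing $\F_\psi(u)(t)=S(t)\psi+\int_0^t S(t-s)f(u(s))\,\dee s$ as in \eqref{eq:VoC}, recall from the proof of Theorem\,B(a) that for $\psi\in B$ the solution $u(\cdot;\psi)$ is the limit in $\bigl(L^\infty((0,T_B),\Mphi),d\bigr)$ of the iterates $u_0\equiv 0$, $u_{k+1}=\F_\psi(u_k)$, each of which lies in $X_\psi$, so that $|u_k(s;\psi)|\le 2[S(s)|\psi|]\le\lam\Phinv(s^{-n/2})$ for $s\in(0,T_B)$ by \eqref{eq:Bbound}. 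Set $\theta(t):=\int_0^t\ell\bigl(\lam\Phinv(s^{-n/2})\bigr)\,\dee s=\tfrac{2}{n}\int_{t^{-n/2}}^{\infty}x^{-\pF}\ell(\lam\Phinv(x))\,\dee x$; by \eqref{eq:IC} this is finite for $t\le1$ and tends to $0$ as $t\to0$, so one may fix $\tau_B\in(0,T_B)$ with $\theta(\tau_B)<1$ (indeed $\theta(T_B)<\tfrac12$ is already forced by the contraction estimate \eqref{eq:d1}).

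The induction claim is that $|u_k(t;\phi)-u_k(t;\vphi)|\le K_k\,[S(t)|\phi-\vphi|]$ on $(0,\tau_B]$ with $K_0=0$ and $K_{k+1}=1+\theta(\tau_B)K_k$; the base case is trivial. For the step, subtract the two Duhamel identities for $u_{k+1}$, use positivity of the heat kernel, \eqref{eq:fellplus} with the uniform bound $|u_k(s;\cdot)|\le\lam\Phinv(s^{-n/2})$ and the induction hypothesis to get $|f(u_k(s;\phi))-f(u_k(s;\vphi))|\le\ell(\lam\Phinv(s^{-n/2}))\,K_k\,[S(s)|\phi-\vphi|]$, and then pull out $S(t-s)S(s)=S(t)$ exactly as in the computation leading to \eqref{eq:CMTw}:
\[
|u_{k+1}(t;\phi)-u_{k+1}(t;\vphi)|\le [S(t)|\phi-\vphi|]\Bigl(1+K_k\!\int_0^t\!\ell(\lam\Phinv(s^{-n/2}))\,\dee s\Bigr)\le(1+\theta(\tau_B)K_k)\,[S(t)|\phi-\vphi|].
\]
Since $\theta(\tau_B)<1$ the recursion gives $K_k\le (1-\theta(\tau_B))^{-1}=:K$ for every $k$.

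Finally, since $u_k(\cdot;\phi)\to u(\cdot;\phi)$ and $u_k(\cdot;\vphi)\to u(\cdot;\vphi)$ in $L^\infty((0,\tau_B),\Mphi)$, a common subsequence converges a.e.\ on $\R^n\times(0,\tau_B)$, exactly as in the proof that $X_\vphi$ is closed in Theorem\,B(a); passing to the limit in the induction bound gives $|u(t;\phi)-u(t;\vphi)|\le K\,[S(t)|\phi-\vphi|]$ a.e., and since both sides are continuous from $(0,\tau_B]$ into $\Mphi$ the inequality holds for every $t\in(0,\tau_B]$, which is the pointwise bound. When $f$ is positone the identical argument applies with $X_\psi$ replaced by its positive cone $X_\psi^+$. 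I expect the main obstacle to be precisely the recognition that $L^\infty$ continuous dependence should not be sought through norm inequalities (which meet a non-integrable kernel when $n\ge2$) but extracted from the envelope $K\,S(t)|\phi-\vphi|$, together with the realisation that a $k$-uniform constant for this envelope is available only along the iterates; the remaining steps are routine.
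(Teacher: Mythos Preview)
Your argument is correct and takes a genuinely different route from the paper's own proof. The paper works directly at the level of norms: it estimates $\|u(t;\phi)-u(t;\vphi)\|_\Phi$ by Theorem\,A(a)(i), then handles $\|u(t;\phi)-u(t;\vphi)\|_\infty$ by splitting the Duhamel integral at $t/2$, using Corollary\,A(i) on $[0,t/2]$ and the trivial $L^\infty$ bound on $[t/2,t]$, together with the doubling inequality $\Phinv((t/2)^{-n/2})\le 2^{n/2}\Phinv(t^{-n/2})$. These two estimates are combined into a single inequality for $y(t)=\|u-v\|_\Phi+\gamma(t)\|u-v\|_\infty$, and the proof closes with a singular Gronwall lemma. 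So the ``naive norm route'' you dismissed as failing is in fact exactly what the paper makes work, via the $t/2$-split and the concavity of $\Phinv$.

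Your approach instead establishes the pointwise envelope $|u(t;\phi)-u(t;\vphi)|\le K\,S(t)|\phi-\vphi|$ by induction along the Picard iterates, exploiting the semigroup identity $S(t-s)S(s)=S(t)$ to collapse the Duhamel integral, then reads off both norm estimates at once. This is more elementary (no Gronwall, no integral splitting, no doubling bound) and yields a strictly stronger pointwise conclusion. Its cost is a tighter coupling to the specific construction of Theorem\,B(a): you need the iterates and their uniform $X_\psi$-bounds, whereas the paper's Gronwall argument applies to any pair of $M^\Phi$-classical solutions satisfying \eqref{eq:uv} and \eqref{eq:Ca}. One small point: the passage to the limit in $k$ is cleanest done for each fixed $t$ separately (extract a subsequence converging a.e.\ in $x$ for that $t$), rather than seeking joint a.e.\ convergence on $\R^n\times(0,\tau_B)$.
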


\begin{proof}   %Let $\lam$ be as in {\bf (I)}. 
Let $\phi , \vphi\in\K$. To simplify notation  let   $u(t)=u (t;\phi)$ and $v(t)=u (t;\vphi)$ and set 
\[
\A (t)={{\Phinv (t^{-\frac{n}{2}})}},\qquad \gamma (t)=\frac{1}{\A(t)}, \qquad t>0.
\]
By Lemma\,\ref{lem:concave}, it is easy to see that 
\[
\A (t/2)\le 2^{n/2}\A (t),\qquad t>0.
\]
{From the proof of Theorem\,B(a) we have that $u,v\in X_{\vphi}$ so that $|u(t)|\le 2 S(t)|\phi|$ and $|v(t)|\le 2 S(t)|\vphi|$, recalling  \eqref{eq:2S} and \eqref{eq:defXphi}}. By Corollary\,A(ii),  there exists $\tau :=\tau_{\K}\in (0,T_{\K})$ such  that
 \be
 \left\|u(t)\right\|_{\infty}\le {\lam} \A (t) \quad\text{and}\quad  \left\|v(t)\right\|_{\infty}\le {\lam}  \A (t),\qquad t\in (0,\tau ].\label{eq:uv}
 \ee
{\allowdisplaybreaks
For such $t$, by \eqref{eq:uv}, Theorem\,A(a)(i) and recalling \eqref{eq:L},
\begin{align}
\|u(t)-v(t)\|_{\Phi} & \le   \|S(t)(\phi-\vphi)\|_{\Phi}+ \It \left\|S(t-s)\left(f(u(s))-f(v(s))\right)\right\|_{\Phi}\,\dee s\nonumber\\
&\le   2\|\phi-\vphi\|_{\Phi} +\It  \ell\left( \lam\A (s)\right)\left\|S(t-s)\left(u(s)-v(s)\right)\right\|_{\Phi}\,\dee s\nonumber\\
& \le  2\|\phi-\vphi\|_{\Phi} + 2\It \ell\left( \lam\A (s)\right)\left\|u(s)-v(s)\right\|_{\Phi}\,\dee s.\label{eq:L1}
\end{align}
}
Next, using Theorem\,A(a)(i) and Corollary\,A(i), we have
{\allowdisplaybreaks
\begin{align}
& \|u(t)-v(t)\|_{\infty}
   \begin{multlined}[t] \le   \|S(t)(\phi-\vphi)\|_{\infty}\\
+\int_0^{t} \left\|S(t-s)\left(f(u(s))-f(v(s))\right)\right\|_{\infty}\,\dee s\nonumber \end{multlined}\\
 \le &\  \begin{multlined}[t]\C \A (t)\|\phi-\vphi\|_{\Phi}+2\int_{t/2}^t \left\|{f(u(s))-f(v(s))}\right\|_{\infty}\,\dee s\nonumber\\
  +\int_0^{t/2} \left\|\frac{f(u(s))-f(v(s))}{u(s)-v(s)}\right\|_{\infty}\left\|S(t-s)(u(s)-v(s))\right\|_{\infty}\,\dee s\nonumber
\end{multlined}\\
 \le &\  \begin{multlined}[t] \C\A (t)\|\phi-\vphi\|_{\Phi}+ 2\int_{t/2}^t \ell\left(\lam\A (s) \right) \| {u(s)-v(s)}\|_{\infty}\,\dee s \nonumber\\
   + \C\int_0^{t/2} \ell\left(\lam\A (s) \right) \A (t-s)\| {u(s)-v(s)}\|_{\Phi}\,\dee s\nonumber
   \end{multlined}\\
\le &\   \begin{multlined}[t]\C\A (t)\|\phi-\vphi\|_{\Phi}+2\A (t/2)\int_{t/2}^t \ell\left( \lam\A (s)\right) \gamma(s)\| {u(s)-v(s)}\|_{\infty}\,\dee s\nonumber\\
  +\C\A (t/2)\int_0^{t/2} \ell\left(  \lam \A (s)\right)\| {u(s)-v(s)}\|_{\Phi}\,\dee s\nonumber
  \end{multlined}\\
\le &\   \begin{multlined}[t] \C\A (t)\|\phi-\vphi\|_{\Phi}+2^{1+n/2}\A (t)\int_{0}^t \ell\left( \lam \A (s)\right) \gamma(s)\| {u(s)-v(s)}\|_{\infty}\,\dee s\\
  +2^{n/2}\C\A (t)\int_0^{t} \ell\left(  \lam \A (s)\right)\| {u(s)-v(s)}\|_{\Phi}\,\dee s.\label{eq:Linf}
  \end{multlined}
\end{align}
}
Combining (\ref{eq:L1})-(\ref{eq:Linf}) we obtain, for some $C=C(\C,n)>0$ and all $t\in (0,\tau ]$,
\begin{align}
& \|u(t)- v(t)\|_{\Phi} +\gamma(t)\|u(t)-v(t)\|_{\infty}   \le C\|\phi-\psi\|_{\Phi} \nonumber\\
&+C\int_0^{t} \ell\left(  \lam\A (s)\right)\left( \| {u(s)-v(s)}\|_{\Phi}+ \gamma(s) \| {u(s)-v(s)}\|_{\infty}\right)\,\dee s.\label{eq:gron}
\end{align} 
Now define $y(t)$ on $[0,\tau ]$ by $y(0)=\|\phi-\vphi\|_{\Phi}$ and
\[
y (t)=\|u(t)-v(t)\|_{\Phi} +\gamma(t)\|u(t)-v(t)\|_{\infty}, \qquad t\in (0,\tau ].
 \]
Then by \eqref{eq:gron}, 
\[
y(t)\le C y(0)+C\It \ell\left(  \lam\Phinv \left(s^{-n/2}\right)\right) y(s)\,\dee s, \qquad t\in (0,\tau ].
\]
By assumption $u,v\in C\left([0,\tau ],\Lphi\right)$ and  both are classical solutions for $t>0$. By standard parabolic regularity results we have that
 $u,v\in  C\left((0,\tau ),L^{\infty} \right)$,
  so that $y$ is continuous on $(0,\tau ]$. By \eqref{eq:Ca} of Theorem\,B(a), $y(t)\to \|\phi-\vphi\|_{\Phi}=y(0)$ as $t\to 0$ and so  $y$ is continuous on $[0,\tau ]$.
 Hence by  \eqref{eq:IC} and  the singular Gronwall inequality (see  e.g.,  \cite[Ch.XII, Theorem 4]{MPF}), it follows  that
\[
\|u(t)-v(t)\|_{\Phi} +\gamma(t)\|u(t)-v(t)\|_{\infty}   \le  C\|\phi-\vphi\|_{\Phi} \e^{q(t)}
\]
for all $t\in (0,\tau ]$, where
\[
q(t)=C\int_0^t \ell\left( \lam \Phinv \left(s^{-n/2}\right)\right)\,\dee s.
\]
Clearly by \eqref{eq:IC}  $q$ is continuous and $q(t)\to 0$ as $t\to 0$   so there exists $C_{\K}>0$ such that 
$q(t)\le C_{\K}$  for all $t\in (0,\tau ]$ and \eqref{eq:cd} follows.
\end{proof}

%%%%%%%%%%%%%%%%%%%%%%%%%%%%%%%%%%%%%%%%%%%%%%
\subsection{Proof of Theorem\,B(b-c): Uniqueness and Continuous Dependence}
%%%%%%%%%%%%%%%%%%%%%%%%%%%%%%%%%%%%%%%%%%%%%%%%%

\begin{proof}

(b) (Uniqueness.) We follow the methods in \cite[Lemma\,9]{BC} and \cite[Theorem\,2.4]{LS20} in Lebesgue spaces, with minor adjustments to incorporate Proposition\,\ref{prop:cd}. Set $u (t):=u (t;\vphi)$ for $t\in [0,T_{\vphi})$ where $u (t;\vphi)$ is the $M^{\Phi}$-classical solution guaranteed  by Theorem\,B(a). 
Let $T\in (0, T_{\vphi})$ be arbitrary and  suppose  there  exists another  
    $M^{\Phi}$-classical solution $v$ on $[0,T]$  with $v(0)=\vphi$.
 By classical $L^{\infty}$-theory   it is clear that  if there exists a $T_0 \in (0,T)$ such that $u (t)=v(t)$ on $(0,T_0)$, then
  $u (t)=v(t)$ on $[0,T]$, i.e., uniqueness for sufficiently small times implies uniqueness on $[0,T]$.

Let $\K =v\left([0,T]\right)$. Since  by assumption $v\in C\left([0,T],\Mphi\right)$,  $\K$
 is a  compact  subset (metric space) of $\Mphi$. Now let  $\tau_{\K}$ be as in Proposition\,\ref{prop:cd} and set $\tau =\min\{\tau_{\K},T\}$.      
For all $t\in [0,\tau)$ we may then define $\U_{\K}(t)\:\K\!\to\! \Mphi$  by $\U_{\K}(t)\psi =u (t;\psi )$  and deduce from
 Proposition\,\ref{prop:cd}  that $\U_{\K}(t)$ is continuous (with respect to the induced metric from $\|\cdot\|_{\Phi}$). Again
 by classical $L^{\infty}$-uniqueness theory, for any $s\in (0,T)$ and $0<t< \min\{\tau,T-s\}$, we have  $v(t+s)=\U_{\K}(t)v(s)$.   
 Letting $s\to 0$ and using the continuity of $v\: [0,\tau ]\!\to\!\K$ and $\U_{\K}(t)\:\K\!\to\! \Mphi$ we  obtain $v(t)=\U_{\K}(t)v(0)=\U_{\K}(t)\vphi=u (t;\vphi)=u (t)$ for all  $t> 0$ small enough, as required.

(c) (Continuous dependence.) By uniqueness of       $M^{\Phi}$-classical solutions from part (a),  any such solution is {\emph{necessarily}} the solution obtained in Theorem\,B(a). Hence it satisfies the inequality \eqref {eq:cd}
 of Proposition\,\ref{prop:cd} and continuous dependence of  $M^{\Phi}$-classical solutions follows. 
\end{proof}

%%%%%%%%%%%%%%%%%%%%%%%%%%%%%%%%%%%%%%%%\subsection{Proof of Theorem\,B part (c): Uniqueness of  Mild Solutions}
%%%%%%%%%%%%%%%%%%%%%%%%%%%%%%%%%%%%\begin{proof}\end{proof}

%%%%%%%%%%%%%%%%%%%%%%%%%%%%%%%%%%%%%%%%
\subsection{Proof of Theorem\,B(d): Comparison Principle}
%%%%%%%%%%%%%%%%%%%%%%%%%%%%%%%%%%%%%%%%

\begin{proof} Assume $f$ is increasing and thus positone, since $f(0)=0$. For any $\phi\in\Mphi$ set 
\[
\phi^-=\min\{\phi,0\}\le 0,\qquad \phi^+=\max\{\phi,0\}\ge 0
\]
and  
\[
v(t;\phi )=2S(t)\phi^-\le 0,\qquad w(t;\phi )=2S(t)\phi^+\ge 0.
\]
By Theorem\,A(a)(iii) and Corollary\,A(i), 
\[ v,w\in L^{\infty}\left((0,T),\Mphi \right)\cap L^{\infty}_{\loc}\left((0,T),L^{\infty}\right)\]
 and by  Corollary\,A(ii)  there exists $\tau_\phi>0$ such that  
 \[
\left\| 2S(s)\phi^{\pm}\right\|_{\infty}\le \lam \Phinv\left(s^{-\frac{n}{2}}\right),\qquad s\in (0,\tau_\phi ).
 \]
 Arguing  as in  \eqref{eq:CMTw}, we have
 \begin{align}
\F (w;\phi) & =   S(t)\phi+ \It S(t-s)f(w(s))\,\dee s\nonumber\\
& \le   S(t)\phi^+ + 2\It \ell\left(\left\| 2S(s)\phi^+\right\|_{\infty}\right)S(t-s)S(s)\phi^+\,\dee s\nonumber\\
& \le   S(t)\phi^+ + 2S(t)\phi^+\It \ell\left(  \lam  \Phinv\left(s^{-\frac{n}{2}}\right)\right)\,\dee s\nonumber\\
& =   \frac{1}{2}w(t)+  \frac{2}{n}w(t)\int^{\infty}_{t^{-\frac{n}{2}}} x^{-\pF}\ell\left(  \lam  \Phinv\left(x\right)\right)\,\dee x\nonumber\\
& \le w(t)\label{eq:superw}
\end{align}
for all $t\in (0,T^+_\phi)$ and $T^+_\phi>0$ sufficiently small. Hence   $w$ is an integral supersolution (see e.g., \cite{LS20,RS}). 
We may then define the sequence $w_k$ via the iterative procedure 
\[
w_{k+1}(t;\phi )={\F}(w_k;\phi),\qquad w_0(t;\phi )=2S(t)\phi^+ ,\qquad t\in [0,T^+_\phi ).
\]
By \eqref{eq:superw}, $w_1(t;\phi )\le w_0(t;\phi )$. Since $f$ is monotone increasing we see that $\F (w ;\phi)$ is increasing in $w$ and it follows by induction that   $w_k(t;\phi )$ is a decreasing sequence.

In an almost identical manner  it can be shown that $\F (v;\phi)\ge v$  for all $t\in (0,T^-_\phi)$ and $T^-_\phi>0$ sufficiently small and  one can construct
 an increasing  sequence $v_k(t;\phi )$ via  
\[
v_{k+1}(t;\phi )={\F}(v_k;\phi),\qquad v_0(t;\phi )=2S(t)\phi^- ,\qquad t\in [0,T^-_\phi ).
\]
Again since $\F (\cdot ;\phi)$ is increasing and $v_0(t;\phi )\le w_0(t;\phi )$  it is easy to show by induction that $v_k(t;\phi )\le w_k(t;\phi )$ for all $k$. 

In particular, the sequence $w_k(t;\phi )$ is decreasing and bounded below by $v_0(t;\phi )$. By Levi's monotone convergence theorem  we may  pass to the pointwise limit in $w_k(t;\phi )$ to obtain a measurable function $U(t;\phi)$ satisfying $U={\F}(U;\phi)$. Since $v_0\le U\le w_0$, $U$ is a.e. finite in $Q_{T_\phi}$ and $U$ is a local integral solution of \eqref{eq:she}. Also since $v_0,w_0\in L^{\infty}\left((0,T),\Mphi \right)\cap L^{\infty}_{\loc}\left((0,T),L^{\infty}\right)$, one may argue as in the proof of regularity in Theorem\,B(a) to show that $U$ is an   $M^{\Phi}$-classical solution. Then by uniqueness   $U(t;\phi)=u(t;\phi)$, with $u(t;\phi)$  as in Theorem\,B(a).

For initial data $\vphi\in\Mphi$ with $\vphi\ge\phi$, we may repeat the  iterative procedure above to obtain an   $M^{\Phi}$-classical solution $U(t;\vphi)$ of \eqref{eq:she} 
as the pointwise limit of the decreasing sequence defined by
\[
w_{k+1}(t;\vphi )={\F}(w_k;\vphi),\qquad w_0(t;\vphi )=2S(t)\vphi^+ ,\qquad t\in [0,T^+_\vphi ).
\]
Again by uniqueness   $U(t;\vphi)=u(t;\vphi)$, with $u(t;\vphi)$  as in Theorem\,B(a).

Finally, by the monotonicity of $\F (\cdot ;\cdot)$  in  its second argument it is easy to show  inductively that 
$w_{k}(t;\phi )\le w_{k}(t;\vphi )$ for all $k$. Passing to the pointwise limit as $k\to\infty$ we obtain 
$u(t;\phi)= U(t;\phi)\le U(t;\vphi)=u(t;\vphi)$, for all small $t$. Since these solutions are classical for $t>0$, comparison then holds on their common interval of existence by standard $L^{\infty}$-theory.
\end{proof}

%%%%%%%%%%%%%%%%%%%%%%%%%%%%%%%%%%%%%%%%%%%%%%
\subsection{Proof of Corollary\,B: Global Well-posedness }
%%%%%%%%%%%%%%%%%%%%%%%%%%%%%%%%%%%%%%%%%%%%%%

\begin{proof} 
Assume \eqref{eq:Iglobal} holds and $A$ is given by  \eqref{eq:Aglobal}. Observe that $1<A<2$. Suppose   $\vphi\in \Mphi$ with $ \left\|\vphi \right\|_{\Phi}\le \lam/(A\C)$, where $\C$ is as in \eqref{eq:smoothing3}. We proceed in an almost identical manner to the proof of local existence from part (a) of Theorem\,B, via a contraction mapping argument. As the details are so similar we summarise the calculations more succinctly.

Set $w(t)=AS(t)|\vphi|$ and define  
\[
X^{\infty}_{\vphi}=\left\{ u\in L^{\infty}\left( (0,\infty), \Mphi\right)\ :\ |u(t)|\le w(t),\  t>0\right\}
\]
 with the induced metric 
 \[
 d_{\infty}(u,v)=\sup_{t>0}\|u(t)-v(t)\|_{\Phi},\qquad u,v\in X^{\infty}_{\vphi}.
 \]
By Theorem\,A(a)(i) and (iii), $w\in X^{\infty}_{\vphi}$  and $(X^{\infty}_{\vphi},d_{\infty})$ is a non-empty  complete metric space. 
For any  $u\in X^{\infty}_{\vphi}$,
 \begin{align*} 
\left|{f(u(s))}\right|& \le  
\ell\left(\left\|w(s)\right\|_{\infty}\right){w(s)}
\le \ell\left(  A\C \|\vphi\|_{\Phi} \Phinv\left(s^{-\frac{n}{2}}\right)\right){w(s)}\\
& \le \ell\left(  \lam \Phinv\left(s^{-\frac{n}{2}}\right)\right){w(s)}.
\end{align*}
Hence for all $t>0$,
 \begin{align*}
|\F (u)| & \le   
    S(t)|\vphi|+ \It \ell\left(  \lam \Phinv\left(s^{-\frac{n}{2}}\right)\right)S(t-s)w(s)\,\dee s\nonumber\\
& =   S(t)|\vphi|+ AS(t)|\vphi|\It \ell\left(  \lam \Phinv\left(s^{-\frac{n}{2}}\right)\right)\,\dee s\nonumber\\
& =   \frac{1}{A}w(t)+  \frac{2}{n}w(t)\int^{\infty}_{0} x^{-\pF}\ell\left(  \lam  \Phinv\left(x\right)\right)\,\dee x\nonumber\\
& =w(t).
\end{align*}
 Since $w\in X^{\infty}_{\vphi}$ it  follows  that $\F (u)\in X^{\infty}_{\vphi}$ and   $\F \:X^{\infty}_{\vphi}\!\to\! X^{\infty}_{\vphi}$.

For any $u,v\in X^{\infty}_{\vphi}$ we have
\begin{align}
\|\F(u(t))-\F(v(t))\|_{\Phi} & \le    \It \left\|S(t-s)\left(f(u(s))-f(v(s))\right)\right\|_{\Phi}\,\dee s\nonumber\\
&\le   \It \ell\left(\left\|w(s)\right\|_{\infty}\right)\left\|S(t-s)\left(u(s)-v(s)\right)\right\|_{\Phi}\,\dee s\nonumber\\
& \le   2\It \ell\left(  \lam \Phinv\left(s^{-\frac{n}{2}}\right)\right)\left\|u(s)-v(s)\right\|_{\Phi}\,\dee s\nonumber\\
& \le    \frac{4}{n}d_{\infty}(u,v)\int^{\infty}_{0} x^{-\pF}\ell\left(  \lam  \Phinv\left(x\right)\right)\,\dee x,\nonumber
\end{align}
yielding a contraction  by \eqref{eq:Iglobal}.

In an identical way to the proof of Theorem\,B(a), we obtain an    
$M^{\Phi}$-classical solution $u\in L^{\infty}\left( (0,\infty), \Mphi\right)$ of \eqref{eq:she}. 
In particular $|u(t)|\le AS(t)|\vphi|$ so that by  Corollary\,A(i) $\|u(t)\|_{\infty}\le \lam\Phinv (t^{-n/2})$. Since $\Phi$ is positive $\Phinv (0)=0$, so by continuity of  $\Phinv$
\[\limsup_{t\to\infty}\|u(t)\|_{\infty}\le \lam\Phinv (0)=0.\]
\end{proof}

%%%%%%%%%%%%%%%%%%%%%%%%%%%%%%%%%%%%%%%%%%%%%%%%%
\section{Proof of Theorem\,C and  Corollary\,C}

For  $f$ satisfying    {\bf (S)} and {\bf (G)} of Section\,\ref{sec:thmC} we begin by establishing some   regular variation  properties of relevant  functions. 
We then show that the asymptotic order relation satisfied by $\Phi$  in  Theorem\,C(a)(i)  is sufficient to ensure that the integral condition \eqref{eq:IC} of   Theorem\,B holds, to obtain local well-posedness of \eqref{eq:she}. For uniqueness of mild solutions in Theorem\,C(a)(ii) we show that the additional condition \eqref{eq:qzero} is sufficient to guarantee that any mild solution is necessary classical and thus by Theorem\,B(b) unique.  The nonexistence result in part (a)(iii) of Theorem\,C is obtained via an application of some recent results  in \cite{FHIL2} on dilation-critical singularities.  Theorem\,C(b) and Corollary\,C   are obtained by verifying \eqref{eq:Iglobal}  subject to  
suitable conditions on $f$ and $\Phi$ near  zero.

%%%%%%%%%%%%%%%%%%%%%%%%%%%%%%%%%%%%%%%%%%%%%%%%%\subsection{Proof of Theorem\,C}
Where necessary the reader is advised to consult Section\,\ref{sec:regvar} on regularly varying functions, especially regarding our terminology in the case $\rho =\infty$.
We will write $fF$ for the product function $fF(u):=f(u)F(u)$.

\begin{lemma}\label{lem:LHop}
Suppose  $f:(0,\infty)\to (0,\infty)$ is  $C^1$ and  $(\log f)'$ is regularly varying of  index $\rho-1$ with $\rho\in (0,\infty]$.  Then the following hold:
\bi[leftmargin=2em]
\item[(i)] $\log f$ is regularly  varying of index $\rho\in (0,\infty]$;
\item[(ii)] for  any $k>0$,  if $\rho\in (0,\infty)$ then $u^{-k}f(u)\to\infty$ as $u\to\infty$ while  if  $\rho=\infty$ then $u^{-k}\log f(u)\to\infty$ as $u\to\infty$;
\item[(iii)] for any $k>0$, $f^k\approx f$;
\item[(iv)]  if $f$ satisfies {\bf (S)}  and {\bf (G)} then $\displaystyle{\lim_{u\to\infty}{ f'(u)F(u)}}=1$;
\item[(v)]  if $f$ satisfies {\bf (S)}   and {\bf (G)} then $fF$ is regularly   varying of index ${1-\rho}\in [-\infty, 1)$.
\ei
\end{lemma}

\begin{proof}

(i)      
If $\rho\in (0,\infty )$ then  by l'H\^opital's rule  we obtain
	\begin{align}
	 \lim_{u\to\infty}\frac{\log f(\lam u)}{\log f(u)}& = \lim_{u\to\infty}\frac{(\log f(\lam u))'}{(\log f(u))'}=
	\lam \lim_{u\to\infty}\frac{ f'(\lam u)/f(\lam u)}{ f'(u)/f(u)}\nonumber\\
& =\lam \lim_{u\to\infty}\frac{ (\log f)'(\lam u)}{(\log f)' (u)}=\lam^{\rho},\label{eq:logrv}
	\end{align}
so that  $\log f$ is regularly varying of index $\rho $. If $\rho =\infty$ and $\lam\in (0,1)$ then the limit in \eqref{eq:logrv} is zero and  l'H\^opital's rule still applies; for $\lam >1$, setting $v=\lam u$ in $\log f(\lam u)/\log f(u)$ then yields an infinite limit, as the reciprocal of \eqref{eq:logrv}. Thus  $\log f$ is  rapidly varying of index $\rho =\infty$.

(ii) If $\rho\in (0,\infty)$  then by (i), $f(u)=\exp({u^\rho \mu (u)})$ 
for some slowly varying function $\mu$ and the first result follows.  
If $\rho=\infty$ then let $g=\log f$ and 
  $k>0$ be arbitrary. By  definition of rapid variation, if $\lambda < 1$ then ${g(\lambda u)}/{g(u)} \to 0$ as $u\to\infty$. 
Now by considering the function $h(u) = u^{-k}g(u)$, we have
\[ 
\lim_{u\to\infty}\frac{h(\lambda u)}{h(u)} = \lim_{u\to\infty}\frac{g(\lambda u)}{(\lambda u)^k} \frac{u^k}{g(u)} = {\lambda^{-k}}\lim_{u\to\infty}\frac{g(\lambda u)}{g(u)}=0.  \]
Hence $h$  is rapidly varying so that $h(u)\to\infty$ as $u \to \infty$, as required.

(iii)  Suppose first that  $\rho\in (0,\infty)$.  By part (i),    $f(u)=\exp (u^\rho \mu(u))$  for some slowly varying function $\mu$. For any $\lam >k^{-1/\rho}$,
\[
\frac{f^k(\lam u)}{f(u)}=\exp \left(u^\rho \mu(\lam u)\left(k\lam^\rho -  \frac{\mu(u)}{\mu(\lam u)}\right)\right)
\sim \exp \left(\left(k\lam^\rho -  1\right)u^\rho \mu(\lam u)\right)\to\infty
\]
as $u\to\infty$. Thus  $f^k\gtrsim f$. Likewise choosing $\lam<k^{-1/\rho}$ shows that $f^k\lesssim f$. Hence $f^k\approx f$. If $\rho =\infty$ then again by part (i) (recalling \eqref{eq:rhoinf}) we have, for any $\lam >1$, 
\[
\lim_{u\to\infty}\frac{\log f^k(\lam u)}{\log f(u)}=k\lim_{u\to\infty}\frac{\log f(\lam u)}{\log f(u)}=\infty.
\]
Hence for $u$ large enough, $\log f^k(\lam u)\ge \log f(u)$ so that $ f^k(\lam u)\ge  f(u)$ and $f^k\gtrsim f$. A similar argument with $\lam <1$ shows that 
$f^k\lesssim f$.

 (iv)   Let $A\in (0,\infty)$ denote the limit of $f'(u)F(u)$ as $u\to\infty$ in {\bf (G)}. By   \cite[Remark\,1.1]{FI1},  necessarily $A\ge 1$. To show that $A=1$, suppose for contradiction  that  $A>1$ and let $\eps \in (0,A-1)$.  Then there exists $u_1>0$ such that  $A-\eps\le f'(u)F(u)\le A+\eps$ for all $u\ge u_1$. Since $F'=-1/f<0$, we have
 \[
 \frac{ F''(u)}{ F'(u)}=f'(u)F'(u)\le (A-\eps)\frac{ F'(u)}{ F(u)},\qquad u\ge u_1. 
 \] 
Integrating twice leads to a bound of the form $F(u)\ge {C}{u^{-\frac{1}{A-1-\eps}}}$ for $u\ge u_1$. 
Then integrating the inequality 
\[
f'(u)\le \frac{A+\eps}{F(u)}\le {C^{-1}}{(A+\eps)}{u^{\frac{1}{A-1-\eps}}}
\]
yields a bound of the form $f(u)\le Cu^{\frac{A-\eps}{A-1-\eps}}$ for $u\ge u_2$, 
contradicting the growth estimates of part (ii).
 
(v) By {\bf (G)}, (i) and (iii) we have,
\begin{align*}
\lam^{\rho-1}&=\lim_{u\to\infty}\frac{ f'(\lam u)/f(\lam u)}{ f'(u)/f(u)}=
\lim_{u\to\infty}\left\{\left[\frac{ f(  u)F (u)}{f(\lam  u)F(\lam u)}\right]\frac{  \left[f'(\lam u)F(\lam u)\right]}{ \left[f'( u)F( u)\right]} \right\}   \nonumber \\
	&=  \lim_{u\to\infty}\frac{f(  u)F (u)}{f(\lam  u)F(\lam u)}
    %\label{eq:fFrv}
	\end{align*}
so that (upon taking the reciprocal) $fF$ is regularly varying of index ${1-\rho}\in [-\infty,1)$   (rapidly varying of index $-\infty$ when $\rho =\infty$). 
\end{proof}

\begin{rem}\label{rem:after71}
Let us  record some basic consequences of  Lemma\,\ref{lem:LHop}   used repeatedly in the subsequent proofs. Let $\lam, \eps\in (0,1)$. 
\bi[leftmargin=2em]
\item[(i)] By  {\bf (G)} there exists    $y_1=y_1(\eps)>0$  such that 
\be
1-\eps\le f'(y)F(y)\le 1+\eps, \qquad y\ge y_1.\label{eq:epsfF}
\ee  
\item[(ii)] For finite $\rho\in (0,\infty)$, by Lemma\,\ref{lem:LHop}(v) there exists    $y_2=y_2(\lam, \eps)>0$  such that 
\be
\frac{f(\lam y)F(\lam y)}{\lam f(y)F ( y)}\ge \lam^{-\rho}-\eps ,\qquad y\ge y_2.\label{eq:dellam}
\ee
\item[(iii)] By Lemma\,\ref{lem:LHop}(v)  in the case $\rho=\infty$  (and recalling the definition of rapidly varying index $-\infty$, following \eqref{eq:rhoinf}), for any $M>0$
there exists   $y_3=y_3(\lam,M)>0$ such that
\be
\frac{f(\lam y)F(\lam y)}{\lam f(y)F ( y)}\ge M ,
 \qquad y\ge y_3.\label{eq:liminfy1}
\ee
\ei
\end{rem}

\subsection{Proof of Theorem\,C(a)(i): Local well-posedness}

\begin{proof}
Let   $\Phi\in\mathcal{N}$ and    $f$ satisfy    {\bf (S)} and {\bf (G)}.
  We wish to  apply Theorem\,B(a)  to deduce well-posedness of \eqref{eq:she} in $\Mphi$.    
  We  need only verify the integral condition  \eqref{eq:IC}. 

Let $\rho\in (0,\infty]$ and $\Phi \gtrsim f$. By  {\bf (S)} and {\bf (G)}, $f$ is (eventually) invertible and    $\ell (u) \le Cf'(u)$ for $u>0$ large enough.  By Remark\,\ref{rem:Young}(b), 
$\Phi$ is also invertible.    
Hence there exist $k >0$ such that $ \Phinv ( u)\le  kf^{-1}(u)$  for $u>0$ large enough. 

Regarding \eqref{eq:IC}, by Remark\,\ref{rem:I2I3}(d) and \eqref{eq:epsfF}  we have for $a>0$ large enough 
\begin{align}
\int_a^{\infty} x^{-\pF}\ell\left(  \lam_0    \Phinv\left(x   \right)\right)\dee x &\le 
C\int_a^{\infty} x^{-\pF}f' \left(  \lam   f^{-1}\left(x   \right)\right)\dee x \nonumber\\
&=C\int_b^{\infty} f(y)^{-\pF}f' (\lam y)f'(y)\dee y \label{eq:intlater}\\
&\le 
C\int_b^{\infty} \left[f(y)^{\pF}F (\lam y)F(y)\right]^{-1}\dee y,\label{eq:intF}
\end{align}
where  $b=f^{-1}\left(a \right)$ and  $\lam:=k\lam_0 $ with $\lam\in (0,1)$  to be chosen later. 
Now  consider the integrand in \eqref{eq:intF}, setting
%\be\label{eq:h}
\[h(y):=\left[f(y)^{\pF}F ( y)F(\lam y)\right]^{-1}.
\] 
We wish to show that $h$ is integrable for large $y$.

Fix $M >n/2$. For $0<\eps <2/(n \pF)<1$  let
\be\label{eq:A1}
\A_1(\rho) :=
\begin{cases}
{(2/n-\eps \pF)(\lam^{-\rho}-\eps)-1}, & \rho\in (0,\infty),\\
{(2/n-\eps \pF)M-1}, & \rho = \infty.
\end{cases}
\ee 
For fixed $\rho\in (0,\infty)$  we see from the first of \eqref{eq:A1}  that $\A_1(\rho)>0$ for  $\lam >0$ sufficiently small. In the case $\rho =\infty$,  we see from the second of \eqref{eq:A1} that $\A_1(\rho)>0$ for   $\eps >0$ sufficiently small. 
We now fix  $\lam$ and $\eps$ in this way so   that $\A_1(\rho) >0$.

By  \eqref{eq:epsfF}, \eqref{eq:dellam} and \eqref{eq:liminfy1},  for $y$ large enough ($y\ge y_0$ say) we have
%{\allowdisplaybreaks
\begin{align*}
h'(y)&=
%\begin{multlined}[t]
-h(y)^2\left[\pF f(y)^{2/n}f'(y)F(y)F(\lam y)+f(y)^{\pF}F'(y)F(\lam y)
%\right.\\\left.
+\lam f(y)^{\pF}F(y)F'(\lam y)\right]
%\end{multlined}
\nonumber\\
&=-h(y)\left(\pF\frac{f'(y)}{f(y)}+\frac{F'(y)}{F ( y)}+\frac{\lam F'(\lam y)}{F(\lam y)}\right)\nonumber\\
&=-h(y)\left(\pF\frac{f'(y)}{f(y)}-\frac{1}{f(y)F ( y)}-\frac{\lam }{f(\lam y)F(\lam y)}\right)\nonumber\\
&\le -h(y)\left(\frac{\pF(1-\eps)-1}{f(y)F ( y)}-\frac{\lam}{f(\lam y)F(\lam y)}\right)\nonumber\\
&= -\frac{{\lam} h(y)}{f(\lam y)F(\lam y)}\left(\frac{(2/n-\eps \pF)f(\lam y)F(\lam y)}{{\lam}f(y)F ( y)}-1\right)\nonumber\\
 &\le -\frac{\A_1(\rho){\lam } h(y)}{f(\lam y)F(\lam y)}\le  -  \frac{\A_1(\rho)}{1+\eps}\frac{\lam f'(\lam y)}{f(\lam y)}h(y)\nonumber\\
&=-A_1 (\log (f(\lam y)))'h(y),\nonumber
\end{align*}
%}
where
\[
A_1=\A_1(\rho)/(1+\eps)>0.
\]
Integration of this inequality then yields
\be\label{eq:hint}
h(y)
\le  C[f(\lam y)]^{-A_1},\qquad y\ge y_0.
\ee
But by Lemma\,\ref{lem:LHop}(ii),  $f$ grows faster than any polynomial at infinity and so $h$  is integrable on  $[y_0,\infty )$.
Hence \eqref{eq:she} is well-posed in $\Mphi$ in the sense of  Theorem\,B(a)-(c).
\end{proof}

\subsection{Proof of Theorem\,C(a)(ii): Uniqueness of Mild Solutions}

\begin{proof}

  We  show  uniqueness in the larger class $C([0,T],\Mphi)$ of $\Mphi$-mild solutions   under the additional assumptions  that $|f(u)|\le C f(|u|)$ for all $u\in\R$   and 
  $\Phi (u)\gtrsim f(u)^r$ near zero, for some $r\ge 1$ and $r>n/2$ (recall \eqref{eq:qzero}).

To this end, we claim first  that   there  exist $C,\mu >0$  such that  $f (u )^r\le C\Phi (\mu u )$ for all $u\ge 0$. 
Since  $\Phi \gtrsim f^r$ near zero there exist $ \mu_1,u_1>0$ such that  
 \be\label{eq:C1}
 f(u)^r\le  \Phi ( \mu_1 u ),\qquad u\in [0,u_1].
 \ee 
 By assumption $\Phi\gtrsim f$ and by Lemma\,\ref{lem:LHop}(iii), $f^r\approx f$. Hence $\Phi\gtrsim f^r$ and  so there exist and $\mu_2,u_2>0$ such that 
\be\label{eq:C2}
f(u)^r\le \Phi(\mu_2 u) ,\qquad u\ge u_2.
\ee
By continuity,  \eqref{eq:C1}, \eqref{eq:C2} and the positivity of  $\Phi $  on the compact interval $[u_1,u_2]$, it  follows  that there exists $C>0$ such
\be\label{eq:frbound}
\sup_{u>0 }\frac{ f( u)^r}{\Phi(\mu u)}\le C,
\ee
where $\mu=\max\{\mu_1,\mu_2\}$ and the claim follows.

We now use the bound \eqref{eq:frbound} to show  that any $M^{\Phi}$-mild solution is necessarily an $M^{\Phi}$-classical one.  Uniqueness of $M^{\Phi}$-mild solutions will then follow from Theorem\,B(b).
So let $v\in C([0,\tau],\Mphi )$ be any $M^{\Phi}$-mild solution  of \eqref{eq:she}.  
We show that $v\in L^{\infty}_{\loc}\left((0,T),L^{\infty} \right)$ for  $T\in (0,\tau)$ sufficiently small, so that $v$ is necessarily an $M^{\Phi}$-classical solution
for small positive times. Uniqueness on  $(0,T_{\vphi})$ then follows by classical $L^{\infty}$-theory.

For any $T\in (0,\tau)$, $v$ satisfies the integral equation
\[
v(t)=S(t)\vphi+\int_0^t S(t-s) f(v(s))\dee s,\qquad t\in (0,T),
\]
with $v(0)=\vphi$. By Corollary\,A(i),   $S(\cdot)\vphi\in L^{\infty}_{\loc}\left((0,T),L^{\infty} \right)$ so it  remains only to show that the function
\be\label{eq:vlocbdd}
\left\{
t\mapsto \int_0^t S(t-s) f(v(s))\dee s
\right\}
\in L^{\infty}_{\loc}\left((0,T),L^{\infty} \right),
\ee
for  $T>0$ sufficiently small.      
Since $v\in C([0,T],\Mphi )$  we can choose  $T$ sufficiently small such that
\[
\sup_{s\in (0,T)} \|v(s)-\vphi\|_{\Phi}\le \frac{1}{2\mu}.
\]
By  definition of $\|\cdot\|_{\Phi}$ we then have,  for all $s\in (0,T)$,
\[
\Is \Phi\left(   2\mu |v(x,s)-\vphi(x)|\right)\dee x\le 1.
\]
By assumption $|f(u)|\le C f(|u|)$, \eqref{eq:frbound}  and  convexity of $\Phi$,
\begin{align*}
\|f(v(s))\|_r^r &
\le C \Is \left[f(|v(x,s)|)\right]^r\dee x \le  C\Is \Phi(\mu |v(x,s)|)\dee x\\
&\le  \frac{C}{2}\Is \Phi\left(   2\mu |v(x,s)-\vphi(x)|\right)+\Phi\left(   2\mu |\vphi(x)|\right)\dee x\\
&\le  {C}\left(1+\Is\Phi\left(   2\mu |\vphi(x)|\right)\dee x\right)<\infty,
\end{align*}
recalling that  $\vphi\in\Mphi$. Hence,
%\be\label{eq:Lrbound}
\[\sup_{s\in (0,T)}\|f(v(s))\|_r<\infty .
\]
Recalling that  $r\ge 1$ and $r>n/2$,  by standard $L^r$-$L^{\infty}$ smoothing of the heat semigroup we have
%\begin{align*}
\[ \sup_{t\in (0,T)}\left\| \int_0^t S(t-s) f(v(s))\dee s\right\|_{\infty}
 \le  C\sup_{t\in (0,T)}\int_0^t (t-s)^{-n/(2r)}\dee s\le \frac{2Cr}{2r-n}T^{1-\frac{n}{2r}}<\infty ,
\]
%\end{align*}
and \eqref{eq:vlocbdd} follows.
\end{proof}

\subsection{Proof of Theorem\,C (a)(iii): Nonexistence}

\begin{proof}
 Let   $\Phi\in\mathcal{N}$ and  
 $f$ satisfy     {\bf (S)}  and {\bf (G)}. 
 Assumptions {\bf (S)} and  {\bf (G)} allow us to utilise  results of \cite{FHIL2} which, among other things,  provide  sufficient conditions on $f$ and the initial data $\varphi$ for nonexistence of nonnegative integral solutions of \eqref{eq:she}. Specifically, \cite[Corollary\,1.1]{FHIL2}  guarantees that there exists $\G >0$ such that
 \eqref{eq:she} possesses no nonnegative integral solution for the initial datum
\be\label{eq:vc}
\vphi_c (x) =F^{-1}(\G |x|^{2}),\qquad x\in\R^n\backslash\{0\}.
\ee
Then \cite[Theorem\,1.1(a)]{FHIL2}  ensures the same is true for the initial datum
\be\label{eq:nonexist}
\vphi (x) =F^{-1}(\G |x|^{2})\chi_R(x),\qquad x\in\R^n\backslash\{0\},
\ee
for any $R>0$.   (Note: our assumptions {\bf (S)} and {\bf (G)} here ensure that assumptions `M', `S', `C' and `L' in \cite{FHIL2} hold.  In the  terminology of \cite{FHIL2} we have $q_f=1$, $p_f=\infty$ and  $f$ is `supercritical'. By utilising  \cite[Corollary\,1.1]{FHIL2} we ensure  that $ \vphi_c$ in \eqref{eq:vc} is  locally integrable,  so that the same is true of $\vphi$
in \eqref{eq:nonexist}. This in turn allows us to  apply \cite[Theorem\,1.1(a)]{FHIL2} with initial data $\vphi$.)
It therefore suffices to show  that $\vphi$ as given by \eqref{eq:nonexist} satisfies $\vphi\in\Lphi$.

Since $\Phi \lesssim f$ there exist $ K, u_0 > 0$ such that $\Phi (u)\le f(K u )$  for
all $u \ge u_0$. Set $\lam=K\lam_0 $ (with $\lam>0$  to be chosen later) and choose $R=R(\lam)>0$ small enough  such that $\lam_0F^{-1}(\G R^2)\ge u_0$. Since $\vphi$ is radially symmetric and decreasing,  for all $x\in B_R\backslash\{0\}$ we have  
$\lam_0\vphi (x)\ge \lam_0F^{-1}(\G R^2)\ge u_0$.  Hence,
\begin{align}
\Is \Phi \left(\lam_0 \vphi (x) \right) \dee x &\le  \int_{B_R} f \left(\lam_0 K\vphi (x) \right) \dee x
 =  C\int_0^R f \left(\lam F^{-1}(\G r^{2}) \right)r^{n-1} \dee r\nonumber\\
&= 
C\int_{b}^{\infty} \frac{f (\lam s)}{f(s )}[F(s)]^{(n-2)/2} \dee s\label{eq:intf1},
\end{align}
where  $b=F^{-1}(\G R^{2})$, recalling that $F(u)\to 0$ as $u\to\infty$. 

Now set
%\be\label{eq:g}
\[ g(s):=\frac{f(\lam s)}{f(s )}[F(s)]^{(n-2)/2}.
\]
By \eqref{eq:intf1} we wish to show that $g$ is integrable for some $\lam>0$.

Fix $M >2/n$. For  $0<\eps <n/2$ define  the constant
\be\label{eq:A2}
\A_2(\rho) :=
\begin{cases}
{(n/2-\eps)(\lam^{-\rho}-\eps )-(1+\eps)}, & \rho\in (0,\infty),\\
{(n/2-\eps )M-1}, & \rho = \infty.
\end{cases}
\ee 
For fixed $\rho\in (0,\infty)$  we see from the first of \eqref{eq:A2}  that $\A_2(\rho)>0$ for  $\lam >0$ sufficiently small. In the case $\rho =\infty$,  we see from the second of \eqref{eq:A2} that $\A_2(\rho)>0$ for   $\eps >0$ sufficiently small. 

We now fix  $\lam$ and $\eps$ in this way so   that $\A_2(\rho) >0$.
By  \eqref{eq:epsfF}, \eqref{eq:dellam} and \eqref{eq:liminfy1},  for $s$ large enough we have
%%%%%%%%%%%%%%%%%%%%%%%%%%%%%%%%%%%%%%%%%%
\begin{align*}
g'(s)&=
-g(s)\left( \frac{ f'(s)}{f(s)}-\lam \frac{ f'(\lam s)}{f(\lam s)}+\frac{(n-2)}{2f(s)F(s)}\right)\nonumber\\
&\le  -g(s)\left( \frac{1-\eps}{f(s)F(s)}-\frac{ \lam (1+\eps)}{f(\lam s)F(\lam s)}+\frac{(n-2)}{2f(s)F(s)}\right)\nonumber\\
&=  -\frac{\lam g(s)}{f(\lam s)F(\lam s)}\left( (n/2-\eps)\frac{f(\lam s)F(\lam s)}{\lam f(s)F(s)}-  (1+\eps)\right)\nonumber\\
&= -\frac{\A_2(\rho)\lam g(s)}{f(\lam s)F(\lam s)}\le -\frac{\A_2(\rho)}{(1+\eps)} \frac{\lam f'(\lam s)}{f(\lam s)}g(s)\nonumber\\
&=-A_2 (\log (f(\lam s)))'g(s),
\end{align*}
where
\[
A_2={\A_2(\rho)}/{(1+\eps)}>0.
\]
The integrability of $g$ for large $s$ then follows in the same way as for  $h$ in \eqref{eq:hint}.
\end{proof}

\subsection{Proof of Theorem\,C(b): Global Well-posedness.}

\begin{proof} 
Clearly the assumptions of Theorem\,C(a) hold so the local well-posedness statement is trivial. Regarding global well-posedness, recalling \eqref{eq:Iglobal} and  
{\bf (S)} we have
\[ \int_0^{\infty} x^{-\pF}\ell\left(  \lam    \Phinv\left(x   \right)\right)\dee x\le C \int_0^{\infty}H(\lambda, x) \dee x,
\]
where
\[
H(\lambda, x):=x^{-\pF}f'\left(  \lam    \Phinv\left(x   \right)\right).
\]
Since  $f'(0)=0$,  $H(\lambda, x)\to 0$ as $\lambda \to 0$ pointwise in $x>0$  and   since $f$ is convex, we have $ 0\le H(\lambda, x) \le H(\lambda_0, x)$
for all $\lam\le\lam_0$ and  $x>0$. If we can show that 
$H(\lambda', x)$ is integrable on $(0,\infty)$ for some $\lam'>0$, then global well-posedness   will follow by the dominated convergence theorem and Corollary\,B. 
Clearly it is sufficient to verify only the integrability of $H$ for $x$ near zero and $x$ near infinity. By assumption,  $H(\lambda_0, x)$ is integrable  near zero.  Since $\Phi\gtrsim f$, the integrability of $H$ at infinity for some $\lam_1\in (0,1)$ has already been shown in the proof of Theorem\,C(a)(i)  (recall the calculation around \eqref{eq:intlater}). Hence $H(\lambda', x)$ is integrable on $(0,\infty)$ with  $\lam'=\min\{\lam_0,\lam_1\}$, so  by the dominated convergence theorem
\[
\lim_{\lam\to 0}\int_0^\infty H(\lambda, x)\dee x=0.
\]
Thus, \eqref{eq:Iglobal} holds for all $\lam >0$ sufficiently small and the  result follows by Corollary\,B.
\end{proof}

\subsection{Proof of Corollary\,C: Local and Global Well-posedness.}

\begin{proof} Again  the assumptions of Theorem\,C(a) hold so the local well-posedness statement is trivial. For global well-posedness we verify  the hypotheses of Theorem\,C(b).  

By  assumption $\Phi \gtrsim f$.  
Since $f'$ is regularly varying of index $m-1$ at zero, 
$f'(x)=x^{m-1}\mu (1/x)$   
for  $x\in (0,1)$, where $\mu$ is slowly varying at infinity. Clearly  $f'(0)=0$, recalling the properties of regularly varying functions following \eqref{eq:kara1}. 
Now we check   \eqref{eq:intCorC}.  Since $\Phi(u)\gtrsim  u^q$ near zero, there exist $K,a>0$ such that $\Phinv(u)\le K u^{1/q}$ for  $u\in (0,a)$.  Hence, for $0<q<n(m-1)/2$ and with $\lam =\lam_0K$ and  $b=a^{1/q}$,
\begin{align*}
\int_0^a x^{-\pF}f'\left(  \lam_0    \Phinv\left(x   \right)\right)    \dee x  & \le  \int_0^a x^{-\pF}f'\left(  \lam   x^{1/q}\right)    \dee x=C\int_0^b  y^{-1-2q/n}f'(\lambda y) \dee y\\
&=C\int_0^b  y^{m-2-2q/n}\mu(1/(\lam y))\dee y\\
&=  C \int_{1/b}^\infty z^{-m+2q/n}\mu(z)  \dee z <\infty
\end{align*}
since  $-m+2q/n<-1$ and $\mu$ is slowly varying at infinity,  again recalling the properties of regularly varying functions following \eqref{eq:kara1}.  Hence  \eqref{eq:intCorC} holds.
\end{proof}

%%%%%%%%%%%%%%%%%%%%%%%%%%%%%%%%%%%%%%%%%%%%%%
\section{Applications}\label{sec:Apps}
%%%%%%%%%%%%%%%%%%%%%%%%%%%%%%%%%%%%%%%%%%%%%%
We illustrate our results with  some applications and examples of both FP-type and P-type.  In order to more easily separate  the behaviour of $f$ (and therefore $\Phi$) near infinity and  zero, in Theorem\,\ref{thm:J} we introduce a family of  odd nonlinearities of the form $f(u)=u^mJ(u)$ (for $u\ge 0$) to which Theorem\,C or Corollary\,C apply. 
Since it is  not immediately obvious that there do indeed exist functions of arbitrarily large growth rate satisfying the hypotheses of  Theorem\,C or Corollary\,C, we show in Lemma\,\ref{lem:rapid} that the family in Theorem\,\ref{thm:J}  contains a very large sub-family  of such functions (at least as many as there are positive increasing convex functions). We also present two examples of exponential type from this family; one of a type studied previously in the literature, where $\rho$ is finite \cite{FKRT,I1,IRT1,IRT2, MT1,RT1}, the other a composition of exponentials with $\rho=\infty$ which, as far as we know, has not been 
considered in the literature in the context of heat equations in Orlicz spaces, or indeed any other Banach space (excepting the trivial case of $\Linf$).   We stress that our choice of exponential functions is purely for expositional convenience, with no reliance on any particular structural properties such as those required for smoothing estimates  in previous studies. Finally we consider  two examples of P-type to which  Theorem\,B and Corollary\,B apply. 
The first  is the classical Fujita equation set in Lebesgue space for which the standard theory in \cite{BC,W80, W81} already applies and thus acts as a comparator.   The second example is a log-corrected Fujita equation for which our theory provides sharper results via Orlicz spaces than those that can be obtained from  \cite{BC,W80} in Lebesgue spaces.

%%%%%%%%%%%%%%%%%%%%%%%%%%%%%%%%%%%%%%%%%%%%%%
\subsection{A Canonical Family of Nonlinearities}\label{sec:J}

 We consider  odd nonlinearities of the form $f(u)=u^mJ(u)$ for $u\ge 0$, where $J(0)>0$ and $J$ is of FP-type. Thus $f$ behaves like $u^m$ as $u\to 0$
and like $J(u)$ as $u\to\infty$, with $J$ encapsulating the FP-type growth of $f$ at infinity.

\begin{theorem} \label{thm:J}
Suppose $J\: [0,\infty)\to [0,\infty)$ satisfies the following conditions: 
\bi
\item[{\bf (S$^\prime$)}] $J(0)>0$,  $J\in C^1$ is increasing and  convex on $[0,\infty)$ and eventually $C^2$.
%\label{ass:C}
\item[{\bf (G$^\prime$)}] $ (\log J)'$ is { regularly varying} of index $\rho-1$ with  $\rho\in (0,\infty]$ and 
\newline $\displaystyle{ \lim_{u\to\infty} \frac{(J'(u))^2}{J''(u)J(u)}  =1}$. 
\ei
For any $m \ge 1$, let  $f:\R\to\R $ be odd with $f(u):=u^mJ(u)$ for $u\ge 0$.
\bi[leftmargin=2em]
\item[(a)]    
\bi[leftmargin=*]
\item[(i)] (Local well-posedness) If      $\Phi\in\mathcal{N}$ and  $\Phi \gtrsim J$ then 
the conclusions of Theorem\,B(a-c) all hold.   If   in addition $\Phi(u) \gtrsim u^{d}$  as $u\to 0$
for some  $d>1$ then uniqueness holds in the  class  $C([0,T],\Mphi )$  of    $M^{\Phi}$-mild solutions.
\item[(ii)] (Nonexistence) If    $\Phi\in\mathcal{Y}$  and $\Phi \lesssim J$ then there exists nonnegative $\vphi\in\Lphi$ for which \eqref{eq:she} has no nonnegative integral solution.
\ei
\item[(b)]   (Local and  Global well-posedness)
 Suppose    $m>\pF$ and  $1<q<n(m-1)/2$.  If $\Phi\in\mathcal{N}$, $\Phi (u)\gtrsim u^q$ as $u\to 0$ and  $\Phi \gtrsim J$, then
the conclusions of  Theorem\,B(a-c) and Corollary\,B (for some $\lam>0$) both  hold.
\ei
\end{theorem}

%%%%%%%%%%%%%%%%%%%%%%%%%%%%%%%%%%%%%%%%%%%%%%

\begin{proof}
We verify the relevant hypotheses  of Theorem\,C and Corollary\,C. 
Firstly, note that since $J$ satisfies {\bf (S$^\prime$)}, it is obvious that $f$ satisfies  {\bf (S)} (recall Remark\,\ref{rem:assC}).  We now check that $f$ satisfies  {\bf (G)}.

Let $h :=(\log J)'= J'/J$. By {\bf (G$^\prime$)}, if $\rho\in (0,\infty)$  then  $h(u)=u^{\rho-1}\sigma(u)$ for some slowly varying function $\sigma$, while if  $\rho =\infty$  then $u^{-k}h(u) \to \infty$ as $u \to \infty$ for any $k>0$ (Lemma\,\ref{lem:LHop}(ii)). By {\bf (G$^\prime$)},  $h(u)^2J(u)/J''(u)=(J'(u))^2/(J''(u)J(u))\to 1$ as $u\to\infty$, so for either $\rho$ finite or $\rho =\infty$,
\begin{align*}
   \lim_{u\to\infty} \frac{(f'(u))^2}{f''(u)f(u)} &= \lim_{u\to\infty}\frac{(\frac{m}{u} + h(u))^2}{\frac{m(m-1)}{u^2} + \frac{2m}{u} h(u) + \frac{J''(u)}{J(u)}}\\
    &=
    \lim_{u\to\infty} \frac{(\frac{m}{uh(u)} + 1)^2}{\frac{m(m-1)}{u^2 h(u)^2} + \frac{2m}{uh(u)} + \frac{J''(u)}{J(u)h(u)^2}}=1,
    %= \frac{(0 + 1)^2}{(0 + 0 + 1)} = 1,
\end{align*}  
so $f$ satisfies the limit condition in {\bf(G)}. 
Again by {\bf (G$^\prime$)},
\[ \lim_{u\to\infty} \frac{(\log f)'(\lam u)}{(\log f)'(u)} =
\lim_{u\to\infty} \frac{\frac{m}{\lam u} + h(\lam u)}{\frac{m}{u} + h(u)}=
\lim_{u\to\infty} \frac{ h(\lam u)}{ h(u)}
=\lim_{u\to\infty} \frac{(\log J)'(\lam u)}{(\log J)'(u)},
\]
so $f$ also satisfies the regular variation condition in {\bf (G)} with $\rho\in (0,\infty]$.

(a)(i): Suppose $\Phi\in\mathcal{N}$ and  $\Phi \gtrsim J$.  To apply part (a)(i) of  Theorem\,C we need to verify that $\Phi \gtrsim f$. It is therefore sufficient to show that
$J \gtrsim  f$.  If  $\rho\in (0,\infty)$ then  by Lemma\,\ref{lem:LHop}(i)    $J(u)=\exp (u^\rho \mu(u))$  for some slowly varying function $\mu$. For any $\lam >1$ we then have
\[
\frac{J(\lam u)}{u^mJ(u)}=u^{-m}\exp \left(u^\rho \mu(\lam u)\left(\lam^\rho -  \frac{\mu(u)}{\mu(\lam u)}\right)\right)
\sim  u^{-m}\exp \left(\left(\lam^\rho -  1\right)u^\rho \mu(\lam u)\right)\to\infty
\]
as $u\to\infty$. If $\rho =\infty$ then by Lemma\,\ref{lem:LHop}(i)-(ii)  (recalling \eqref{eq:rhoinf}) we have, for any $\lam >1$, 
\[
\lim_{u\to\infty}\frac{\log J(\lam u)}{\log (u^mJ(u))}=\lim_{u\to\infty}\frac{\log J(\lam u)}{m\log u +\log J(u)}=\lim_{u\to\infty}\frac{\log J(\lam u)}{\log J(u)}=\infty.
\]
Hence for $\rho\in (0,\infty]$, $J(u) \gtrsim u^mJ(u)=f(u)$.

The condition for uniqueness in the class of mild solutions is obvious, via   Theorem\,C(a)(ii) and \eqref{eq:phiinf} in Remark\,\ref{rem:thmC}.

(a)(ii): Suppose $\Phi\in\mathcal{\CY}$ and  $\Phi \lesssim J$.  To apply part (a)(iii) of  Theorem\,C we need to verify that $\Phi \lesssim f$. It is therefore sufficient to show that $J \lesssim f$.  The argument is identical to the one above for $J \gtrsim f$, simply interchanging $\lam>1$ for $\lam <1$.

(b): Since $f$ is $C^1$ and $J(0)>0$ it is clear that $f'$ is regularly varying at zero of index $m-1$.  By assumption $\Phi \gtrsim J$ and by the proof of part (a) above, $J\gtrsim f$. Hence $\Phi \gtrsim f$ and the result follows by Corollary\,C.
\end{proof}

%%%%%%%%%%%%%%%%%%%%%%%%%%%%%%%%%%%%%%%%%%%%%%

\subsection{Nonlinearities of Arbitrarily Rapid Growth}\label{sec:rapidJ}
Starting from essentially any positive, increasing, convex function   $r$, we construct a function $J$ satisfying the hypotheses of  Theorem\,\ref{thm:J} and growing at least as fast as 
$\exp\left( r(u)\right) $.  The crucial point being that since $r$ can grow arbitrarily fast, so can $J$, and therefore  the nonlinearity $f(u)=u^mJ(u)$ of Theorem\,\ref{thm:J}.

\begin{lemma}\label{lem:regvarminor}
Let $q:(0,\infty)\to(0,\infty)$ be a continuous function.
There exists a function $p:(0,\infty)\to(0,\infty)$ such that
\bi[leftmargin=3em]
\item[(i)] $0 < p(x) \le q(x)$ for all $x>0$;
\item[(ii)] $p(x) \to 0$ as $x\to\infty$;
\item[(iii)] $p$ is rapidly varying of index $-\infty$, i.e.,  for all $\lam  >1$,
$\displaystyle{\lim_{x\to\infty} \frac{p(\lambda x)}{p(x)} = 0.}$
\ei
\end{lemma}

\begin{proof}
Define $\psi\:(0,\infty)\to\R$ by
\[
\psi(x) = -\log q(x), \qquad x>0.
\]
Since  $\psi$ is  continuous we may define  
\[
\phi(x) = \max_{1 \le y \le x} \psi(y), \qquad  x\ge 1.
\]
Clearly $\phi$ is  increasing on $[1,\infty)$ and {for all } $x\ge 1$,
\[
\phi(x) \ge \psi(x),\qquad \phi(x) \ge \psi (1)=:c.
\]
Define $p:(0,\infty)\to(0,\infty)$ by  
\[
p(x) =
\left\{
\begin{array}{ll}
\min\bigl\{ q(x), \e^{-1-\phi(1)} \bigr\}, & 0<x<1,\\
\e^{-\phi(x)-x}, &  x\ge 1.
\end{array}
\right.
\]
We now verify that the three conditions are met by this $p$.

(i) Clearly $p(x)  \le q(x)$ for $0<x<1$. For $x\ge 1$, we have $\phi(x) \ge \psi(x)$, so
\[
p(x) = \e^{-\phi(x)-x} \le \e^{-\psi(x)} = q(x).
\]

(ii) For $x\ge 1$ we have
\[
0 < p(x) = \e^{-\phi(x)-x} \le \e^{-c-x} \to  0\quad\text{as}\quad x\to\infty.
\]

(iii)  Let $\lambda >1$. Since $\phi$ is increasing, for $x\ge 1$ we have
\[
\frac{p(\lambda x)}{p(x)}
= \exp\bigl(-(\phi(\lambda x)-\phi(x) + (\lambda-1)x)\bigr)
\le \exp\bigl((1-\lambda)x\bigr)\to 0\quad\text{as}\quad x\to\infty.
\]
Hence $p$ is rapidly varying of  index $-\infty$.
\end{proof}

\begin{lemma}\label{lem:rapid}
Let $r\:[0,\infty)\to [0,\infty)$ be  any $C^2$  function satisfying  $r''>0$ on $(0,\infty)$,  $r(0)=0$, $r'(0)=1$ and $r'(\infty)=\infty$.  There exists a function  $J\: [0,\infty)\to [0,\infty)$ satisfying the hypotheses of Theorem\,\ref{thm:J}  such that $J(x)\ge \exp(r(x))$ for all $x\ge 0$.
\end{lemma}

\begin{proof}
For $x>0$ let   $q(x):=-(1/r'(x))'>0$ and  $p:(0,\infty)\to(0,\infty)$ be as in Lemma\,\ref{lem:regvarminor}. By the assumptions on $r$ we have
$\int_0^\infty q(x)\dee x=1$.  Hence by parts (i) and (ii)  of  Lemma\,\ref{lem:regvarminor} respectively we have $p\in L^1(0,\infty)$ and  $p(x)\to 0$ as $x\to\infty$.

Now set 
\[
g(x) = \left({\int_x^\infty p(s) \,\dee s}\right)^{-1},\qquad 
J(x) = \exp \left( \int_0^x g(s) \,\dee s \right),\qquad x\ge 0,
\]
so that $g$ and $J$ satisfy the ODEs
\[ g'=p(x)g^2,\qquad g(0)= \left({\int_0^\infty p(s) \,\dee s}\right)^{-1},\]
and
\[J'=g(x)J, \qquad J(0)=1.
\]
Clearly  $g$ and $J$ are both positive and increasing and thus by its ODE, $J$ is convex. It is then easy to see that $J$  satisfies  {\bf (S$^\prime$)}  of Theorem\,\ref{thm:J}. 

Now we check {\bf (G$^\prime$)}. Firstly, 
\[
\frac{(J')^2}{J''J} = \frac{(gJ)^2}{(g'J + gJ')J} =  \frac{1}{p(x) + 1}\to 1\quad\text{as}\quad x\to\infty. 
\]
Next,    $(\log J)'= J'/J=g$ and for any $\lam >1$ we have, by   l'H\^opital's rule and  Lemma\,\ref{lem:regvarminor},
\[ \lim_{x\to\infty}\frac{g(\lambda x)}{g(x)} 
=\lim_{x\to\infty} \frac{\int_x^\infty p(s) \,\dee s}{\int_{\lam x}^\infty p(s) \,\dee s}
=\lam^{-1}\lim_{x\to\infty}\frac{p(x)}{p(\lambda x)}=\infty.
\]
Thus  $(\log J)' $ is rapidly varying of index $\rho=\infty$ and $J$  satisfies  {\bf (G$^\prime$)}. 

Finally,  for all $x> 0$,
\[
\int_x^\infty p(s) \,\dee s\le\int_x^\infty q(s) \,\dee s
=\int_x^\infty -(1/r'(s))' \,\dee s=\frac{1}{r'(x)}
\]
and so
%\begin{align*}
\[ J(x) = \exp\left(\int_0^{x} g(s) \dee s\right) \ge \exp\left(\int_{ 0}^{x} r'(s) \dee s\right)
=\exp\left( r(x) \right).\]
\end{proof}

\subsection{Exponential Nonlinearity (FP-type)}
 \label{sec:exp}
For $ m,p\ge 1$  consider 
\begin{equation}\label{eq:fexpp} 
u_t=\Lap  u+|u|^{m-1}u\e^{|u|^p}.
\end{equation}
With $J (u)= \exp (u^p)$ for $ u\ge 0$ 
and $f$ being the odd extension of $f(u)=u^mJ(u)$, we may apply Theorem\,\ref{thm:J}.  Conditions {\bf (S$^\prime$)} and {\bf (G$^\prime$)} are easily verified, with $\rho =p$.
We therefore obtain  local well-posedness of $M^{\Phi}$-classical solutions for any       $\Phi\in\mathcal{N}$   satisfying  $\Phi \gtrsim \exp (u^p)$, with solutions being unique in $C([0,T],\Mphi )$ if  $\Phi(u) \gtrsim u^{d}$  as $u\to 0$ for some  $d> 1$. Nonexistence of a local nonnegative integral solution  pertains  for some $\vphi\in\Lphi$ if   $\Phi\in\mathcal{Y}$  and $\Phi \lesssim f$.
 By Theorem\,\ref{thm:J}(b), if $m>\pF$ and  $1<q<n(m-1)/2$  then we obtain both local well-posedness in $\Mphi$ and global well-posedness for small initial data 
 in $\Mphi$ with  $\Phi (u)=u^q\exp (u^p)$ with solutions satisfying the decay estimate
 \[ \|u(t;\vphi)\|_{\infty}\le \lam\Phinv (t^{-n/2})\sim  \lam t^{-\frac{n}{2q}}\quad \text{as}\ t\to\infty,\]
for small $\vphi\in\Mphi $.

So far as we can tell, problem \eqref{eq:fexpp}  represents the  only class of FP-type problems studied in Orlicz  spaces within the literature on nonlinear heat  equations \cite{FK, FKRT,I1, IRT1,IRT2, MT2, MT1,RT1} - see Section\,\ref{sec:background} around \eqref{eq:expLf}. Furthermore, previous studies consider only  the choice  $\Phi(u)=\e^{u^p}-1$ and $p>1$.

It has been  shown in \cite[Theorem\,1.2]{MT1}  that global weak solutions exist for small initial data in  $\exp L^p$ whenever $ m\ge p>1$ and $m\ge 1+2p/n$. This permits the critical case $p=n(m-1)/2$ provided $m\ge p>1$.  In all cases the solutions take on the initial data in a much weaker sense  than  here (i.e., weak${}^*$).
Thus when $ m\ge p>1$ and $m\ge 1+2p/n$ both hold, \cite{MT1} is stronger permitting the critical case. If either of the conditions $ m\ge p>1$ or $m\ge 1+2p/n$ fail then \cite{MT1} is not applicable but our  results still apply for  $m>\pF$, independently of $p$. 
   Our solutions also take on the initial data in a  strongly continuous sense as $t\to 0$.    Moreover the case  $p=1$ is permitted here, in contrast to other works.  This is because our  results allow us greater freedom in choosing  $\Phi$. If one works only in $\exp L^p$   then one must have $p>1$ in order that the defining Young's function $\Phi(u)=\exp(u^p)-1$ satisfy the decay condition  for an $N$-function near zero.

%%%%%%%%%%%%%%%%%%%%%%%%%%%%%%%%%%%%%%%%%%%%%%%%
\subsection{Doubly Exponential  Nonlinearity (FP-type)}
%%%%%%%%%%%%%%%%%%%%%%%%%%%%%%%%%%%%%%%%%%%%%%%%
\label{sec:doubleexp}

For $ m,p\ge 1$  consider the problem 
%\begin{equation}\label{eq:expk} 
\[ u_t=\Lap  u+|u|^{m-1}u\e^{\e^{|u|^p}}.
\]
%\end{equation} 
With $J (u)= \exp\exp (u^p)$ for $ u\ge 0$ and $f$ being the odd extension of $f(u)=u^mJ(u)$,  one may carry out the same checks as in Section\,\ref{sec:exp} and  draw the
similar  conclusions via Theorem\,\ref{thm:J}. Conditions {\bf (S$^\prime$)} and {\bf (G$^\prime$)} are easily verified, with $\rho =\infty$.  Local well-posedness follows for 
$\Phi (u)\gtrsim \exp\exp (u^p)$ and global well-posedness with $\Phi (u)=u^q\exp\exp (u^p)$ if $m>\pF$ and  $1<q<n(m-1)/2$.

There do not appear to be any works in the literature for this kind of problem set in Orlicz {spaces}. Indeed, the fact that $\rho =\infty$ means that the special procedure described in Section\,\ref{sec:background}  for obtaining smoothing estimates for the heat semigroup between spaces like $L^q$ and $\exp L^q$, cannot be carried out.

Analogous  results can be obtained for nonlinearities of this type with $J$ being any finite number of compositions of the exponential function.

 \subsection{Power Law Nonlinearity (P-type)}

For  $p> 1$ consider the Fujita equation
\begin{equation}\label{eq:fup} 
u_t=\Lap  u+|u|^{p-1}u.
\end{equation}
In the context of Lebesgue spaces, where
%\be\label{eq:appup}
 \[ \Phi (u)={u^q}/q,\qquad q\ge 1,
\]
the well-known results of \cite{BC,W80} ensure the well-posedness of  $L^q$-classical solutions as follows:  if $1<p<\pF$ then \eqref{eq:fup} is well-posed in $L^1 $; if $p=\pF$ then  \eqref{eq:fup} is well-posed in $L^q $ for any $q>1$; if $p>\pF$ then \eqref{eq:fup} is well-posed in $L^q $ for any $q\ge q_c:=n(p-1)/2>1$. 

Let us now compare the results of  \cite{BC,W80} with those from  Theorem\,B. For $p>\pF$, we  see that \eqref{eq:IC} is satisfied   if and only if $q> q_c$, with  $q>1$ ensuring that $\Phi$ is an $N$-function. Thus Theorem\,B covers the subcritical range $q>q_c$  but not the critical case $q= q_c$ of \cite{BC,W80}.  In fact this is  unsurprising since the classical results for the critical case are obtained via bespoke methods using  special properties (e.g. homogeneity) of both $f$ and  $\Phi$ for Lebesgue spaces (see also \cite{RS}), whereas our result derives from very large classes of  $f$ and $\Phi$. 
For $p=\pF$ we obtain the same results as in \cite{BC,W80}.  For $1<p<\pF$, we would like to choose $q=1$, but $\Phi$ would  not then be an $N$-function. In fact (though we do not detail it here since our primary interest is in FP-type nonlinearities), we do not actually require $\Phi$  to be an $N$-function in this particular case;  in our general setting it  is indeed   a sufficient (but not necessary) condition to ensure that $S(t)$ is a $C_0$-semigroup on $M^{\Phi}$, as per Theorem\,A(a)(iii), and thus obtain the well-posedness results of Theorem\,B.  But in the case of Lebesgue spaces,  $M^{\Phi}=L^{\Phi}$  so we can  take $q=1$ and obtain the same results as   \cite{BC,W80}. We mention that for $q=1$ one may  consider more general nonlinearities than those of power law type, as in \cite{LRSV,LS20}.

In the critical Fujita case $p=\pF$, one may go beyond the Lebesgue spaces  of \cite{BC,W80}  and consider the Orlicz space $L^1\log^r L$ with $N$-function
\be\label{eq:appuplog}
 \Phi (u)=u\log^r (1+u),\qquad r>0.
\ee
Recalling  \eqref{eq:LEP0inf}, it is easy to check that \eqref{eq:IC} is satisfied   if and only if $r>n/2$, whence Theorem\,B is again applicable. This yields a sharper result than \cite{BC,W80}, where well-posedness in $L^q$ is  guaranteed only  for $q>1$.  In \cite[Theorem\,1.3]{M21},  $r=n/2$ is permitted for existence but the setting  is  an Orlicz \emph{class} (\cite[Section 8.7]{AF}),  defined for a different  $\Phi$ to that in  \eqref{eq:appuplog} and one which is not an $N$-function, and therefore not necessarily a  linear {space}.  For $r>n/2$ our uniqueness result is unconditional, whereas  \cite[Theorem\,1.5]{M21} imposes growth bounds on the solution as $t\to 0$, akin to that in \cite[Theorem\,4]{W80}. 

Finally, let us consider the question of global solutions  when $p>\pF$, with 
%\be\label{eq:appupint}
 \[ \Phi (u)=\max\{u^q,u^r\},\qquad 1<q<r.
\]
Since $\Phi\in{\mathcal{N}}\cap \Delta_2$, by Lemma\,\ref{lem:L0M}(iii) and \cite[Theorem\,12.1(a)]{Mal} it follows that $\Mphi=\Lphi=L^q \cap L^r  $.
Condition \eqref{eq:Iglobal} of Corollary\,B is then seen to hold provided that $q<q_c<r$. We note that the limiting case $q=r=q_c$ represents the  result of  \cite[Theorem\,3(b)]{W81} in  $L^{q_c}$, although it should be noted that only positive solutions were considered there.

\subsection{Log-Corrected Power Law Nonlinearity (P-type)}

For  $p> 1$ and $m>0$, consider the following logarithmically-corrected Fujita equation
\begin{equation}\label{eq:fuplog} 
u_t=\Lap  u+|u|^{p-1}u\log^m (1+|u|).
\end{equation}
The solvability  of this problem was considered in \cite{FHIL1} for positive Radon measure initial data. Again one may choose to consider this problem in   $L^q $ and utilise the results in \cite{BC,W80} since the nonlinearity in \eqref{eq:fuplog} is majorised at infinity by $|u|^{p'-1}u$, for any $p'>p$.  If $1<p<\pF$ then  we may choose $p'\in (p,\pF)$ and \eqref{eq:fuplog} is well-posed in $L^1$. (Alternatively, one may obtain well-posedness  in $L^1$  directly from the results in \cite{LS20}, without first having to majorise the nonlinearity.)  If $p\ge \pF$ then \eqref{eq:fuplog} is well-posed in $L^q$ for any $q\ge n(p'-1)/2>n(p-1)/2=:q_c$, again by  \cite{BC,W80}.

We  can obtain a sharper result by using  Theorem\,B with  
%\be\label{eq:appuqlog}
 \[\Phi (u)=u^{q_c}\log^r (1+u),\qquad r\ge 1.
\]
Recalling Remark\,\ref{rem:I2I3}(c-d) so that $\ell(y)=f'(y)$ for large enough $y$, we have for such $y$ that
\[
 \ell  (\lam y)\left[\Phi ( y) \right]^{-\pF} \Phi' ( y)  \le  C\lam^{p-1}  y^{-1}\log^m (1+\lam y)\log^{-2r/n} (1+y).  
 \]
Taking   $r>n(m+1)/2$ and $r\ge 1$  ensures that \eqref{eq:LEP0inf} holds and well-posedness follows in the space $M^{\Phi}$ by Theorem\,B. Moreover, since 
$\Phi\in\mathcal{N}$ also satisfies the $\Delta_2$-condition, well-posedness holds in the Orlicz space $L^{\Phi}$ by Remark\,\ref{rem:subspace}(a) (see also Lemma\,\ref{lem:L0M}(iii)).

\vspace{5mm}
\noindent{\bf Acknowledgements.}  RL and KH were supported by a  Daiwa Anglo-Japanese Foundation Award [grant number 14353/15194].  Part of this work was conducted while RL was visiting The Graduate School  of Mathematical Sciences at The University of Tokyo, where KH was then a Research Fellow. RL would like to thank
Prof. Kazuhiro Ishige  and the school for  valuable discussions and their kind hospitality. YF was supported in part by JSPS KAKENHI [grant number 23K03179].

\end{document}